\documentclass[12pt,reqno]{amsart}

\usepackage{XCharter}

\usepackage{amsmath, amssymb, amsthm}
\usepackage{mathtools}        
\usepackage{mathrsfs}         
\usepackage{stmaryrd}         

\usepackage{tikz}
\usetikzlibrary{cd, decorations.pathmorphing, arrows, positioning}

\usepackage[top=1in, bottom=1in, left=2.5cm, right=2.5cm]{geometry}
\usepackage{parskip}

\usepackage{microtype}

\usepackage[shortlabels]{enumitem}  

\usepackage{appendix}
\usepackage{verbatim}
\usepackage{comment}
\usepackage{graphicx}
\usepackage{subfiles}
\usepackage{chngcntr}

\usepackage[colorlinks, pagebackref]{hyperref}
\hypersetup{
  colorlinks = true,
  citecolor  = red,
  linkcolor  = blue,
  urlcolor   = cyan
}

\theoremstyle{definition}
\newtheorem{para}{}[section]

\theoremstyle{plain}
\newtheorem{thm}[para]{Theorem}
\newtheorem{lem}[para]{Lemma}
\newtheorem{cor}[para]{Corollary}

\theoremstyle{definition}

\theoremstyle{remark}
\newtheorem{rem}[para]{Remark}

\newcommand{\bbA}{\mathbb{A}}  
\newcommand{\bbC}{\mathbb{C}}  
  \newcommand{\bbF}{\mathbb{F}}
\newcommand{\bbG}{\mathbb{G}}

  \newcommand{\bbP}{\mathbb{P}}
\newcommand{\bbQ}{\mathbb{Q}}

  \newcommand{\cH}{\mathcal{H}}
  
  \newcommand{\cL}{\mathcal{L}}
\newcommand{\cM}{\mathcal{M}}

\newcommand{\HH}{\mathrm{H}}

\newcommand{\ol}[1]{\overline{#1}}

\newcommand{\Ql}{\bbQ_{\ell}}

\DeclareMathOperator{\Tor}{Tor}

\newcommand{\Db}{D^b_c}
\newcommand{\Gm}{\bbG_m}
\newcommand{\Perv}{\operatorname{Perv}}
\newcommand{\Supp}{\operatorname{Supp}}
\newcommand{\cu}{c_u}
\newcommand{\Cone}{\operatorname{Cone}}
\newcommand{\Eig}{\operatorname{Eig}}

\newif\ifasens
\ifdefined\ASENS\asenstrue\else\asensfalse\fi

\title{Uniform stratified vanishing and equidistribution on \(\Gm^d\)}

\author{Amadou Bah}
\address{Department of Mathematics, University of Notre Dame, Notre Dame, IN 46556, USA}
\email{abah2@nd.edu}

\author{K.~V. Shuddhodan}
\address{Department of Mathematics, University of Notre Dame, Notre Dame, IN 46556, USA}
\email{skadattu@nd.edu}

\subjclass[2020]{Primary 14F20; Secondary 11T23, 14G15, 11L05}
\keywords{Perverse sheaves, Mellin transform, generic vanishing, equidistribution over finite
fields, complexity}

\begin{document}

\begin{abstract}
We prove a uniform stratified generic vanishing theorem for perverse sheaves on \(\Gm^d\) over
finite fields, with constants depending only on the dimension and on the complexity \`a la
Sawin.  As a corollary, we prove an equidistribution theorem for \(\Gm^d\)
extending Katz's theorem for \(\Gm\).

The corollary applies to sequences of perverse sheaves of
bounded complexity with common tannakian monodromy group, over finite fields of
\emph{varying} characteristic.  As the cardinalities of the fields tend to infinity, the Frobenius
conjugacy classes attached to the multiplicative characters equidistribute in the space of
conjugacy classes of a maximal compact subgroup.
\end{abstract}

\maketitle

\ifasens
\begin{center}
\begin{minipage}{0.88\textwidth}
\small
\noindent\textsc{R\'esum\'e}.  Nous d\'emontrons un th\'eor\`eme d'annulation g\'en\'erique stratifi\'ee
uniforme pour les faisceaux pervers sur \(\Gm^d\) au-dessus d'un corps fini, les constantes ne d\'ependant
que de la dimension et de la complexit\'e \`a la Sawin.  Nous en d\'eduisons un th\'eor\`eme
d'\'equir\'epartition sur \(\Gm^d\) qui \'etend le th\'eor\`eme de Katz pour \(\Gm\).

\smallskip
\noindent Ce corollaire s'applique aux suites de faisceaux pervers de complexit\'e born\'ee ayant un
m\^eme groupe de monodromie tannakien, sur des corps finis de caract\'eristique \emph{variable}.
Lorsque les cardinaux des corps tendent vers l'infini, les classes de conjugaison de Frobenius
attach\'ees aux caract\`eres multiplicatifs s'\'equir\'epartissent dans l'espace des classes de
conjugaison d'un sous-groupe compact maximal.
\end{minipage}
\end{center}
\medskip
\fi
\setcounter{tocdepth}{1}
\tableofcontents

\section{Introduction}
\label{sec:statement}

Let \(k=\bbF_q\) be a finite field of characteristic \(p\), let \(\bar k\) be a fixed algebraic closure of
\(k\), let \(\ell\neq p\) be a prime, and let \(\Gm^d/k\) be the split \(d\)-dimensional torus.  We write
\(\Gm^d/\bar k\) for its geometric base change and use unadorned \(\Gm^d\) for this geometric torus.
For \(n\ge1\), \(k_n=\bbF_{q^n}\) denotes the extension of \(k\) of degree \(n\) inside \(\bar k\).

Through the Lang isogeny, a character
\(\chi:\Gm^d(k_n)\to\ol\Ql^\times\) determines a rank-one Kummer local system \(\cL_\chi\) on
\(\Gm^d\).  If \(M_0\) is perverse on \(\Gm^d/k\) and \(M\) is its geometric base change, we set
\(M_\chi=M\otimes\cL_\chi\).  To this we can associate a character sum
\[
        S(M_0;k_n,\chi)=\sum_{x\in \Gm^d(k_n)}t_{M_0}(x;k_n)\,\chi(x),
\]
where \(t_{M_0}\) is the trace function of \(M_0\).  This class includes Gauss sums, Kloosterman sums,
and their multivariable analogues.  By the Grothendieck--Lefschetz trace formula, the sum is the
alternating Frobenius trace on \(R\Gamma_c(\Gm^d,M_\chi)\).  Generic vanishing theorems describe how this
cohomology varies with \(\chi\).  Tannakian methods turn this description into equidistribution theorems
for the sums.

\begin{enumerate}[(a)]

\item Gabber--Loeser proved generic vanishing on tori \cite[Cor.~2.3.2]{GabberLoeser1996}.  Assuming
resolution of singularities and simplification of ideals for varieties of dimension \(<d\), they proved
that the locus where the forget-supports morphism
\(R\Gamma_c(\Gm^d,M_\chi)\to R\Gamma(\Gm^d,M_\chi)\) is \emph{not} an isomorphism is contained in a
finite union of translated algebraic cotori of the character scheme
\(\widehat{\Gm^d}\) \cite[Th.~4.1.1']{GabberLoeser1996}.\footnote{The
\(\ol\Ql\)-scheme of \(\ol\Ql^{\times}\)-characters of \(\Gm^d\), whose \(\ol\Ql\)-points parametrize
rank-one Kummer local systems.  The construction is recalled in \S\ref{sec:characters}.}
Forey--Fres\'an--Kowalski removed these hypotheses \cite[Th.~2.12]{ForeyFresanKowalski2021}.

\item For \(d=1\), Katz developed a tannakian theory on \(\Gm\) and proved equidistribution theorems for
these character sums \cite{Katz2012}.

\item Forey, Fres\'an, and Kowalski \cite{ForeyFresanKowalski2021} extended the tannakian framework to
every connected commutative algebraic group over a finite field.  For a fixed generically unramified,
arithmetically semisimple\footnote{A perverse sheaf on \(\Gm^d/k\) is \emph{arithmetically semisimple}
if it is semisimple as a perverse sheaf over \(k\) \cite[\S1.12]{ForeyFresanKowalski2021}.  On a torus
every arithmetically semisimple object is generically unramified
\cite[Th.~3.27]{ForeyFresanKowalski2021}, and Zurbuchen \cite{Zurbuchen2026} has recently proved generic
unramifiedness for every perverse sheaf on a connected commutative algebraic group.} object \(M_0\) pure
of weight zero,
they proved that the classes
\(\Theta_{M_0,k_n}(\chi)\) attached to the unramified characters equidistribute on average as
\(n\to\infty\) \cite[Th.~4.11]{ForeyFresanKowalski2021}.

\end{enumerate}

In the equidistribution results above, the object is fixed and the field varies through its finite
extensions.  This article concerns equidistribution along arbitrary sequences of finite fields of
growing cardinality, allowing the characteristic to vary.

Katz proved such a theorem in dimension one, for forms of \(\Gm\)
\cite[Th.~28.1]{Katz2012} (see also the Bourbaki seminar of Fres\'an \cite{FresanBourbaki2019}).  For
arbitrary powers of the additive group, an equidistribution theorem over prime fields follows from
the result for lisse sheaves of \cite[Th.~7.22]{SFFK23}, see
\cite[Rem.~4.20(1)]{ForeyFresanKowalski2021}.  Forey--Fres\'an--Kowalski developed the
corresponding theory for multiplicative characters of connected commutative algebraic groups, including
their horizontal equidistribution theorem \cite[Th.~4.19]{ForeyFresanKowalski2021}.  For tori of
dimension at least two, this theorem was left open
\cite[Rem.~4.20(2)]{ForeyFresanKowalski2021}.  In the present article we prove it for \(\Gm^d\) in every
dimension.

\subsection{Statements}
\label{sec:statements}

\subsubsection{Uniform stratified generic vanishing}

We fix notation.  For \(K\in\Db(\Gm^d,\ol\Ql)\), \(\cu(K)\) denotes Sawin's complexity
relative to the embedding \(u_d\) fixed in \S\ref{sec:embeddings} and recalled in
\S\ref{sec:complexity}.  We denote by \(\widehat{\Gm^d}\) the Gabber--Loeser scheme of
\(\ol\Ql^{\times}\)-characters of \(\Gm^d\) (\S\ref{sec:characters}).  Its
\(\ol\Ql\)-points parametrize the Kummer sheaves \(\cL_\chi\), and we put \(K_\chi=K\otimes\cL_\chi\).
Applied to the structure morphism of \(\Gm^d\) and the complex \(K_\chi\), the transformation
\eqref{eq:forget-supports} gives the morphism
\[
        R\Gamma_c(\Gm^d,K_\chi)
        \xrightarrow{\varphi_{K,\chi}}
        R\Gamma(\Gm^d,K_\chi).
\]
Following Gabber--Loeser, let \(\Delta(K)\subseteq\widehat{\Gm^d}\) be the support of the cone of the
comparison morphism between the two Mellin transforms of \(K\), as recalled in \ref{item:GL2} of
\S\ref{sec:mellin}.  By \cite[Prop.~3.3.4(i)]{GabberLoeser1996},
\[
        \Delta(K)(\ol\Ql)
        =\{\chi\in\widehat{\Gm^d}(\ol\Ql):\varphi_{K,\chi}\text{ is \emph{not} an isomorphism}\}.
\]
For \(n\ge1\) we write
\(\widehat{\Gm^d(k_n)}=\operatorname{Hom}\bigl(\Gm^d(k_n),\ol\Ql^{\times}\bigr)\), and for a subset
\(Z\subseteq\widehat{\Gm^d}(\ol\Ql)\) we put \(Z(k_n)=Z\cap\widehat{\Gm^d(k_n)}\).

If \(A\in\Perv(\Gm^d,\ol\Ql)\), let \(X_w(A)\subseteq\widehat{\Gm^d}(\ol\Ql)\) be the weakly unramified
locus, defined by the conditions of \cite[Def.~3.1]{ForeyFresanKowalski2021}.\footnote{For the perverse sheaf \(A_\chi\) on the
affine torus, Artin vanishing and its Verdier-dual form \cite[Th.~4.1.1, Cor.~4.1.2]{BBDG18} show that
\(\varphi_{A,\chi}\) is an isomorphism exactly when the conditions of loc.~cit. hold.  Thus
\(\Delta(A)(\ol\Ql)\) is the complement of the weakly unramified locus.}
\begin{equation}
\label{eq:weak-locus}
 X_w(A)=\widehat{\Gm^d}(\ol\Ql)\setminus\Delta(A)(\ol\Ql).
\end{equation}

\begin{thm}[uniform point count for \(\Delta(K)\)]
\label{thm:delta-count}
For every \(d\ge1\) there is a nondecreasing function
\(\Phi^\Delta_{d}:\mathbb R_{\ge0}\to\mathbb R_{\ge0}\) with the following property.  For every
\(C\ge0\), every finite field \(k=\bbF_q\), every prime \(\ell\neq\operatorname{char}k\), every
\(K\in\Db(\Gm^d,\ol\Ql)\) with \(\cu(K)\le C\), and every \(n\ge1\),
\[
        |\Delta(K)(k_n)|\le \Phi^\Delta_{d}(C)\,q^{n(d-1)} .
\]
\end{thm}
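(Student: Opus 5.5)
We plan to prove Theorem~\ref{thm:delta-count} by induction on \(d\), combining the Gabber--Loeser description of \(\Delta(K)\) as a finite union of translated cotori with a uniform bound, in terms of \(\cu(K)\), on the \emph{number} of cotori and on the \emph{torsion} of their translating characters. A translated proper cotorus \(\chi_0\cdot\widehat{T'}\), where \(\Gm^d\to T'\) is a quotient torus of dimension \(\le d-1\), has at most \(|T'(k_n)|\cdot c\le c\,q^{n(d-1)}\) points of order dividing \(q^n-1\) (intersecting with \(\widehat{\Gm^d(k_n)}\)), where \(c\) bounds the number of components of the relevant finite kernel. So it suffices to show: \(\Delta(K)(\ol\Ql)\) is contained in a union of at most \(N_d(C)\) translated cotori of dimension \(\le d-1\), each of the form \(\chi_0\widehat{T'}\) with \(T'\) the quotient by a subtorus defined by a primitive cocharacter/character of height bounded by \(H_d(C)\). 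Then each contributes \(\le O_{H}(q^{n(d-1)})\) points and summing gives \(\Phi^\Delta_d(C)\).

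First, reduce to \(K\) perverse: \(\Delta(K)\subseteq\bigcup_i\Delta({}^pH^i K)\) by the triangle and the long exact sequence for the cones, and Sawin's complexity bounds both the number of nonzero perverse cohomologies and their complexities in terms of \(\cu(K)\). Second, for perverse \(A\) the complement \(\Delta(A)\) is where \(\varphi_{A,\chi}\) fails; by \cite{GabberLoeser1996} and \cite{ForeyFresanKowalski2021} this is controlled by the behaviour of \(A\) ``at infinity'' along one-parameter directions. Concretely, we would choose a toric compactification/ fan adapted to \(A\): the failure of \(\varphi\) arises from boundary divisors \(D_\sigma\) (rays \(\sigma\) of the fan) where \(j_!A_\chi\to Rj_*A_\chi\) fails, which forces the restriction of \(\chi\) to the cocharacter \(\sigma\) to lie in the finite set of local monodromy eigenvalues of \(A\) around \(D_\sigma\). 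This gives the cotori \(\{\chi:\chi\circ\sigma\in \Eig_\sigma\}\), each of codimension one, indexed by rays and eigenvalues.

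Third, the uniformity: we must bound the number of relevant rays, their heights, and the number of eigenvalues by a function of \(C\) and \(d\). Here we use Sawin's complexity: pushforward along the coordinate projections and restriction to generic lines have complexity bounded by \(\cu(K)\); the rank of \(A\) on generic curves, hence the number of eigenvalues, is bounded by complexity. For the rays, we plan to use the uniform bound on the Newton polytope-type data: the singular support/characteristic cycle degree is bounded in terms of complexity, so the directions where \(A\) is not ``tame and trivial at infinity'' are normals to faces of a polytope of bounded size, giving finitely many primitive vectors of bounded height. Where the ray analysis is insufficient (higher-codimension components, wild ramification), we use the inductive hypothesis: the restriction/pushforward of \(A_\chi\) to the quotient \(\Gm^{d-1}\) along a bounded-height cocharacter has bounded complexity, and a character \(\chi\) in \(\Delta(A)\) restricting generically along the fibers lies in the pullback of \(\Delta\) of that pushforward, whose count is \(\le\Phi^\Delta_{d-1}\,q^{n(d-2)}\) times the fiber count \(q^n\).

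The main obstacle is the third step: producing the bounded-height fan uniformly in the characteristic, since Gabber--Loeser's argument is non-effective (uses resolution). We would first try to avoid fans altogether: fiber \(\Gm^d\to\Gm^{d-1}\) by a projection, apply the base-change/Künneth formula so that \(\varphi_{A,\chi}\) is an isomorphism whenever the relative forget-supports map along the \(\Gm\)-fibers is one over an open set and the pushforward's \(\varphi\) is one, and use Katz's one-dimensional theory (the bad characters along \(\Gm\) are the at most \(\cu\)-many tame eigenvalues at \(0,\infty\)) together with uniform complexity bounds for \(Rf_!\), \(Rf_*\) and cones.
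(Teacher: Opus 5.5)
\textbf{There is a genuine gap in the main route, and the fallback route is missing its key lemma.}

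\textbf{The main route.} The gap is at exactly the step you flag as the main obstacle. No available result bounds the number and heights of the translated cotori covering \(\Delta(K)\) uniformly in the characteristic and in terms of the complexity alone. The cotorus description comes from a normal-crossings compactification adapted to the sheaf: Gabber--Loeser obtain it under resolution hypotheses, Forey--Fres\'an--Kowalski via de Jong alterations. Neither controls the number or combinatorics of the boundary components. This is precisely the obstruction recorded in \cite[Rem.~2.4]{ForeyFresanKowalski2021}. Your claim that a complexity bound on characteristic-cycle data forces the bad directions at infinity to be normals to faces of a polytope of bounded size is not a known theorem. It is also unclear how it would handle wild ramification along the boundary. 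So your second and third steps, as written, do not prove the statement. (The reduction to perverse cohomology sheaves is fine and matches the paper.)

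\textbf{The fallback route.} Your fallback in the last paragraph has the right architecture; it is what the paper does. However, one condition is misstated and the decisive ingredient is missing.

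First, the condition. Composition and the projection formula identify \(\varphi_{A,(\lambda,\eta)}\) with \(\varphi_{B_\lambda,\eta}\) only when \(Rp_!(A\otimes q^*\cL_\lambda)\to Rp_*(A\otimes q^*\cL_\lambda)\) is an isomorphism on all of \(\Gm^{d-1}\). An isomorphism over a dense open set is not enough. If the cone is supported at a point \(y\), then \(R\Gamma_c\) of its \(\eta\)-twist is the stalk at \(y\), nonzero for every \(\eta\). You then have no control over \(\varphi_{A,(\lambda,\eta)}\) for any \(\eta\), and if this happens for every \(\lambda\) the count is only \(O(q^{nd})\).

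Second, the missing ingredient. Everything rests on a uniform bound for the number of \(\lambda\) for which the relative map fails somewhere on the base. Katz's one-variable theory applied fibre by fibre does not give this, for two reasons. \(Rp_*\) and \(Rj_*\) do not commute with restriction to fibres, so the fibrewise forget-supports map does not detect the failure. And even if they did, finitely many bad \(\lambda\) per fibre over infinitely many fibres gives no bound.

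\textbf{How the paper supplies it (Lemma~\ref{lem:exceptional}).}
\begin{enumerate}
\item Compactifying to \(\bbP^1\times\Gm^{d-1}\), the cone of the relative map is \(i_0^*Rj_*A_\lambda\oplus i_\infty^*Rj_*A_\lambda\).
\item Its stalks are computed by a Hochschild--Serre spectral sequence whose \(E_2\)-terms are continuous \(I_x\)-cohomology of the stalks of \(\cH^b(R\Psi_xA_\lambda)\). Here \(R\Psi_x\) denotes relative nearby cycles on the base.
\item Twisting by \(\lambda\) multiplies the tame inertia action by \(\lambda_x\). By prime-to-\(\ell\) vanishing and the \(\mathbb Z_\ell\)-cohomology computation, a nonzero stalk forces \(\lambda_x(\tau_x)^{-1}\) to be an eigenvalue of a lift of \(\tau_x\) on some stalk of some \(\cH^b(R\Psi_xA)\).
\item The map \(\lambda\mapsto\lambda_x(\tau_x)\) is injective.
\item The number of such eigenvalues over all points of the base is bounded. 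This uses Sawin's nearby-cycle estimate for the complexity of \(R\Psi_xA\), together with a lissity stratification having boundedly many strata of bounded rank.
\end{enumerate}

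With that lemma, the rest of your count goes through as in the paper. Each exceptional \(\lambda\) contributes at most \(q^{n(d-1)}\) trivially. For the remaining \(\lambda\), you apply induction to \(B_\lambda\), whose complexity is bounded.
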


For \(A\in\Perv(\Gm^d,\ol\Ql)\) and \(0\le i\le d\), put
\begin{equation}
\label{eq:cohomology-jump-loci}
\begin{aligned}
 \mathcal V_{d,i}(A)=\bigl\{\chi\in\widehat{\Gm^d}(\ol\Ql):{}&
 \HH^{i}_c(\Gm^d,A_\chi)\neq0\ \text{or}\ \HH^{-i}_c(\Gm^d,A_\chi)\neq0\ \text{or}\\
 &\HH^{i}(\Gm^d,A_\chi)\neq0\ \text{or}\ \HH^{-i}(\Gm^d,A_\chi)\neq0\bigr\}.
\end{aligned}
\end{equation}
For \(i>d\), we use the convention \(\mathcal V_{d,i}(A)=\varnothing\).

\begin{thm}[bounded-complexity stratified point count]
\label{thm:stratified}
For every \(d\ge1\) there is a nondecreasing function
\(\Psi_{d}:\mathbb R_{\ge0}\to\mathbb R_{\ge0}\) such that for every \(C\ge0\), every finite field
\(k=\bbF_q\), every prime \(\ell\neq\operatorname{char}k\), every \(A\in\Perv(\Gm^d,\ol\Ql)\) with
\(\cu(A)\le C\), every \(0\le i\le d\), and every \(n\ge1\),
\[
        |\mathcal V_{d,i}(A)(k_n)|\le\Psi_{d}(C)\,q^{n(d-i)} .
\]
\end{thm}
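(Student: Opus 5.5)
We argue by induction on \(d\), with \(i\) as an inner parameter. The case \(i=0\) is trivial: all characters number \((q^n-1)^d\le q^{nd}\), so \(\Psi_d(C)\ge1\) suffices. For \(i\ge1\) we first reduce to the non-isomorphism locus. If \(\chi\notin\Delta(A)(\ol\Ql)\), then \(\varphi_{A,\chi}\) is an isomorphism. Artin vanishing gives \(\HH^j(\Gm^d,A_\chi)=0\) for \(j>0\), and its dual form gives \(\HH^j_c(\Gm^d,A_\chi)=0\) for \(j<0\). Hence \(R\Gamma_c=R\Gamma\) is concentrated in degree \(0\), and \(\chi\notin\mathcal V_{d,i}(A)\). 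So \(\mathcal V_{d,i}(A)\subseteq\Delta(A)\) for \(i\ge1\), and Theorem~\ref{thm:delta-count} already settles \(i=1\).

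For \(i\ge2\) we need a finer stratification of \(\Delta(A)\) that stays uniform in \(q\) and \(\ell\). Following Gabber--Loeser, we expect \(\Delta(A)\) to lie in a finite union of translated cotori \(\chi_0\cdot\widehat{T}\). Here \(\widehat{T}\) is the character group of a quotient torus \(\pi:\Gm^d\to T=\Gm^{d-e}\) with \(e\ge1\), so \(\dim\widehat T=d-e\). The plan is to make this uniform in the following sense.

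\begin{enumerate}[(1)]
\item The number of cotori and the "degrees" of the quotient maps \(\pi\) (entries of the integer matrices) are bounded in terms of \(d\) and \(\cu(A)\). This should come from the same argument that proves Theorem~\ref{thm:delta-count}, which presumably covers \(\Delta(K)(k_n)\) by boundedly many cotori of codimension \(\ge1\).
\item On a cotorus \(\chi_0\cdot\pi^*\widehat{T}\), we use the projection formula:
\[
R\Gamma_c(\Gm^d,A_{\chi_0\pi^*\eta})=R\Gamma_c\bigl(T,(R\pi_!A_{\chi_0})\otimes\cL_\eta\bigr),
\]
and similarly for \(R\Gamma\) with \(R\pi_*\). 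Since \(\pi\) is smooth affine of relative dimension \(e\), the complex \(R\pi_!A_{\chi_0}\) has perverse amplitude in \([-e,0]\) by the relative Artin theorem, and \(R\pi_*A_{\chi_0}\) has amplitude in \([0,e]\).
\item Sawin's complexity is bounded under pushforward, tensor by \(\cL_{\chi_0}\), and taking perverse cohomology, with bounds depending only on \(d\) and the degrees from (1). Hence each perverse constituent \(B={}^p\mathcal H^j(R\pi_!A_{\chi_0})\) is a perverse sheaf on \(\Gm^{d-e}\) of bounded complexity.
\end{enumerate}

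Now run the hypercohomology spectral sequence. Nonvanishing of \(\HH^{\pm i}\) or \(\HH_c^{\pm i}\) of \(A_\chi\) forces \(\HH^{\pm i'}_{(c)}(T,B_\eta)\neq0\) for some constituent \(B\) and some \(i'\ge i-e\). By induction on \(d\) (the case \(i'>d-e\) being empty), the number of \(\eta\in\widehat{T}(k_n)\) with this property is at most \(\Psi_{d-e}(C')\,q^{n(d-e-i')}\le\Psi_{d-e}(C')\,q^{n(d-i)}\). Summing over the boundedly many constituents \(B\) and cotori gives the bound. We must also count the \(k_n\)-rational points of each translated cotorus correctly: the map \(\eta\mapsto\chi_0\pi^*\eta\) has kernel of bounded order when \(\pi\) has bounded degree, which only costs a constant. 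When \(\chi_0\) is not rational, the rational points either form a coset or are empty.

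The main obstacle is step (1): a uniform cotorus covering of \(\Delta(A)\) with bounded data, independent of \(p\) and \(\ell\). I would try to extract it from the proof of Theorem~\ref{thm:delta-count}, expecting that proof to proceed by generic base change along coordinate projections with bounded complexity. If it only yields point counts rather than cotori, I would instead redo the induction directly. Namely, I would stratify by the coordinate projections \(\Gm^d\to\Gm^{d-1}\) after a bounded-complexity change of coordinates and apply the \(i-1\) case in dimension \(d-1\). This loses one unit of \(i\) per codimension, which matches the exponent \(d-i\).
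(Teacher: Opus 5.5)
Your reduction $\mathcal V_{d,i}(A)\subseteq\Delta(A)(\ol\Ql)$ for $i\ge1$ is fine, and so is your treatment of $i=0,1$. The argument for $i\ge2$, however, has a genuine gap.

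Your step (1) asks for a covering of $\Delta(A)$ by translated cotori whose number is bounded in terms of $d$ and $\cu(A)$, with quotient maps of bounded degree, uniformly in $p$ and $\ell$. This cannot be extracted from Theorem~\ref{thm:delta-count}, because that proof produces no cotori. It fibres over the first coordinate and uses $\{\eta:(\lambda,\eta)\in\Delta(A)(\ol\Ql)\}=\Delta(B_\lambda)(\ol\Ql)$ for non-exceptional $\lambda$ (Lemma~\ref{lem:fibre}). Here the perverse sheaf $B_\lambda$ on $\Gm^{d-1}$ changes with $\lambda$, so $\Delta(A)(k_n)$ is controlled as a union of roughly $q^n$ unrelated fibres. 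A uniform cotorus theorem is exactly what is not available. The known cotorus descriptions go through de Jong's alterations, which give no control on the number of exceptional components. Making them effective would need an effective alteration or embedded resolution theorem, and the paper's method is designed to bypass this. Separately, in step (2) you have the amplitudes reversed. For affine $\pi$ of relative dimension $e$ one has $R\pi_!F\in{}^pD^{[0,e]}$ and $R\pi_*F\in{}^pD^{[-e,0]}$ (cf. Lemma~\ref{lem:relative-amplitude}). Your bound $i'\ge i-e$ happens to survive this.

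Your fallback is too vague to close the gap, and read literally it gives the wrong exponent. If you apply the $(i-1)$ case in dimension $d-1$ to every $\lambda\in\widehat{\Gm(k_n)}$, you get $q^n\cdot q^{n(d-i)}$, a factor $q^n$ too large. The missing idea is a dichotomy in the first coordinate. Let $E_{\Gm}(A)$ be the set of $\lambda$ for which $Rp_!(A\otimes q^*\cL_\lambda)\to Rp_*(A\otimes q^*\cL_\lambda)$ is not an isomorphism.

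For $\lambda\notin E_{\Gm}(A)$, Artin vanishing on both sides makes $B_\lambda$ perverse, with complexity bounded in terms of $\cu(A)$. The projection formula then gives $\{\eta:(\lambda,\eta)\in\mathcal V_{d,i}(A)\}=\mathcal V_{d-1,i}(B_\lambda)$ with the \emph{same} $i$, and this absorbs the factor $q^n$ coming from the choice of $\lambda$. Only for $\lambda\in E_{\Gm}(A)$ does one drop to degree $i$ or $i-1$. This uses the perverse amplitudes $[0,1]$ and $[-1,0]$ and the at most four perverse cohomology sheaves of $B_{!,\lambda}$ and $B_{*,\lambda}$.

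The scheme works only if $\#E_{\Gm}(A)$ is bounded by a function of $d$ and $\cu(A)$ alone, uniformly in $p$ and $\ell$. That bound is the heart of the matter, and your proposal does not contain it (Lemma~\ref{lem:exceptional}). It is proved in four steps:
\begin{enumerate}
\item Reduce, via the localization triangle on $\bbP^1\times\Gm^{d-1}$, to nonvanishing of the boundary stalks of $Rj_*(A\otimes q^*\cL_\lambda)$ at $x\in\{0,\infty\}$.
\item Compute these stalks by the inertia Hochschild--Serre spectral sequence for the relative nearby cycles $R\Psi_xA$.
\item Observe that twisting by $\cL_\lambda$ multiplies the tame monodromy by $\lambda_x(\tau_x)$, so $\lambda_x(\tau_x)^{-1}$ must be an eigenvalue of $\widetilde\tau_x$ on some stalk of $R\Psi_xA$.
\item Bound the number of such eigenvalues through the complexity of $R\Psi_xA$ and a lissity stratification, and conclude using injectivity of $\lambda\mapsto\lambda_x(\tau_x)$.
\end{enumerate}
No change of coordinates is needed: the bound holds for the fixed coordinate projection and every perverse $A$.
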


Forey--Fres\'an--Kowalski propose this bounded-complexity refinement of their stratified
vanishing theorem \cite[Th.~2.3, Rem.~2.4]{ForeyFresanKowalski2021}.\footnote{For tori, the
non-uniform stratified vanishing theorem is
\cite[Th.~3.4.1, Prop.~6.3.2, Th.~6.3.3]{GabberLoeser1996}, again conditional on the hypotheses
recalled above, removed in \cite[Cor.~2.16]{ForeyFresanKowalski2021}.}  Its case \(i=1\) bounds
\(\mathcal V_{d,1}(A)\), a subset of
\(\Delta(A)(\ol\Ql)\), and is subsumed by Theorem~\ref{thm:delta-count}, which bounds the larger set
\(\Delta(A)(\ol\Ql)\).

\subsubsection{Equidistribution}
\label{sec:equidistribution}

The remaining statements use the tannakian framework of \cite{ForeyFresanKowalski2021}.  The categories
\(\Perv_{\mathrm{int}}(\Gm^d)\), \(\Perv_{\mathrm{int}}^{\mathrm{ari}}(\Gm^d/k)\), and the tannakian
subcategory \(\langle M\rangle\) of \((\Perv_{\mathrm{int}}(\Gm^d),\ast_{\mathrm{int}})\) generated by an
object \(M\) are recalled in \S\S\ref{sec:negligible}--\ref{sec:tannakian}.

For \(M_0\in\Perv_{\mathrm{int}}^{\mathrm{ari}}(\Gm^d/k)\), with geometric base change
\(M=M_{0,\bar k}\), let \(X(M_0)=X(M)\subseteq X_w(M)\) denote the unramified locus of
\cite[Def.~3.25, \S3.8]{ForeyFresanKowalski2021}.  Its definition in terms of fibre functors on
\(\langle M\rangle\) is recalled in \S\ref{sec:tannakian}.
Following \cite[Def.~1.22]{ForeyFresanKowalski2021}, a subset
\(X\subseteq\widehat{\Gm^d}(\ol\Ql)\) is \emph{generic} if there exists a constant \(c(X)\ge0\) such that
\[
        |(\widehat{\Gm^d}(\ol\Ql)\setminus X)(k_n)|\le c(X)q^{n(d-1)}
\]
for every \(n\ge1\).  The object \(M_0\) is \emph{generically unramified} if \(X(M_0)\) is generic.

\begin{thm}[unramified point count]
\label{thm:unram-count}
For every \(d\ge1\) there is a nondecreasing function
\(\Phi^{\mathrm{ur}}_{d}:\mathbb R_{\ge0}\to\mathbb R_{\ge0}\) with the following property.  For
every \(C\ge0\), every finite field \(k=\bbF_q\), every prime
\(\ell\neq\operatorname{char}k\), every arithmetically semisimple object
\(M_0\in\Perv_{\mathrm{int}}^{\mathrm{ari}}(\Gm^d/k)\) whose geometric base change
\(M=M_{0,\bar k}\) satisfies \(\cu(M)\le C\), and every \(n\ge1\),
\[
        |(\widehat{\Gm^d}(\ol\Ql)\setminus X(M_0))(k_n)|
        \le \Phi^{\mathrm{ur}}_{d}(C)\,q^{n(d-1)} .
\]
\end{thm}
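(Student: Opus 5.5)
We plan to reduce Theorem~\ref{thm:unram-count} to Theorem~\ref{thm:delta-count} by showing that the ramified locus \(\widehat{\Gm^d}(\ol\Ql)\setminus X(M_0)\) is contained in a finite union of loci \(\Delta(K_j)\), where the \(K_j\) are finitely many complexes built from \(M\) whose number and complexities are bounded in terms of \(d\) and \(C\) alone.

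We first recall the description of \(X(M_0)\) in \S\ref{sec:tannakian}, following \cite[\S3]{ForeyFresanKowalski2021}. The category \(\langle M\rangle\) has monodromy group \(G\), and its semisimple objects are direct summands of the tensor powers \(M^{\ast a}\ast(M^\vee)^{\ast b}\). A character \(\chi\) is unramified when \(\chi\in X_w(N)\) for every object \(N\) of \(\langle M\rangle\), so that \(N\mapsto\HH^0(\Gm^d,N_\chi)\) is a fibre functor. By the reconstruction argument of loc.~cit. (Chevalley's theorem), it suffices to test finitely many objects: \(G\) is cut out inside \(\mathrm{GL}(\omega(M))\) by invariants in some \(\bigoplus_{a+b\le r}\omega(M)^{\otimes a}\otimes\omega(M)^{\vee\otimes b}\). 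Hence
\[
\widehat{\Gm^d}(\ol\Ql)\setminus X(M_0)\subseteq\bigcup_{a+b\le r}\Delta\bigl(M^{\ast_! a}\ast_!(M^\vee)^{\ast_! b}\bigr)(\ol\Ql).
\]
Using \(!\)-convolution, and checking that the Mellin transform of \(K\ast_!L\) is the tensor product of the Mellin transforms, gives \(\Delta(K\ast_!L)\subseteq\Delta(K)\cup\Delta(L)\) up to the negligible part. Since \(M\) is arithmetically semisimple, passing to the quotient by negligibles does not change cohomology away from \(\Delta(M)\cup\Delta(M^\vee)\). This gives the inclusion for a suitable bound \(r\).

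The main obstacle is uniformity of \(r\). Chevalley's theorem gives some \(r\) depending on \(G\), and \(G\) varies with \(M_0\). We would first try to bound \(r\) in terms of \(\dim\omega(M)=\chi(\Gm^d,M)\le\cu(M)\le C\) only. There are only finitely many possibilities for the rank, and reductive subgroups of \(\mathrm{GL}_N\) are defined by invariants of bounded degree: the identity component is handled via a bounded Lie-algebra description, and the component group \(\pi_0\) is bounded by Jordan-type arguments in terms of \(N\). If this fails, we would avoid \(G\) altogether and use the characterization of \cite[Th.~3.27]{ForeyFresanKowalski2021}, under which \(\chi\) is unramified iff \(\chi\in X_w\) of every simple constituent of \(M^{\ast a}\ast M^{\vee\ast b}\) for \(a+b\le 2\). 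For this we would need to verify that the proof there uses only a bounded number of tensor operations.

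Granting the inclusion, we then bound complexity. By Sawin's estimates (\S\ref{sec:complexity}), \(\cu(K\ast_!L)\le c_d\,\cu(K)\cu(L)\) and \(\cu(M^\vee)=\cu(M)\). Hence each \(K_j\) has \(\cu(K_j)\le (c_dC)^r\). Applying Theorem~\ref{thm:delta-count} to each \(K_j\) and summing over the at most \((r+1)^2\) pairs \((a,b)\), we may set
\[
\Phi^{\mathrm{ur}}_d(C)=(r+1)^2\,\Phi^\Delta_d\bigl((c_dC)^r\bigr).
\]
This function is nondecreasing since \(\Phi^\Delta_d\) is, which completes the plan.
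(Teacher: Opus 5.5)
Your plan has a genuine gap, and it sits in the step you treat as routine: ``since \(M\) is arithmetically semisimple, passing to the quotient by negligibles does not change cohomology away from \(\Delta(M)\cup\Delta(M^\vee)\).'' Arithmetic semisimplicity does not give this. Beyond Theorem~\ref{thm:delta-count}, this step is the whole content of the result.

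Requiring \(\chi\in X_w(N)\) for every \(N\in\langle M\rangle\) does not by itself make \(\omega_\chi\) a tensor functor. The complex \(N\ast_!N'\) differs from \(N\ast_{\mathrm{int}}N'\) by negligible perverse sheaves. These are not objects of \(\langle M\rangle\), and one must show that their cohomology at \(\chi\) vanishes. This is why \cite[Lem.~3.28]{ForeyFresanKowalski2021} requires \(\chi\in\mathcal N(C_m)\) for every \(m\ge2\). That is a condition on each perverse cohomology sheaf of each cone \(C_m=\Cone(L^{\ast_!m}\to L^{\ast_*m})\), and information about \(\Delta\) of a complex does not pass to its perverse cohomology sheaves for free.

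The paper supplies exactly this step in Lemma~\ref{lem:weakly-unramified-inclusion}, with \(L=M\oplus M^\vee\):
\begin{enumerate}[(1)]
\item Lemma~\ref{lem:comparison-monoidal} shows that the Mellin comparison \(\gamma_m\) attached to \(\alpha_m\) has derived fibre \(\varphi_{L,\chi}^{\otimes m}\).
\item A Nakayama argument then puts \(\Supp\Cone(\gamma_m)\) inside \(\Delta(L)\).
\item \ref{item:GL5} kills \(C_m\) in \(\Db(\Gm^d)_{\Delta(L)}\).
\item \(t\)-exactness of the localization kills every \({}^pH^j(C_m)\) there.
\end{enumerate}
This gives \(\chi\in\mathcal N(C_m)\) for all \(m\) at once whenever \(\chi\notin\Delta(L)\), hence \(X_w(L)\subseteq X(M_0)\). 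Theorem~\ref{thm:delta-count}, applied to the single object \(L\) with \(\cu(L)\le F_d(\cu(M))\), then concludes.

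The obstacle you single out, uniformity of \(r\), is illusory, and your proposed fix for it is false. Your own inclusion \(\Delta(K\ast_!K')\subseteq\Delta(K)\cup\Delta(K')\) does hold. It needs \ref{item:GL5} applied to \(c_{K,K'}\), because \(\Delta(K\ast_!K')\) involves \(\mathscr M_*(K\ast_!K')\), which monoidality alone does not compute. Once you have it, your union over \(a+b\le r\) collapses to \(\Delta(M\oplus M^\vee)\). So you need neither a bound on \(r\) nor complexity bounds for convolution powers.

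A uniform bound on \(r\) in terms of \(C\) cannot exist anyway. Take \(n\mid q-1\) and \(x\in k^\times\) of exact order \(n\). The skyscraper \(\delta_x\) is a simple object of \(\Perv_{\mathrm{int}}^{\mathrm{ari}}(\Gm/k)\) with complexity bounded independently of \(n\), and its tannakian group is \(\mu_n\subset\operatorname{GL}_1\). Every \(\mu_n\)-invariant in \(\omega^{\otimes a}\otimes(\omega^\vee)^{\otimes b}\) with \(a+b<n\) is already \(\operatorname{GL}_1\)-invariant, so \(r\ge n\). Jordan-type theorems bound an index, not \(|\pi_0|\).

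Finally, this paper cites \cite[Th.~3.27]{ForeyFresanKowalski2021} as the generic unramifiedness theorem on tori. It is not a criterion testing only objects with \(a+b\le2\), so your second fallback has no source.
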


Fix a prime \(\ell\) and an isomorphism \(\iota\colon\ol\Ql\to\bbC\), through which the unitary
Frobenius conjugacy classes below are formed, as in \cite[\S3.9]{ForeyFresanKowalski2021}.  Their
construction is recalled in \ref{item:T5} of \S\ref{sec:tannakian}.  For an integer
\(r\ge1\) and a reductive subgroup \(G\subseteq\operatorname{GL}_{r,\ol\Ql}\), let \(K_G\) be a maximal
compact subgroup of \(G(\bbC)\), let \(K_G^\sharp\) be its space of conjugacy classes, and let
\(\mu_G^\sharp\) be the pushforward of the normalized Haar probability measure on \(K_G\).

Let \((k_j)_{j\ge1}\) be a sequence of finite fields of characteristic \(\neq\ell\) with \(\#k_j\to\infty\).
For each \(j\), choose an algebraic closure \(\bar k_j\) of \(k_j\), and let
\(M_{j,0}\in\Perv_{\mathrm{int}}^{\mathrm{ari}}(\Gm^d/k_j)\) be arithmetically semisimple and pure of
weight zero, with geometric base change \(M_j=M_{j,0,\bar k_j}\).  Let \(r_j\) be the tannakian dimension
of \(M_j\), and put
\[
 G_j^{\mathrm{geo}}=G^{\mathrm{geo}}_{M_j},\qquad
 G_j^{\mathrm{ari}}=G^{\mathrm{ari}}_{M_j},\qquad
 X_j=X(M_{j,0})(k_j).
\]
Thus \(G_j^{\mathrm{geo}}\subseteq G_j^{\mathrm{ari}}\subseteq\operatorname{GL}_{r_j,\ol\Ql}\) are the
geometric and arithmetic tannakian groups of \(M_j\) \cite[Ch.~3]{ForeyFresanKowalski2021}.  They are
reductive by \ref{item:T2} in \S\ref{sec:tannakian}.

\begin{cor}
\label{cor:equidistribution}
Assume that \(\sup_j\cu(M_j)<\infty\), that \(r_j=r\) for some integer \(r\), and that
\(G_j^{\mathrm{ari}}=G_j^{\mathrm{geo}}\) for every \(j\).  Suppose that these groups are conjugate in
\(\operatorname{GL}_{r,\ol\Ql}\) to a fixed reductive subgroup \(G\).  After choosing such conjugacies,
the unitary Frobenius conjugacy classes \(\Theta_{M_{j,0},k_j}(\chi)\), \(\chi\in X_j\), become
\(\mu_G^\sharp\)-equidistributed in \(K_G^\sharp\) as \(j\to\infty\).
\end{cor}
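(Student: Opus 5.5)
We follow Katz's Weyl-criterion route \cite[Th.~28.1]{Katz2012}, with Theorem~\ref{thm:unram-count} supplying the uniformity in the characteristic. It suffices to show that for every nontrivial irreducible representation \(\Lambda\) of \(G\) (equivalently of \(K_G\)),
\[
 \frac{1}{|X_j|}\sum_{\chi\in X_j}\operatorname{tr}\Lambda\bigl(\Theta_{M_{j,0},k_j}(\chi)\bigr)\longrightarrow0 .
\]
Via the chosen conjugacy, \(\Lambda\) corresponds to an object \(\Lambda(M_{j,0})\) of the arithmetic tannakian category \(\langle M_{j,0}\rangle\). It is cut out from some \(M_{j,0}^{\ast a}\ast (M_{j,0}^\vee)^{\ast b}\), with \(a,b\) depending only on \(\Lambda\) and \(G\). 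It is pure of weight zero and arithmetically semisimple. Since \(G^{\mathrm{ari}}_j=G^{\mathrm{geo}}_j\cong G\) and \(\Lambda\) is nontrivial irreducible, it has no geometrically trivial constituent. By the tannakian dictionary \ref{item:T5}, for \(\chi\in X_j\) the quantity \(\operatorname{tr}\Lambda(\Theta(\chi))\) equals the Frobenius trace on \(\HH^0_c(\Gm^d,\Lambda(M_j)_\chi)\), normalized to weight zero.

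First step: bound complexity. Sawin's complexity is controlled under the operations building \(\Lambda(M_j)\): middle convolution (a \(\ast_!\) followed by taking \({}^p\HH^0\) and removing negligible pieces), duals, and direct summands. We therefore get \(\cu(\Lambda(M_j))\le C'\), with \(C'\) depending only on \(d\), \(\sup_j\cu(M_j)\) and \((a,b)\). Also \(\cu\) bounds all Betti numbers \(\dim\HH^\ast_c(\Gm^d,N_\chi)\) uniformly in \(\chi\). Both facts are part of Sawin's formalism recalled in \S\ref{sec:complexity}.

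Second step: trace-formula evaluation. By orthogonality of characters,
\[
 \sum_{\chi\in\widehat{\Gm^d(k_j)}}\operatorname{tr}\bigl(\mathrm{Frob}\mid R\Gamma_c(\Gm^d,N_\chi)\bigr)
 =(q_j-1)^d\,\operatorname{tr}\bigl(\mathrm{Frob}\mid N_{1}\bigr),
\]
where \(N=\Lambda(M_{j,0})\) and \(N_1\) is its stalk at the identity. Split the left side over \(X_j\) and its complement. On \(X_j\) only \(\HH^0_c\) survives, giving \(q_j^{d/2}\) times the normalized trace sum. On the complement we use Theorem~\ref{thm:unram-count} for \(N\), which has at most \(\Phi^{\mathrm{ur}}_d(C')q_j^{d-1}\) points. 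Each term there is bounded by Deligne's weight bound together with the Betti bound, giving at most \(O_{C'}(q_j^{d/2})\) in absolute value. Hence that part is \(O(q_j^{d-1+d/2})\). The stalk term is more delicate. Purity of \(N\) of weight zero gives \(|\operatorname{tr}(\mathrm{Frob}\mid N_1)|=O_{C'}(q_j^{d/2-1/2})\), using that a perverse sheaf pure of weight \(0\) has stalks of weight \(\le -i\) in degree \(-i\), together with the absence of a geometrically trivial summand. A trivial summand would be the only thing producing a stalk of weight \(-d\) in degree \(-d\) of size \(\sim q_j^{d/2}\) with nonzero average. Dividing by \(q_j^{d/2}|X_j|\sim q_j^{3d/2}\) gives a bound \(O(q_j^{-1/2})\) for the stalk contribution and \(O(q_j^{-1})\) for the exceptional-set contribution, uniformly in \(j\).

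Main obstacle: the stalk estimate, i.e.\ controlling \(\operatorname{tr}(\mathrm{Frob}\mid N_1)\) uniformly. Here I would first try the weight argument: the only constituent contributing weight \(-d\) in degree \(-d\) at a point is a lisse rank-one piece shifted by \([d]\). After twisting, such a piece is \(\cL_\psi[d]\) for a character \(\psi\) of \(\Gm^d(k_j)\). Its contribution to the \emph{averaged} sum is then \(q_j^{d/2}\,\delta_{\chi=\psi^{-1}}\), that is, a single term, which is negligible. So the weight-exactly-\(-d\) contributions, which are unavoidable, are harmless after averaging. The remaining stalk cohomology has weight \(\le -d-1\) with dimension bounded by \(C'\). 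The other potentially delicate point is the uniform complexity bound for \(\Lambda(M_j)\) through negligible-quotienting. For that I would use that \(\Lambda(M_j)\) is a subquotient of a perverse cohomology sheaf of \(M_j^{\ast_! a}\ast_!(M_j^\vee)^{\ast_! b}\), together with Sawin's bounds for \(\ast_!\), perverse truncation and subquotients.
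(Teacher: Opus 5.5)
\textbf{Gap: the characters outside \(X_j\).} At those characters the groups \(\HH^i_c(\Gm^d,N_\chi)\) with \(1\le i\le d\) need not vanish. Deligne's theorem only says their weights exceed the weight of \(\HH^0_c\) by at most \(i\). In your normalization, where \(\HH^0_c\) at \(\chi\in X_j\) has size \(q_j^{d/2}\), the degree-\(i\) term can be as large as \(q_j^{(d+i)/2}\), and up to \(q_j^{d}\) for \(i=d\). Your bound \(O(q_j^{d/2})\) is therefore wrong.

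Theorem~\ref{thm:unram-count} only tells you there are \(\ll q_j^{d-1}\) such characters. It must also be applied to \(M_{j,0}\), since \(X_j=X(M_{j,0})(k_j)\), not to \(N\). With that count alone, the complement contributes \(\ll q_j^{d-1}\cdot q_j^{d}/q_j^{3d/2}=q_j^{d/2-1}\) to the normalized average. This does not tend to zero once \(d\ge2\), which is why Katz's route stops at \(d=1\).

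The missing input is Theorem~\ref{thm:stratified}, which the paper uses for exactly this step. The characters with \(\HH^i_c(\Gm^d,N_\chi)\neq0\) lie in \(\mathcal V_{d,|i|}(N)\), so there are at most \(\Psi_d(C')q_j^{d-|i|}\) of them. The degree-\(i\) contribution is then \(\ll q_j^{d-i}q_j^{i/2}/q_j^{d}=q_j^{-i/2}\). Negative degrees vanish by Artin vanishing. Only the degree-\(0\) part on the complement of \(X_j\) is handled by the unramified count, and it contributes \(O(q_j^{-1})\).

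\textbf{The identity stalk.} Your analysis here misidentifies the obstruction, and your weight normalizations are inconsistent: you call \(N\) pure of weight zero but give \(\HH^0_c\) size \(q_j^{d/2}\). The unit of \((\langle M_j\rangle,\ast_{\mathrm{int}})\) is \(\delta_1\), whose stalk at \(1\) sits in degree \(0\). Shifted Kummer sheaves \(\cL_\psi[d]\) have Euler characteristic zero, so they are negligible and never occur in \(\Perv_{\mathrm{int}}(\Gm^d)\).

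The correct argument is Lemma~\ref{lem:identity-stalk}. Since \(N\) is semisimple with no \(\delta_1\) summand, \(\cH^j(N)_1=0\) for \(j\ge0\). Otherwise a simple summand \(Q\) with positive-dimensional support and \(\cH^0(Q)_1\neq0\) would map nonzero, hence injectively, to \(\delta_1\otimes\cH^0(Q)_1\). Only degrees \(j\le-1\) remain, with weights \(\le j\) when \(N\) is pure of weight zero. This gives \(|t_{N_0}(1;k_j)|\le F_d(C')q_j^{-1/2}\). So the step you flagged as the main obstacle is routine; the real obstacle is the uniform stratified count for the higher cohomology.
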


For \(d\ge2\), the horizontal equidistribution theorem of Forey--Fres\'an--Kowalski was conditional on
uniform bounds for the stratified cohomology-jump loci and the complement of the unramified locus (see
\cite[Th.~4.19, Rem.~4.20(2)]{ForeyFresanKowalski2021}).  Theorems~\ref{thm:stratified}
and~\ref{thm:unram-count} provide these bounds for \(\Gm^d\), and
Corollary~\ref{cor:equidistribution} gives the resulting unconditional theorem.

For \(d=1\), Corollary~\ref{cor:equidistribution} recovers Katz's equidistribution theorem
\cite[Th.~28.1]{Katz2012} for \(\Gm\).\footnote{Katz assumes instead that the \(M_j\) have a common
tannakian dimension and uniformly bounded generic rank, and these hypotheses already bound
\(\cu(M_j)\) uniformly.  For the degree-one embedding \(u_1\) of \(\Gm\) in \(\bbP^1\), the
curve-complexity formula \cite[Th.~7.3(2)]{SFFK23} gives
\(\cu(M_j)\le4\operatorname{rank}(M_j)+\operatorname{loc}(M_j)\), and \(\chi_c(\Gm,\ol\Ql[1])=0\)
reduces the Grothendieck--Ogg--Shafarevich formula to
\(\operatorname{loc}(M_j)=\chi_c(\Gm,M_j)\), which is the tannakian dimension of \(M_j\).  We thank
Emmanuel Kowalski for this observation.}  Like Katz's theorem, it applies along arbitrary sequences of
finite fields, covering the extension towers, the sequences of prime fields, and every amalgam of the
two.

\begin{rem}
\label{rem:sum-equidistribution}
Corollary~\ref{cor:equidistribution} has a companion at the level of the sums, the analogue on
\(\Gm^d\) along arbitrary sequences of finite fields of \cite[Th.~4.8]{ForeyFresanKowalski2021}.  It is
one of the variants that Forey--Fres\'an--Kowalski leave to the reader before
\cite[Th.~4.19]{ForeyFresanKowalski2021}.  Let \(\nu\) be the direct image of \(\mu_G^\sharp\) under the
trace of the standard representation \(K_G^\sharp\to\bbC\).  For \(\chi\in X_j\), the
Grothendieck--Lefschetz trace formula and \ref{item:T5} give
\[
        S(M_{j,0};k_j,\chi)=\operatorname{Tr}\Theta_{M_{j,0},k_j}(\chi).
\]
Since the trace is continuous on \(K_G^\sharp\), the sums \(S(M_{j,0};k_j,\chi)\), \(\chi\in X_j\),
become \(\nu\)-equidistributed in \(\bbC\) as \(j\to\infty\).  By Theorem~\ref{thm:unram-count} the
characters outside \(X_j\) make up a vanishing fraction of all characters of \(\Gm^d(k_j)\), so the
same holds for the sums indexed by the full character group.
\end{rem}

Zurbuchen \cite{Zurbuchen2026} has recently proved generic unramifiedness for perverse sheaves on an
arbitrary connected commutative algebraic group, together with stratification theorems for the
associated exponential sums and relative weak propagation theorems.  His stratification theorems refine
those of Forey--Fres\'an--Kowalski in that the ramification loci are closed and the stratifications hold
uniformly in a family indexed by a scheme.  The estimates there are for a fixed object, with implied
constants depending on the complex and on the subset of the character space being counted
\cite[Prop.~2.84]{Zurbuchen2026}.  Theorems~\ref{thm:delta-count}, \ref{thm:stratified}
and~\ref{thm:unram-count} are of a different nature.  Their constants depend only on the complexity
\(\cu\), uniformly as the perverse sheaf, the characteristic, and \(\ell\) vary.

\subsection{Outline of the proofs}
\label{sec:outline}

The proof has three parts, a uniform Mellin estimate in one multiplicative variable, an induction on
the dimension of the torus, and a tannakian deduction of the equidistribution corollary from the
resulting point counts.  Figure~\ref{fig:leitfaden} shows the dependencies.

\begin{figure}[t]
\centering
\resizebox{\textwidth}{!}{%
\begin{tikzpicture}[
        >=stealth,
        every node/.style={align=center},
        main/.style={draw, rounded corners, thick, inner sep=5pt, font=\small},
        hub/.style={draw, rounded corners, thick, inner sep=5pt, font=\small},
        ext/.style={font=\footnotesize\itshape, text=black!65},
        dep/.style={->, thick},
        lab/.style={font=\footnotesize, fill=white, inner sep=1.5pt},
]
\node[ext] (kl)   at (-4.2, 5.2) {boundary stalks as\\inertia cohomology};
\node[ext] (cpx)  at ( 4.2, 5.2) {complexity\\\ref{item:C1}--\ref{item:C7}};
\node[hub] (exc) at (0,4.0)
        {Lemma~\ref{lem:exceptional} \textup{(\S\ref{sec:relative-mellin})} and\\
         Lemma~\ref{lem:complexity-bounds}\textup{(iii)}\\[2pt]
         $\#E_{\Gm}(A)\le F_{d}(\cu(A))$};
\node[main] (t1) at (-2.5,1.6) {Theorem~\ref{thm:delta-count}\\counting $\Delta(K)$};
\node[main] (t2) at ( 2.9,1.6) {Theorem~\ref{thm:stratified}\\stratified vanishing};
\node[ext]  (amp) at ( 6.8,1.6) {perverse amplitude\\Lemma~\ref{lem:relative-amplitude}};
\node[main] (t3)  at (-2.5,-0.8) {Theorem~\ref{thm:unram-count}\\unramified locus};
\node[main] (cor) at ( 0.2,-3.1) {Corollary~\ref{cor:equidistribution}\\equidistribution};
\draw[dep] (kl)   -- (exc);
\draw[dep] (cpx)  -- (exc);
\draw[dep] (exc) -- (t1) node[lab, pos=0.58] {induction on $d$ \textup{(\S\ref{sec:counting})}};
\draw[dep] (exc) -- (t2);
\draw[dep] (amp) -- (t2);
\draw[dep] (t1) -- (t3) node[lab, pos=0.5] {applied to $L=M\oplus M^{\vee}$};
\draw[dep] (t3) -- (cor);
\draw[dep] (t2) -- (cor) node[lab, pos=0.30] {Weyl criterion};
\end{tikzpicture}%
}
\caption{Leitfaden.  Dependencies among Theorems~\ref{thm:delta-count}--\ref{thm:unram-count},
Corollary~\ref{cor:equidistribution}, and their principal inputs.}
\label{fig:leitfaden}
\end{figure}

Gabber--Loeser prove generic vanishing on a torus by locating the \emph{bad} characters in a finite
union of translates of algebraic cotori, read off from the boundary monodromy along a normal crossing
compactification \cite[Th.~4.1.1']{GabberLoeser1996}.  Forey--Fres\'an--Kowalski prove the same
structure unconditionally, using de Jong's alterations
\cite[Th.~2.12, Prop.~2.13]{ForeyFresanKowalski2021}.  The equidistribution applications require information
of a different kind, namely bounds on the number of finite-field characters in these loci, uniform in
the characteristic, in \(\ell\), and in the sheaf, the sheaf entering only through its complexity.

A strategy discussed by Forey--Fres\'an--Kowalski in \cite[Rem.~2.4]{ForeyFresanKowalski2021} is to make the
cotorus description effective.  The obstruction is that de Jong's alterations give no control on the
number of exceptional components, so this would require an effective form of de Jong's theorem or
embedded resolution of singularities.  We obtain the required uniform bounds without alterations or
resolution of singularities.  The main new input is a uniform one-coordinate Mellin estimate proved
from boundary inertia on relative nearby cycles.  Applying this estimate successively along the
coordinate projections of \(\Gm^d\) gives the point counts in
Theorems~\ref{thm:delta-count}--\ref{thm:unram-count}.

We now describe the one-coordinate estimate.  Write
\(\Gm^d=\Gm\times\Gm^{d-1}\), with \(q\) and \(p\) the projections to \(\Gm\) and to \(\Gm^{d-1}\), so
that a character of \(\Gm^d\) is a pair \((\lambda,\eta)\) of characters of the two factors.  For a
perverse sheaf \(A\) on \(\Gm^d\), twisting by \(\lambda\) and pushing forward to \(\Gm^{d-1}\)
produces the two complexes
\[
        B_{!,\lambda}=Rp_!(A\otimes q^*\cL_\lambda),
        \qquad B_{*,\lambda}=Rp_*(A\otimes q^*\cL_\lambda).
\]
Call \(\lambda\) \emph{exceptional} for \(A\) if the forget-supports morphism
\(B_{!,\lambda}\to B_{*,\lambda}\) is not an isomorphism, and let \(E_{\Gm}(A)\subseteq\widehat{\Gm}\)
be the set of exceptional \(\lambda\).  Lemma~\ref{lem:exceptional}, through
Lemma~\ref{lem:complexity-bounds}(iii), gives the quantitative estimate
\[
        \#E_{\Gm}(A)\le F_{d}(\cu(A)),
\]
where \(F_{d}\) is the function fixed after Lemma~\ref{lem:complexity-bounds}.  For
\(\lambda\notin E_{\Gm}(A)\), the common value \(B_\lambda\) of \(B_{!,\lambda}\) and
\(B_{*,\lambda}\) is a perverse sheaf on \(\Gm^{d-1}\) with \(\cu(B_\lambda)\le F_{d}(\cu(A))\).

The estimate for \(E_{\Gm}(A)\) is a boundary calculation.  Compactify the first factor to
\(\bbP^1\).  The localization triangle reduces the failure of \(B_{!,\lambda}\to B_{*,\lambda}\) to
the boundary components \(\{0,\infty\}\times\Gm^{d-1}\).  The Hochschild--Serre spectral sequence
for nearby cycles \cite[\textit{Th\'eor\`emes de finitude}, Lem.~3.11]{SGA45} identifies the
corresponding stalks with continuous inertia cohomology of the relative nearby-cycle complexes.
By the prime-to-\(\ell\) vanishing of Appendix~\ref{app:boundary}, this cohomology is carried by
the pro-\(\ell\) quotient of tame inertia.
Twisting by \(\cL_\lambda\) multiplies the tame inertia action at \(x\in\{0,\infty\}\) by the scalar
\(\lambda_x(\tau_x)\), in the local monodromy notation of Lemma~\ref{lem:kummer-local-monodromy}.
Consequently, if \(\lambda\) is exceptional, then \(\lambda_x(\tau_x)^{-1}\) occurs as a monodromy
eigenvalue on a relative nearby-cycle complex at one of the two boundary points.  Since
\[
        \lambda\longmapsto\lambda_x(\tau_x)
\]
is injective, the exceptional characters are bounded by the number of such eigenvalues.  The
complexity estimates for nearby cycles and eigenvalue strata bound this number solely in terms of
\(\cu(A)\) and \(d\), uniformly in the field and in \(\ell\).  This is a relative and quantitative
version of the mechanism in \cite[(6.5.1)]{KatzLaumon1985}.  The finite exceptional set of the
qualitative argument is replaced by a bound depending only on the complexity.

Theorem~\ref{thm:stratified} and the perverse case of Theorem~\ref{thm:delta-count} are proved by
separate inductions on \(d\), both starting from Corollary~\ref{cor:d-one} and both using the estimate
above.  By the projection formula, twisting \(B_\lambda\) by \(\eta\)
computes the twisted cohomology of \(A\) at \((\lambda,\eta)\), with and without compact supports.
For non-exceptional \(\lambda\), the counting problem for \(A\) at the characters \((\lambda,\eta)\)
is therefore exactly the counting problem for \(B_\lambda\) in dimension \(d-1\)
(Lemma~\ref{lem:fibre}), and the inductive bound applies to each of the at most \(q^n\) such
\(\lambda\).  The exponent in Theorem~\ref{thm:stratified} is preserved by
\[
        q^n\cdot q^{n((d-1)-i)}=q^{n(d-i)}.
\]

For the perverse case of Theorem~\ref{thm:delta-count} the exceptional characters may be estimated
trivially in the remaining \(d-1\) variables, contributing at most
\(\#E_{\Gm}(A)\,|\widehat{\Gm^{d-1}(k_n)}|\le F_d(\cu(A))\,q^{n(d-1)}\).  For
Theorem~\ref{thm:stratified} in degree \(i>1\) this is too large, of order \(q^{n(d-1)}\) rather than
\(q^{n(d-i)}\), and the exceptional fibres must enter the induction as well.  Artin affine vanishing
and the relative-dimension bound place \(B_{!,\lambda}\) in perverse degrees \([0,1]\) and
\(B_{*,\lambda}\) in \([-1,0]\) (Lemma~\ref{lem:relative-amplitude}).  The perverse spectral sequence
therefore converts a cohomology jump in degree \(\pm i\) at \((\lambda,\eta)\) into a jump in degree
\(i\) or \(i-1\) at \(\eta\) for one of the at most four perverse cohomology sheaves of
\(B_{!,\lambda}\) and \(B_{*,\lambda}\), each of complexity at most \(F_d(\cu(A))\), and the
induction applies to these.  The term \(i-1\) gives \(q^{n((d-1)-(i-1))}=q^{n(d-i)}\), which closes
the induction.  The case of an arbitrary complex in Theorem~\ref{thm:delta-count} then follows from
the perverse cohomology sheaves.

Theorem~\ref{thm:unram-count} requires no new boundary estimate.  Unramifiedness follows from weak
unramifiedness through the localization formalism of Gabber--Loeser.  The canonical comparison
morphisms \(\alpha_m\colon L^{\ast_!m}\to L^{\ast_*m}\), \(L=M\oplus M^\vee\), have Mellin transforms
with derived fibre \(\varphi_{L,\chi}^{\otimes m}\) at every character \(\chi\)
(Lemma~\ref{lem:comparison-monoidal}).  A Nakayama argument and \ref{item:GL5} invert them at every
character outside the locus \(\Delta(L)\) defined in \ref{item:GL2} of \S\ref{sec:mellin}.  The
compact and ordinary cohomology of every perverse cohomology sheaf of \(\Cone(\alpha_m)\) then
vanishes at such characters, and \ref{item:T5} gives unramifiedness.  This proves
Lemma~\ref{lem:weakly-unramified-inclusion}, the inclusion \(X_w(L)\subseteq X(M_0)\).  The
complement of \(X(M_0)\) is therefore contained in \(\Delta(L)(\ol\Ql)\), and
Theorem~\ref{thm:delta-count} applied to \(L\) bounds it.

Corollary~\ref{cor:equidistribution} follows from the Weyl criterion and \ref{item:T5}.  For every
nontrivial irreducible representation \(\rho\) of the common tannakian group,
Theorem~\ref{thm:stratified} bounds the characters at which the associated object has cohomology
outside degree zero, and Theorem~\ref{thm:unram-count} bounds the characters omitted from the
unramified averaging set.  The weight bounds for the remaining Frobenius traces then force every
nontrivial Weyl average to be \(O_\rho(q^{-1/2})\).

The text is organized as follows. \S\ref{sec:preliminaries} fixes the geometric and sheaf-theoretic conventions and recalls the complexity
estimates, Kummer sheaves, the Mellin transform, and the tannakian formalism used throughout.
\S\ref{sec:relative-mellin}
develops the relative Mellin transform in one coordinate, together with the complexity and
monodromy inputs over a base, and proves the exceptional-set bound.
\S\ref{sec:counting} proves Theorems~\ref{thm:delta-count} and~\ref{thm:stratified}, and
\S\ref{sec:unramified} proves Theorem~\ref{thm:unram-count} and
Corollary~\ref{cor:equidistribution}.

\subsection*{Acknowledgements}

We thank Nick Katz, Emmanuel Kowalski and Will Sawin for their comments on a first version of this
article.

\section{Preliminaries}
\label{sec:preliminaries}

This section fixes the geometric and sheaf-theoretic conventions and recalls the complexity estimates,
Kummer sheaves, the Mellin transform, and the tannakian formalism used throughout.

Throughout, \(k=\bbF_q\) is a finite field of characteristic \(p\), \(k_n=\bbF_{q^n}\), and \(\bar k\) is
a fixed algebraic closure of \(k\).  We fix a prime \(\ell\neq p\) and an algebraic closure \(\ol\Ql\) of
\(\mathbb Q_\ell\).  Unadorned varieties are over \(\bar k\), while arithmetic varieties carry a
displayed base field.  When a statement allows the algebraically closed base field to vary, we denote it
by \(\Omega\), and unadorned varieties in that statement and its proof are over \(\Omega\).

\subsection{Embeddings}
\label{sec:embeddings}

We write \(\Gm^d/k\) for the split \(d\)-dimensional torus and \(\Gm^d/\bar k\) for its geometric base
change.  Thus unadorned \(\Gm^d\) means \(\Gm^d/\bar k\), except in statements over \(\Omega\).
For every \(d\ge1\), we use the product compactification
\(\Gm^d\hookrightarrow(\bbP^1_{\bar k})^d\), with boundary induced by
\(\Gm\hookrightarrow\bbP^1\), and fix the locally closed immersion
\[
        u_d\colon \Gm^d\hookrightarrow\bbP^{2^d-1}_{\bar k}
\]
obtained by the Segre embedding.  For \(d\ge2\), these choices are compatible with the decomposition
\(\Gm^d=\Gm\times\Gm^{d-1}\), in that the embedding induced by \(u_1\) and \(u_{d-1}\) is \(u_d\).

For \(x\in\{0,\infty\}\), put
\[
 U_0=\bbP^1\setminus\{\infty\},\quad t_0(z)=z,
 \qquad
 U_\infty=\bbP^1\setminus\{0\},\quad t_\infty(z)=z^{-1}.
\]
Each \(t_x\) is an isomorphism \(U_x\xrightarrow{\ \sim\ }\bbA^1\) carrying \(x\) to \(0\).
We use the standard immersion \(a\colon\bbA^1\hookrightarrow\bbP^1\).  Products carry the corresponding
Segre embeddings.  The morphisms used below are products and compositions of coordinate projections,
multiplication, inversion, the identity section, the open and closed immersions associated with
\(\Gm\hookrightarrow\bbP^1\), the maps \(t_x\), and the Segre embeddings.  We equip each graph
with the induced locally closed immersion into the product of the projective ambient spaces.  These
embeddings and morphisms are fixed over \(\mathbb Z\), and will be used in 
Lemma~\ref{lem:standard-geometric-complexity}.

\subsection{Sheaf-theoretic conventions}
\label{sec:sheaves}

We use the pro-\'etale formalism of Bhatt--Scholze \cite{BhattScholze2015}.  For every scheme \(X\),
the pro-\'etale site \(X_{\mathrm{pro\acute et}}\) is defined, and every morphism
\(f\colon X\to Y\) induces a morphism of pro-\'etale topoi
\cite[Def.~4.1.1, Lem.~5.4.1]{BhattScholze2015}.  Consequently, \(Rf_*\) is defined on the ambient
derived categories without a finiteness hypothesis on \(f\).

All schemes in this paper are noetherian.  For a scheme \(Y\), we set
\[
        \Db(Y,\ol\Ql)=D_{\mathrm{cons}}(Y_{\mathrm{pro\acute et}},\ol\Ql),
\]
where the right-hand side is the constructible derived category of
\cite[Defs.~6.8.6, 6.8.8]{BhattScholze2015}.  It is equivalent to the usual constructible derived
category, by \cite[Prop.~6.8.14]{BhattScholze2015} together with
\cite[Def.~6.5.1, Prop.~6.6.11]{BhattScholze2015}.

Except in the construction of nearby cycles, the six operations in the main text are applied to schemes
of finite type over a field on which \(\ell\) is invertible.  They are quasi-compact and
quasi-separated, and every morphism to which \(Rf_!\) is applied is separated and of finite type,
hence finitely presented.  Thus the hypotheses of \cite[\S6.7, Rem.~6.8.15]{BhattScholze2015} are
satisfied.  The direct image \(R\bar j_{x,*}\) in the nearby-cycle construction is taken in the ambient
derived category.

For \(Y\) of finite type over either \(k\) or \(\bar k\), we write
\(\Perv(Y,\ol\Ql)\subset\Db(Y,\ol\Ql)\) for the abelian category of perverse sheaves with respect to the
middle perversity of \cite{BBDG18}, with perverse \(t\)-structure
\(({}^pD^{\le0},{}^pD^{\ge0})\).  Geometric base change from \(k\) to \(\bar k\) is \(t\)-exact.  We use
\(D\) for Verdier duality, \(\boxtimes\) for the external tensor product, \(\cH^i\) for the cohomology
sheaves of a complex, \(\Supp\) for support, \(\Cone\) for the mapping cone, and \(\HH^i\), \(\HH^i_c\)
for hypercohomology without and with compact supports.

Let \(f\colon X\to Y\) be a separated morphism of finite type.  Then \(X\) and \(Y\) are quasi-compact
and quasi-separated, and \(f\) is finitely presented, as required in
\cite[Def.~6.7.6]{BhattScholze2015}.  By Nagata compactification, choose a factorization
\(f=\bar f\circ j\), where
\(j\colon X\hookrightarrow\bar X\) is an open immersion and \(\bar f\colon\bar X\to Y\) is proper
\cite[Tag~0F41]{stacks-project}.  For an open immersion \(j\), let \(\varphi_j\colon j_!\to Rj_*\) be the
natural transformation whose value at \(K\) corresponds to \(\operatorname{id}_K\) under
\[
        \operatorname{Hom}(j_!K,Rj_*K)\simeq\operatorname{Hom}(K,K).
\]
The \emph{forget-supports transformation} for \(f\) is
\begin{equation}
\label{eq:forget-supports}
        \varphi_f=R\bar f_*(\varphi_j)\colon
        Rf_!=R\bar f_*\,j_!\longrightarrow R\bar f_*\,Rj_*=Rf_*.
\end{equation}
For \(K\in\Db(X,\ol\Ql)\), we write
\[
        \varphi_f(K)\colon Rf_!K\longrightarrow Rf_*K
\]
for the value of this natural transformation at \(K\).
It is independent of the compactification.  Its compatibilities with composition, external products, and
the lisse projection formula are recorded in Lemmas~\ref{lem:forget-supports}
and~\ref{lem:forget-supports-products}, proved in Appendix~\ref{app:forget-supports}.

We will use the following combined form of the compatibilities with composition and the lisse projection
formula.  Let \(p\colon X\to Y\) and \(f\colon Y\to S\) be separated morphisms of finite type, let
\(K\in\Db(X,\ol\Ql)\), and let \(\cL\) be lisse on \(Y\).  For \(\star\in\{!,*\}\), let
\[
 \beta_\star\colon
 R(f\circ p)_\star(K\otimes p^*\cL)
 \xrightarrow{\ \sim\ }
 Rf_\star(Rp_\star K\otimes\cL)
\]
be the composition and projection-formula isomorphism.
For \(\star=!\), the formula used here is the projection formula
\cite[Exp.~XVII, Prop.~5.2.9]{SGA4}, \cite[Prop.~A.1.5(ii)]{GabberLoeser1996}.  For \(\star=*\), it is
the universal coefficient formula for a lisse sheaf \cite[Prop.~A.1.5(i)]{GabberLoeser1996}.  A direct
proof of the latter is included in the proof of Lemma~\ref{lem:forget-supports-products}(2).
Then the following diagram commutes.
\begin{equation}
\label{eq:forget-supports-composition-lisse}
\begin{tikzcd}[column sep=large]
R(f\circ p)_!(K\otimes p^*\cL)
  \arrow[rr,"\varphi_{f\circ p}"]
  \arrow[d,"\beta_!"',"\rotatebox{90}{\(\sim\)}"]
&&
R(f\circ p)_*(K\otimes p^*\cL)
  \arrow[d,"\beta_*","\rotatebox{90}{\(\sim\)}"'] \\
Rf_!(Rp_!K\otimes\cL)
  \arrow[r,"Rf_!(\varphi_p(K)\otimes\operatorname{id}_{\cL})"']
&
Rf_!(Rp_*K\otimes\cL)
  \arrow[r,"\varphi_f"']
&
Rf_*(Rp_*K\otimes\cL).
\end{tikzcd}
\end{equation}
The composition formula in Lemma~\ref{lem:forget-supports}(2) gives the lower composite.  Lemma~\ref{lem:forget-supports-products}(2)
identifies its first arrow under the projection-formula isomorphisms.

For an inverse system of \'etale sheaves with surjective transition maps, Bhatt--Scholze identify
Jannsen's continuous \'etale cohomology with the pro-\'etale cohomology of its inverse limit
\cite[Prop.~5.6.2]{BhattScholze2015}.  We use this identification throughout.  In particular, the
comparison with Galois cohomology and the Hochschild--Serre spectral sequence
\cite[(3.2),(3.3)]{Jannsen88} apply.  For a profinite group \(G\), a finite extension
\(E/\mathbb Q_\ell\), and a finite-dimensional continuous \(E\)-representation \(V\) with a
\(G\)-stable \(\mathcal O_E\)-lattice \(\Lambda\), choose a uniformizer \(\varpi\) of \(E\) and set
\[
 R\Gamma_{\mathrm{cts}}(G,V)
 =\left(R\varprojlim_nR\Gamma_{\mathrm{cts}}(G,\Lambda/\varpi^n\Lambda)\right)
  \otimes_{\mathcal O_E}E.
\]
This is independent of \(\Lambda\) and \(\varpi\) and computes Tate's continuous-cochain cohomology
\cite[(2.1),(2.2)]{Jannsen88}.  For an \(\ol\Ql\)-representation, choose an \(E\)-model and extend
scalars from \(E\) to \(\ol\Ql\).  The result is independent of the model.  An \(E\)-model always
exists.  The image of a continuous representation \(G\to\operatorname{GL}_n(\ol\Ql)\) of a profinite
group is compact, the finite subextensions of \(\ol\Ql/\mathbb Q_\ell\) are countably many, and the
intersections of the image with the closed subgroups \(\operatorname{GL}_n(E)\) cover it.  By the Baire
category theorem, one such intersection is open in the image, and adjoining to \(E\) the entries of
finitely many coset representatives gives a finite extension containing all matrix entries.  A compact
subgroup of \(\operatorname{GL}_n(E)\) moreover stabilizes an \(\mathcal O_E\)-lattice.

\subsection{Complexity}
\label{sec:complexity}

For a complex valued function \(f\) and a nonnegative function \(g\), we write \(f\ll g\), or \(f=O(g)\), if
\(|f|\le Kg\) for some constant \(K\ge0\).  For nonnegative \(f\) we write \(f\asymp g\) if both
\(f\ll g\) and \(g\ll f\) hold.  A subscript on \(\ll\) or \(O\) lists parameters on which the constant
may depend, and any further dependence of constants is stated at each occurrence.

If an estimate first produces a function \(f:\mathbb Z_{\ge0}\to\mathbb R_{\ge0}\), we replace it by the
nondecreasing function \(f^\uparrow:\mathbb R_{\ge0}\to\mathbb R_{\ge0}\) defined by
\[
        f^\uparrow(C)=\max\{f(e)\mid e\in\mathbb Z_{\ge0},\ e\le\lceil C\rceil\}.
\]
We retain the notation \(f\) after this replacement.  Thus all functions used in upper bounds below are
nondecreasing.

Let \(\Omega\) be an algebraically closed field and let \(\ell\) be invertible in \(\Omega\).  We recall
Sawin's complexity and the properties used below.

\begin{enumerate}[label=\textup{(C\arabic*)}, ref=\textup{(C\arabic*)}]
\item\label{item:C1} \emph{Definition} \cite[Defs.~3.2, 6.3, 6.5, 6.6]{SFFK23}.  For
\(A\in\Db(\bbP^n_\Omega)\),
\[
        c(A)=\max_{0\le m\le n}\sum_{i\in\mathbb Z}
        h^i\bigl(\bbP^m_{\Omega'},l_{a_m}^*A\bigr),
\]
where \(a_m\) is a geometric generic point of the space of \((n+1)\times(m+1)\) matrices, defined over
an algebraically closed extension \(\Omega'\) of \(\Omega\), and
\(l_{a_m}:\bbP^m_{\Omega'}\to\bbP^n_{\Omega'}\) is the associated linear map.

For an embedded quasi-projective variety \(u:X\hookrightarrow\bbP^n\) and \(A\in\Db(X)\), put
\(c_u(A)=c(u_!A)\) and \(c(u)=c_u(\ol\Ql)\).
For a morphism \(f:(X,u)\to(Y,v)\) with \(u:X\hookrightarrow\bbP^m\) and \(v:Y\hookrightarrow\bbP^n\),
let \(a_p\) and \(b_q\) be geometric generic points of the spaces of \((m+1)\times(p+1)\) and
\((n+1)\times(q+1)\) matrices, defined over a common algebraically closed extension \(\Omega'\) of
\(\Omega\).  The complexity of \(f\) relative to \((u,v)\) is
\[
        c_{u,v}(f)=\max_{0\le p\le m}\max_{0\le q\le n}\sum_{i\in\mathbb Z}
        h^i_c\bigl(X_{\Omega'},\,u^*l_{a_p,*}\ol\Ql\otimes f^*v^*l_{b_q,*}\ol\Ql\bigr).
\]
The Betti numbers may equally be computed as those of the intersection of the graph of \(f\) in
\(\bbP^m\times\bbP^n\) with a product of linear subspaces, for \((a_p,b_q)\) in a dense open subset of
the product of the two matrix spaces.  On
\(X=\operatorname{Spec}\Omega\), the complexity of a complex is the sum of the dimensions of its
cohomology groups.  For \(A\in\Db(\Gm^d)\), we write
\[
        \cu(A)=c_{u_d}(A)=c(u_{d,!}A).
\]

\item\label{item:C2} \emph{Positivity} \cite[Prop.~3.4, Rem.~6.4]{SFFK23}.  The complexity
\(c_u(A)\) is a nonnegative integer, and \(c_u(A)=0\) if and only if \(A=0\).

\item\label{item:C3} \emph{Six operations} \cite[Th.~6.8, estimates~(6.1), (6.2), (6.5), (6.6), (6.8)]{SFFK23}.
For a morphism \(f:(X,u)\to(Y,v)\), objects \(A,B\in\Db(X)\), and \(C\in\Db(Y)\),
\[
\begin{gathered}
        c_u(DA)\ll c(u)\,c_u(A),
        \qquad
        c_u(A\otimes B)\ll c_u(A)\,c_u(B),
        \qquad
        c_u(f^*C)\ll c_{u,v}(f)\,c_v(C),\\
        c_v(Rf_!A)\ll c_{u,v}(f)\,c_u(A),
        \qquad
        c_v(Rf_*A)\ll c(u)\,c(v)\,c_{u,v}(f)\,c_u(A),
\end{gathered}
\]
with implied constants depending only on the ambient projective dimensions.

\item\label{item:C4} \emph{Triangles, sums, shifts} \cite[Prop.~6.14]{SFFK23}.  In a distinguished
triangle, the complexity of each vertex is at most the sum of the complexities of the other two.
Moreover \(c_u(A\oplus B)=c_u(A)+c_u(B)\), and \(c_u(A[h])=c_u(A)\) for \(h\in\mathbb Z\).  In
particular, a direct summand has complexity at most that of the ambient object.

\item\label{item:C5} \emph{External products} \cite[Prop.~6.12]{SFFK23}.  For embedded varieties
\((X,u)\) and \((Y,v)\), with \(u\boxtimes v\) the embedding of \(X\times Y\) induced by the Segre
embedding and \(p_1,p_2\) the projections,
\[
        c_{u\boxtimes v}(A\boxtimes B)
        \ll c_{u\boxtimes v,u}(p_1)\,c_{u\boxtimes v,v}(p_2)\,c_u(A)\,c_v(B),
\]
with an implied constant depending only on the ambient projective dimensions.

\item\label{item:C6} \emph{Jordan--H\"older} \cite[Th.~6.15]{SFFK23}.  If
\((A_{i,j})_{1\le j\le n_i}\) are the Jordan--H\"older factors of \({}^pH^i(A)\), repeated with
multiplicity, then
\[
        c_u(A)\le\sum_{i\in\mathbb Z}\sum_{j=1}^{n_i}c_u(A_{i,j})\ll c(u)\,c_u(A),
\]
with an implied constant depending only on the ambient projective dimension.  Combined with
\ref{item:C4}, this shows that every subquotient of every \({}^pH^i(A)\) has complexity at most a
constant multiple of \(c(u)\,c_u(A)\).

\item\label{item:C7} \emph{Nearby and vanishing cycles} \cite[Cor.~6.18]{SFFK23}.  Let
\(f:(X,u)\to(S,v)\) be flat, with \(S\) a smooth irreducible curve, and let \(\sigma\in S\) be closed.
Put \(S_{(\sigma)}=\operatorname{Spec}\mathcal O^{\mathrm{sh}}_{S,\sigma}\), and let
\[
        f_{(\sigma)}:X\times_S S_{(\sigma)}\longrightarrow S_{(\sigma)}
\]
be the base change of \(f\).  Denote by \(\Psi_f\) and \(\Phi_f\) the nearby- and vanishing-cycle
functors for \(f_{(\sigma)}\), whose values on \(A\in\Db(X)\) are formed from the pullback of \(A\) to
\(X\times_S S_{(\sigma)}\) and viewed as objects on \(X_\sigma\).  If \(u_\sigma\) denotes the induced
embedding of \(X_\sigma\), then
\[
        c_{u_\sigma}(\Psi_fA)\ll c_{u,v}(f)^2c_u(A),
        \qquad
        c_{u_\sigma}(\Phi_fA)\ll c_{u,v}(f)^2c_u(A).
\]
The implied constants depend only on the ambient projective dimensions.
\end{enumerate}

The implied constants in \ref{item:C3}, \ref{item:C5}, \ref{item:C6} and \ref{item:C7} are uniform in
\(\Omega\) and in \(\ell\), while \(c(u)\) and \(c_{u,v}(f)\) remain inputs.\footnote{Uniformity in
\(\ell\) means that one constant serves every prime \(\ell\) invertible in \(\Omega\).  This holds
because the implied constants in the statements cited in \ref{item:C3}, \ref{item:C5},
\ref{item:C6} and \ref{item:C7} depend only on the ambient projective dimensions.}

\begin{lem}
\label{lem:betti-presentation}
For all integers \(n_1,n_2,r,e\ge0\) there is a constant \(B_{n_1,n_2,r,e}\ge1\), independent of
\(\Omega\) and of \(\ell\), with the following property.  Let \(Z=Z_1\setminus Z_2\), where \(Z_1\) and
\(Z_2\) are closed subschemes of \(\bbP^{n_1}_\Omega\times\bbP^{n_2}_\Omega\) defined, on each product
of standard affine charts, by at most \(r\) equations of degrees at most \(e\).  Then every compactly
supported Betti sum of a generic linear or product-linear section of \(Z\), as in
\cite[Defs.~6.3, 6.6]{SFFK23}, is at most \(B_{n_1,n_2,r,e}\).
\end{lem}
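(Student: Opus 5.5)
The plan is to reduce everything to Katz's uniform bound on sums of Betti numbers of affine varieties \cite{Katz2001}. That theorem says: for an affine scheme \(V\subseteq\bbA^N_{\Omega}\) cut out by at most \(s\) polynomials of degree at most \(e\), we have \(\sum_i h^i_c(V,\ol\Ql)\le C(N,s,e)\). Here \(C(N,s,e)\) depends only on \(N\), \(s\), \(e\), and not on the algebraically closed field \(\Omega\) or on \(\ell\). (Katz states an explicit bound such as \(3(2+e)^{N+s}\) for \(h^*_c\).) Since the Betti sums of \cite[Defs.~6.3, 6.6]{SFFK23} are computed over an algebraically closed extension \(\Omega'\) of \(\Omega\), uniformity in the base field is exactly what we need.

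\emph{Step 1: describe the linear sections.} A generic linear or product-linear section of \(Z\) is \(Z\cap(\Lambda_1\times\Lambda_2)\), where \(\Lambda_j\subseteq\bbP^{n_j}_{\Omega'}\) is a linear subspace. By the remark in \ref{item:C1}, we may compute the Betti numbers on the intersection with the product of linear subspaces rather than through the pullback by \(l_{a_p}\times l_{b_q}\); for generic parameters the two agree. Each \(\Lambda_j\) is cut out by at most \(n_j\) linear forms. So \(Z\cap(\Lambda_1\times\Lambda_2)=Z_1'\setminus Z_2'\), where \(Z_i'=Z_i\cap(\Lambda_1\times\Lambda_2)\). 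On each product of standard affine charts, \(Z_i'\) is defined by at most \(r+n_1+n_2\) equations of degree at most \(\max(e,1)\): the dehomogenized linear forms are affine-linear on the chart. Working in the ambient space avoids the problem that \(l^{-1}\) of a standard chart is not a standard chart.

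\emph{Step 2: reduce to affine pieces.} Stratify \(\bbP^{n_1}\times\bbP^{n_2}\) into the standard cells \(\bbA^{a}\times\bbA^{b}\), of which there are \((n_1+1)(n_2+1)\). Each cell is a closed subset of a product of standard charts, cut out by setting some coordinates to zero, i.e.\ by at most \(n_1+n_2\) extra linear equations. The excision triangles for a filtration by closed unions of cells give
\[
 \textstyle\sum_i h^i_c(Y)\le\sum_{\text{cells }\sigma}\sum_i h^i_c(Y\cap\sigma)
\]
for any locally closed \(Y\). For each cell, the triangle for \(Z_1'\cap Z_2'\cap\sigma\subseteq Z_1'\cap\sigma\) gives
\[
 h^*_c\bigl((Z_1'\setminus Z_2')\cap\sigma\bigr)\le h^*_c(Z_1'\cap\sigma)+h^*_c(Z_1'\cap Z_2'\cap\sigma).
\]
Both terms on the right are affine schemes in at most \(n_1+n_2\) variables, defined by at most \(2r+2(n_1+n_2)\) equations of degree at most \(\max(e,1)\). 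Katz's bound therefore applies to each. Summing over cells and over the finitely many section dimensions \((p,q)\) gives a constant \(B_{n_1,n_2,r,e}\) independent of \(\Omega\) and \(\ell\). We may enlarge it to be at least \(1\).

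\emph{Main obstacle.} The delicate point is matching Sawin's definitions with honest intersections. Definitions 6.3 and 6.6 use \(u^*l_{a_p,*}\ol\Ql\) for a geometric generic point, and we must check that this equals the compactly supported cohomology of \(Z\cap(\Lambda_1\times\Lambda_2)\). I would handle this by noting that \(l_{a_p}\) is a closed immersion for generic \(a_p\), namely an injective linear map, once \(p\le n_j\). Then \(l_{a_p,*}\ol\Ql\) is the constant sheaf on the image linear subspace, and the tensor product of the two pullbacks is the constant sheaf on the intersection. All remaining steps are formal.
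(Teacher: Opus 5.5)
Your proof is correct. It follows the paper's overall strategy: impose the section's linear equations in the ambient \(\bbP^{n_1}\times\bbP^{n_2}\), reduce to closed affine schemes whose presentations are bounded in terms of \(n_1,n_2,r,e\), and apply a Betti-number bound for such schemes that is independent of the field and of \(\ell\). The reduction step, however, is done differently.

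The paper covers \(Z\) by principal affine opens of the charts, namely the nonvanishing loci inside \(Z_1\) of the chart equations of \(Z_2\). It makes each of these a closed affine scheme by adjoining inverse variables, controls the overlaps with the \v{C}ech spectral sequence for compact supports, and then applies \cite[Prop.~6.21]{SFFK23} with target a point.

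You instead filter by the affine cells \(\bbA^a\times\bbA^b\) and use excision triangles, and you remove \(Z_2\) with one further excision triangle. As a result only closed affine schemes appear and every estimate is additive. This avoids both the inverse variables and the spectral sequence, and it gives explicit bookkeeping: at most \(2(n_1+1)(n_2+1)\) affine pieces, each in \(n_1+n_2\) variables, cut out by at most \(2r+2(n_1+n_2)\) equations of degree at most \(\max(e,1)\).

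The cost of your route is that excision bounds only compactly supported sums. The open-cover argument would adapt equally well to ordinary cohomology, but only compactly supported sums are needed here, so nothing is lost.

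Your remark that the section should be formed in the ambient space, rather than by pulling back along \(l\) (whose preimages of standard charts are not standard charts), also makes explicit what the paper compresses into ``preserves the chart presentation.''
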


\begin{proof}
A generic linear or product-linear section imposes additional linear equations and preserves the chart
presentation.  A finite cover by principal affine opens, realized as closed affine schemes by adjoining
inverse variables, together with the \v{C}ech spectral sequence reduces the required sums to those of
closed affine schemes defined by a number of equations and degrees bounded in terms of \(n_1\),
\(n_2\), \(r\), and \(e\).  Passing to the reduction does not change \'etale cohomology.
Proposition~6.21 of \cite{SFFK23}, applied with target a point, bounds these sums by a constant
depending only on the ambient dimensions and on the number and degrees of the defining equations.
\end{proof}

\begin{lem}
\label{lem:standard-geometric-complexity}
For every \(d\ge1\), there is an integer \(D_d^{\mathrm{std}}\ge1\), depending only on \(d\), such that
the complexities of the embeddings and embedded morphisms specified in \S\ref{sec:embeddings} that occur
in dimensions at most \(d\), including their products with \((\Gm^r,u_r)\) for \(1\le r\le d\), are at
most \(D_d^{\mathrm{std}}\) after base change to any algebraically closed field \(\Omega\).  The integer
\(D_d^{\mathrm{std}}\) is independent of every prime \(\ell\) invertible in \(\Omega\).
\end{lem}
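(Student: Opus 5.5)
The plan is to reduce every complexity in question to a Betti-number count of an explicit scheme, and then apply Lemma~\ref{lem:betti-presentation}. The finitely many objects listed in \S\ref{sec:embeddings} are fixed over \(\mathbb Z\): the embeddings \(u_r\), the coordinate projections, multiplication, inversion, the identity section, the immersions attached to \(\Gm\hookrightarrow\bbP^1\), the maps \(t_x\), the Segre embeddings, their composites and products, and their products with \((\Gm^r,u_r)\) for \(r\le d\). Only finitely many occur in dimension at most \(d\), so it suffices to bound each one separately by a constant independent of \(\Omega\) and \(\ell\), and then take the maximum.

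\emph{Embeddings.} By \ref{item:C1}, \(c(u)=c(u_!\ol\Ql)\) is a maximum over \(m\) of the Betti sums of \(l_{a_m}^*u_!\ol\Ql\). By proper base change these equal the compactly supported Betti sums of the generic linear section \(l_{a_m}^{-1}(u(X))\). The image \(u_d(\Gm^d)\) is the Segre image of \((\bbP^1)^d\) minus the coordinate hyperplanes \(x_{I}=0\) corresponding to boundary divisors. It is therefore \(Z_1\setminus Z_2\), where \(Z_1\) is cut out by the quadratic Segre relations and \(Z_2\) by boundedly many linear equations (write the union of the boundary divisors as the vanishing of a product, or take the complement in stages). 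Both numbers and degrees depend only on \(d\). Every other embedding in the list is a locally closed immersion of the same kind, possibly in a product \(\bbP^{n_1}\times\bbP^{n_2}\) or its Segre image, with equations of degree bounded in terms of \(d\). Lemma~\ref{lem:betti-presentation} then bounds \(c(u)\) uniformly. One small point needs care: if \(Z_2\) is a union rather than a complete intersection, I would either use an inclusion--exclusion over the components (open--closed triangles, \ref{item:C4}) or present the complement on charts by inverting the relevant coordinates.

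\emph{Morphisms.} By the remark in \ref{item:C1}, \(c_{u,v}(f)\) is the maximum of the compactly supported Betti sums of the graph \(\Gamma_f\subset\bbP^m\times\bbP^n\), intersected with a generic product of linear subspaces. For each morphism on the list the graph is explicitly given by equations with integer coefficients of bounded degree. For instance, multiplication on \(\Gm^r\) has graph given by bilinear relations in Segre coordinates, and inversion swaps homogeneous coordinates on each factor. The graph is therefore again a difference \(Z_1\setminus Z_2\) of closed subschemes of a product of two projective spaces, with \(r,e\) bounded in terms of \(d\). Lemma~\ref{lem:betti-presentation} applies with product-linear sections. For compositions and products I would not rely on submultiplicativity but would write the graph directly, since it is still defined over \(\mathbb Z\) by boundedly many equations. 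Products with \((\Gm^r,u_r)\) are handled the same way: the product graph is the product of graphs, and the combined Segre presentation stays bounded.

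The main obstacle is only bookkeeping. One must check that every listed object is \emph{uniformly} presented, that is, that the degrees and equation counts on each product of standard charts do not depend on the characteristic. This holds because everything is the base change of a single \(\mathbb Z\)-scheme presentation. One must also check that the Segre image representations really are differences of two closed subschemes of the stated form. Once this is verified for the finite list, \(D_d^{\mathrm{std}}\) is the maximum of the resulting constants \(B_{n_1,n_2,r,e}\). It depends only on \(d\), and by Lemma~\ref{lem:betti-presentation} it is independent of \(\Omega\) and \(\ell\).
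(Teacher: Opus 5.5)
Your proposal is correct and follows the paper's proof. You observe that only finitely many objects occur, use the generic (product-)linear-section description of \cite[Def.~6.6]{SFFK23} to reduce to compactly supported Betti sums of explicitly presented locally closed subschemes and graphs, apply Lemma~\ref{lem:betti-presentation}, and take the maximum. The one imprecision is harmless: the boundary of the Segre image of \((\bbP^1)^d\) is not cut out by linear equations, but it is the zero locus of a single product of Segre coordinates, which is the vanishing-of-a-product presentation you also mention.
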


\begin{proof}
For fixed \(d\), only finitely many embedded schemes and morphisms occur.  By
\cite[Def.~6.6]{SFFK23}, it suffices to bound the compactly supported Betti sums of generic linear
sections of these schemes and generic product-linear sections of their graphs.  On standard affine
charts, the schemes, graphs, and boundaries are defined by a number of equations and degrees bounded in
terms of \(d\).  Lemma~\ref{lem:betti-presentation} bounds these sums, and taking the maximum gives
\(D_d^{\mathrm{std}}\).
\end{proof}

\begin{lem}
\label{lem:linear-reembedding}
For all integers \(0\le N\le M\), there is a constant \(E_{N,M}\ge1\), independent of \(\Omega\) and of
\(\ell\), with the following property.  For every locally closed embedding
\(w:Y\hookrightarrow\bbP^N_\Omega\), every closed immersion
\(l:\bbP^N_\Omega\hookrightarrow\bbP^M_\Omega\) induced by an injective linear map
\(\Omega^{N+1}\to\Omega^{M+1}\), and every \(B\in\Db(Y,\ol\Ql)\),
\[
        E_{N,M}^{-1}\,c_w(B)\le c_{l\circ w}(B)\le E_{N,M}\,c_w(B).
\]
\end{lem}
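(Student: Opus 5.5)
Write \(v=l\circ w\).  By \ref{item:C1}, \(c_v(B)=c(l_!w_!B)\), where \(l_!=l_*\) since \(l\) is a closed immersion, and \(c_w(B)=c(w_!B)\).  So it suffices to compare \(c(A)\) and \(c(l_*A)\) for \(A=w_!B\in\Db(\bbP^N_\Omega)\), with constants depending only on \(N\) and \(M\).

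For the upper bound, apply the pushforward estimate of \ref{item:C3} to the embedded morphism \(l\colon(\bbP^N,\mathrm{id})\to(\bbP^M,\mathrm{id})\).  This gives
\[
c(l_*A)\ll c_{\mathrm{id},\mathrm{id}}(l)\,c(A),
\]
with implied constant depending only on \(N,M\).  By \ref{item:C1}, \(c_{\mathrm{id},\mathrm{id}}(l)\) is a Betti sum of the intersection of the graph of \(l\), a linear subvariety of \(\bbP^N\times\bbP^M\), with generic product-linear subspaces.  After a linear change of coordinates in \(\bbP^M\), which does not affect generic sections, this graph is cut out by boundedly many bilinear equations of bounded degree.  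Lemma~\ref{lem:betti-presentation} therefore bounds it by a constant depending only on \(N,M\), uniformly in \(\Omega\), \(\ell\), and \(l\).  The mere fact that a bound exists for each \(l\) is not enough; the point is that the bound does not depend on the choice of \(l\).

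For the lower bound, choose a linear retraction.  Fix a complementary subspace and let \(\pi\colon\bbP^M\setminus\Lambda\to\bbP^N\) be the associated linear projection, where \(\Lambda\) is the projectivized complement, so that \(\pi\circ l=\mathrm{id}\).  Since \(l\) lands in \(\bbP^M\setminus\Lambda\), we have \(A=\pi_!(l_*A|_{\bbP^M\setminus\Lambda})\).  Now estimate in two steps.  Restriction to the open set \(\bbP^M\setminus\Lambda\), viewed as the embedded variety \(U\hookrightarrow\bbP^M\), is \(j^*\); by \ref{item:C3} it has complexity at most \(c_{\mathrm{id},\mathrm{id}}(j)\cdot c(l_*A)\).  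Then apply \(\pi_!\), again by \ref{item:C3}.  The complexities \(c(j)\) and \(c(\pi)\) are Betti sums of generic sections of explicit linear or bilinear schemes, so Lemma~\ref{lem:betti-presentation} bounds them uniformly.

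A simpler alternative for the lower bound is to use \ref{item:C1} directly.  The generic linear maps \(l_{a_m}\colon\bbP^m\to\bbP^N\) composed with \(l\) are linear maps \(\bbP^m\to\bbP^M\), but they are not generic in \(\bbP^M\); this is why I would prefer the retraction argument.  Conversely, for the upper bound one might attempt the same comparison directly: a generic \(\bbP^m\to\bbP^M\), pulled back along \(l\), meets \(\bbP^N\) in a generic linear section.  I would nonetheless rely on \ref{item:C3} as the cleaner route.

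The main obstacle is ensuring that all constants are uniform in \(l\).  This is handled by the observation that \(\mathrm{GL}_{M+1}\) acts transitively on the pairs \((l,\pi)\), and that the complexities of \(l\), \(j\), and \(\pi\) are invariant under this action.  They can therefore be computed for a single standard inclusion over \(\mathbb Z\), and Lemma~\ref{lem:betti-presentation} gives \(E_{N,M}\).
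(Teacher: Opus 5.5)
Your proof is correct. It follows the paper for the upper bound but takes a different and longer route for the lower bound.

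\textbf{Upper bound.} This is the same as in the paper: apply the pushforward estimate of \ref{item:C3} to \(l\colon(\bbP^N,\mathrm{id})\to(\bbP^M,\mathrm{id})\). The paper bounds \(c_{\mathrm{id},\mathrm{id}}(l)\) directly rather than through Lemma~\ref{lem:betti-presentation}. A product-linear section of the graph of \(l\) is \(\{x\in L:\ l(x)\in\Pi\}\), which is again a linear subspace of \(\bbP^N_{\Omega'}\), so its compactly supported Betti sum is at most \(N+1\). Uniformity in \(l\), \(\Omega\) and \(\ell\) is then immediate, and your \(\mathrm{GL}_{M+1}\)-transitivity argument is not needed.

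\textbf{Lower bound.} The paper uses no retraction. Since \(l\) is a closed immersion, \(w_!B\simeq l^*l_!(w_!B)\). The pullback estimate of \ref{item:C3}, applied to the same morphism \(l\), then gives \(c(w_!B)\ll c_{\mathrm{id},\mathrm{id}}(l)\,c(l_!w_!B)\). A single morphism complexity, bounded by \(N+1\), therefore controls both inequalities. You looked for a direct comparison through \ref{item:C1}, found it awkward, and overlooked this one-line use of the pullback estimate.

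Your retraction argument is nonetheless valid:
\begin{itemize}
\item Since \(\pi\circ l=\mathrm{id}\) and \(l\) lands in \(U=\bbP^M\setminus\Lambda\), you have \(A\simeq\pi_!\,j^*l_*A\).
\item The graphs of \(j\) and \(\pi\) are closed sets cut out by boundedly many bilinear equations, with \(\Lambda\times\bbP^\bullet\) removed, so Lemma~\ref{lem:betti-presentation} applies.
\item Invariance of complexity under \(\mathrm{GL}_{M+1}\) handles the choice of complement.
\end{itemize}
The morphism complexity for \(j\) should be \(c_{j,\mathrm{id}}(j)\) rather than \(c_{\mathrm{id},\mathrm{id}}(j)\), and the estimates hold up to implied constants, but these are cosmetic. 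The cost is two extra embedded morphisms whose complexities must be bounded uniformly, and in this setting that buys nothing beyond what \(l^*l_!\simeq\mathrm{id}\) gives for free.
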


\begin{proof}
View \(l\) as a morphism \((\bbP^N_\Omega,\operatorname{id})\to(\bbP^M_\Omega,\operatorname{id})\).  In the
formula of \ref{item:C1} for the complexity of a morphism, \(l_{a_p,*}\ol\Ql\) is the constant sheaf
on a linear subspace of \(\bbP^N_{\Omega'}\), and proper base change identifies \(l^*l_{b_q,*}\ol\Ql\)
with the constant sheaf on \(l^{-1}(\Pi)\), where \(\Pi\subseteq\bbP^M_{\Omega'}\) is the image of
\(l_{b_q}\).  Their tensor product is the constant sheaf on the intersection, again a linear subspace
of \(\bbP^N_{\Omega'}\), whose compactly supported Betti sum is at most \(N+1\).  Hence
\(c_{\operatorname{id},\operatorname{id}}(l)\le N+1\).  Since \(l\) is a closed immersion, one has
\(c_{l\circ w}(B)=c\bigl(l_!(w_!B)\bigr)\) and \(w_!B=l^*l_!(w_!B)\).  The pushforward and pullback
estimates \ref{item:C3} applied to \(l\) bound each of \(c_w(B)\) and \(c_{l\circ w}(B)\) by
\((N+1)\) times the other, up to implied constants depending only on \(N\) and \(M\).  Taking
\(E_{N,M}\) to dominate both products gives the claim.
\end{proof}

\begin{lem}
\label{lem:lissity-complexity}
For every \(N\ge0\), there is a nondecreasing function
\[
        H_N:\mathbb R_{\ge0}\longrightarrow\mathbb R_{\ge0}
\]
with the following property.  Let \(w:Y\hookrightarrow\bbP^N_\Omega\) be a locally closed embedding and
let \(K\in\Db(Y,\ol\Ql)\) satisfy \(c_w(K)\le C\).  There is a stratification
\[
        \Supp K=\bigsqcup_{\alpha\in I}W_\alpha
\]
by smooth connected locally closed subvarieties such that every \(\cH^i(K)|_{W_\alpha}\) is lisse and
\[
        \#I\le H_N(C),
        \qquad
        \sum_i\operatorname{rk}\bigl(\cH^i(K)|_{W_\alpha}\bigr)\le H_N(C)
        \quad(\alpha\in I).
\]
The function \(H_N\) is independent of \(\Omega\) and of \(\ell\).
\end{lem}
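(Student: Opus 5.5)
Applied to the constructible complex \(K\), this is a bounded-complexity version of the usual lissity stratification. I would take it from Sawin's stratification results in \cite{SFFK23} rather than prove it by hand. The plan has three steps: bound the complexity of the individual cohomology sheaves, stratify each one with bounded data, and then refine to a common stratification.

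\emph{Step 1: cohomology sheaves.} Each \(\cH^i(K)\) is a constituent of the standard \(t\)-structure truncations of \(K\). For the standard \(t\)-structure the analogue of \ref{item:C6} holds; see \cite[\S6]{SFFK23}, where complexity controls the ordinary cohomology sheaves as well. Together with \ref{item:C4}, this gives \(c_w(\cH^i(K))\ll_N C\). It also shows that only \(\ll_N C\) indices \(i\) have \(\cH^i(K)\neq0\), by positivity \ref{item:C2}.

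\emph{Step 2: stratify each sheaf.} For a single constructible sheaf \(F\) on \(Y\) with \(c_w(F)\le C'\), I would produce the stratification inductively on dimension. The generic rank of \(F\) on each top-dimensional component of \(\Supp F\) is bounded by \(c_w(F)\); this comes from restricting \(w_!F\) to a generic linear section of complementary dimension, which meets the component in a bounded number of points. The number and degrees of the irreducible components of \(\Supp F\) are bounded by the Betti numbers of generic linear sections, hence by \(C'\).

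The crucial point is the lisse locus. The non-lisse locus of \(F\) on the smooth part of a component is the support of a complex whose complexity is bounded by operations from \ref{item:C3}. One candidate is the cone of \(F\to j_*j^*F\) for a dense open \(j\); another, more robustly, comes from the stratification of singular supports underlying Sawin's proof of \ref{item:C6}. Replacing \(Y\) by this closed subset of bounded degree, with the restriction of bounded complexity, and iterating at most \(N\) times gives \(\ll_{N,C'}1\) strata with bounded ranks. The singular locus of each stratum has bounded degree, and splitting strata into smooth connected pieces keeps the count bounded, since the number of connected components is a Betti number bounded by Lemma~\ref{lem:betti-presentation}.

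\emph{Step 3: common refinement.} I would intersect the stratifications of the finitely many \(\cH^i(K)\). Intersections of bounded-degree locally closed sets have bounded degree. Decomposing them into smooth connected pieces multiplies the counts by a bounded factor. Finally I would take \(H_N\) nondecreasing via the \({}^\uparrow\) convention; uniformity in \(\Omega\) and \(\ell\) is inherited from the uniform constants.

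The main obstacle is Step 2: producing a closed set containing the non-lisse locus with degree bounded only by \(C'\). My first attempt would use the fact that the sum of the stalk ranks at generic points of linear sections is constant exactly on the lisse locus. Failing that, I would invoke directly the constructibility-with-bounded-complexity statement in \cite[\S6]{SFFK23}, which I expect to contain precisely this estimate.
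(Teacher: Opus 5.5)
Your fallback at the end of Step~2 is exactly the paper's proof, and once you adopt it the rest of your scaffolding can be dropped. The paper applies \cite[Lem.~6.26]{SFFK23} directly to \(K\). This gives a stratification on which \emph{all} the \(\cH^i(K)\) are lisse simultaneously, with the number of strata, their degrees and the complexities of their immersions bounded in terms of \((C,N)\). The paper then splits strata into connected components. It bounds the rank sum on each \(W_\alpha\) by restricting \(K\) to \(W_\alpha\) (the pullback estimate in \ref{item:C3}) and applying the generic-rank bound \cite[Prop.~3.4]{SFFK23}. This suffices because a lisse sheaf on a connected stratum has its generic rank at every point. So you need neither Step~1 nor the common refinement of Step~3. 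Step~1 in particular rests on a standard-\(t\)-structure analogue of \ref{item:C6}, which is not among \ref{item:C1}--\ref{item:C7}, and one would naturally prove it from a lissity stratification anyway.

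What does not work is the self-contained version of Step~2, which is the only place where something must actually be proved. The support of \(\Cone(F\to Rj_*j^*F)\) is essentially the whole complement \(Z\) of the chosen open \(U\), whether or not \(F\) is lisse along \(Z\). For instance, if \(F\) is lisse near a smooth divisor \(Z\), purity gives \(i^*R^1j_*j^*F\simeq F|_Z(-1)\neq0\). So this cone only recovers \(Z\). Choosing a dense open \(U\) inside the lisse locus whose complement has bounded degree is precisely the content of the lemma, so the argument is circular.

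Your alternative criterion also fails, because constancy of stalk ranks does not characterize the lisse locus. The sheaf \(j_!\ol\Ql\oplus i_{0,*}\ol\Ql\) on \(\bbA^1\), with \(j\colon\Gm\hookrightarrow\bbA^1\), has rank one at every point but is not lisse at \(0\). In any case, ranks at generic points of generic linear sections only see generic behaviour. Hence the proof stands or falls with the citation of Sawin's stratification lemma, which should be applied to \(K\) itself rather than to each \(\cH^i(K)\).
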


\begin{proof}
For integral \(C\), Lemma~6.26 of \cite{SFFK23} gives a lissity stratification with bounds depending only
on \((C,N)\) for the degrees of its components and the complexities of their immersions.  Refining into
connected components preserves such a bound.  The restriction estimate \ref{item:C3} and the
generic-rank bound \cite[Prop.~3.4]{SFFK23} then bound the displayed rank sum on every stratum.  Taking the
maximum of these bounds and applying the nondecreasing-majorant convention defines \(H_N\).
\end{proof}

For each \(N\ge0\), fix a function \(H_N\) as in Lemma~\ref{lem:lissity-complexity}.

\begin{lem}
\label{lem:eigenvalue-strata}
Let \(w:Y\hookrightarrow\bbP^N_\Omega\) be a locally closed embedding, let
\(K\in\Db(Y,\ol\Ql)\), and let \(\tau\in\operatorname{Aut}_{\Db(Y)}(K)\).  Put
\[
        \Eig_Y(K,\tau)
        =
        \bigcup_{\substack{i\in\mathbb Z\\ \bar y\to Y\ \mathrm{geometric}}}
        \Eig\bigl(\cH^i(\tau)_{\bar y}\bigr),
\]
where \(\Eig\bigl(\cH^i(\tau)_{\bar y}\bigr)\) denotes the set of eigenvalues on
\(\cH^i(K)_{\bar y}\).  Then
\[
        \#\Eig_Y(K,\tau)\le H_N(c_w(K))^2.
\]
\end{lem}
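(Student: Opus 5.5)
The plan is to apply the lissity stratification of Lemma~\ref{lem:lissity-complexity} to \(K\) and show that on each stratum the eigenvalue set is locally constant. Put \(C=c_w(K)\) and choose a stratification \(\Supp K=\bigsqcup_{\alpha\in I}W_\alpha\) by smooth connected locally closed subvarieties such that every \(\cH^i(K)|_{W_\alpha}\) is lisse, with \(\#I\le H_N(C)\) and \(\sum_i\operatorname{rk}\cH^i(K)|_{W_\alpha}\le H_N(C)\) for each \(\alpha\). Geometric points outside \(\Supp K\) have zero stalks and contribute no eigenvalues, so every eigenvalue in \(\Eig_Y(K,\tau)\) comes from a stratum \(W_\alpha\).

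Next, fix \(\alpha\) and \(i\). The automorphism \(\tau\) induces an automorphism \(\cH^i(\tau)\) of the cohomology sheaf \(\cH^i(K)\), and by restriction an endomorphism \(\sigma\) of the lisse sheaf \(\mathcal F=\cH^i(K)|_{W_\alpha}\). Since \(W_\alpha\) is connected, \(\mathcal F\) corresponds to a continuous representation \(V\) of \(\pi_1(W_\alpha,\bar y_0)\), and \(\sigma\) corresponds to a \(\pi_1\)-equivariant endomorphism of \(V\). For any geometric point \(\bar y\) of \(W_\alpha\), a choice of étale path identifies \(\mathcal F_{\bar y}\) with \(V\) compatibly with \(\sigma\). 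Hence the characteristic polynomial of \(\sigma_{\bar y}\) does not depend on \(\bar y\). Equivalently, the coefficients of \(\det(T-\sigma)\) are global sections of the constant sheaf on the connected scheme \(W_\alpha\), hence constant. So the eigenvalues occurring on \(W_\alpha\) in degree \(i\) are the eigenvalues of a single endomorphism of a space of dimension \(\operatorname{rk}\mathcal F\).

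Now count. For each \(\alpha\), the number of distinct eigenvalues is at most \(\sum_i\operatorname{rk}\cH^i(K)|_{W_\alpha}\le H_N(C)\). Summing over the at most \(H_N(C)\) strata gives \(\#\Eig_Y(K,\tau)\le H_N(C)^2\), which is the asserted bound since \(C=c_w(K)\).

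The only delicate step is the local constancy: one must check that restricting the automorphism \(\tau\) of the complex to a cohomology sheaf on a stratum gives a sheaf endomorphism compatible with specialization along étale paths. This follows from functoriality of \(\cH^i\) and of restriction, together with the equivalence between lisse \(\ol\Ql\)-sheaves on a connected scheme and \(\pi_1\)-representations in the pro-étale formalism of \S\ref{sec:sheaves}. Because the argument only counts eigenvalues of an endomorphism of a finite-dimensional space, the bound is uniform in \(\Omega\) and \(\ell\), as \(H_N\) is.
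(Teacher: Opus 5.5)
Your proposal is correct and follows essentially the same route as the paper: take the lissity stratification of Lemma~\ref{lem:lissity-complexity}, note that on each connected stratum the characteristic polynomial of \(\cH^i(\tau)_{\bar y}\) is independent of \(\bar y\), and bound the eigenvalues by (number of strata) \(\times\) (rank sum per stratum) \(\le H_N(c_w(K))^2\). Your extra justification of the local constancy via \(\pi_1\)-representations on the connected strata just spells out the step the paper states in one line.
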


\begin{proof}
Choose a stratification as in Lemma~\ref{lem:lissity-complexity}.  On each connected stratum, the
characteristic polynomial of \(\cH^i(\tau)_{\bar y}\) is independent of \(\bar y\) for every \(i\).  The
number of eigenvalues contributed by the stratum is at most the sum of the ranks of the \(\cH^i(K)\).
Lemma~\ref{lem:lissity-complexity} bounds this sum and the number of strata by \(H_N(c_w(K))\).
\end{proof}

\subsection{Tame ramification and Kummer sheaves}
\label{sec:tame-kummer}

Let \(T/\Omega\) be a torus over an algebraically closed field.  Choose a smooth compactification
\(T\hookrightarrow\overline T\) whose boundary is a normal-crossings divisor.  SGA~1 defines tame
ramification along the boundary at its generic points and defines \(\pi_1^{\mathrm t}(T)\) as the quotient
of \(\pi_1(T)\) classifying finite \'etale covers with this ramification property
\cite[Exp.~XIII, 2.1.1, 2.1.3, and Rem.~2.3(c)]{SGA1}.  Abhyankar's lemma shows that this condition is
independent of the choice of a smooth compactification with normal-crossings boundary
\cite[Exp.~XIII, Prop.~5.2]{SGA1}, \cite[\S1.9]{GabberLoeser1996}.  Consequently, the resulting quotient
\(\pi_1^{\mathrm t}(T)\) does not depend on the compactification.  When \(\Omega\) has positive
characteristic, Gabber--Loeser call the rank-one lisse sheaf attached to a continuous character of
\(\pi_1^{\mathrm t}(T)\) a Kummer sheaf \cite[\S\S1.3--1.4]{GabberLoeser1996}.

When \(\operatorname{char}\Omega=p>0\), put
\(\widehat{\mathbb Z}^{(p')}=\prod_{\ell'\ne p}\mathbb Z_{\ell'}\), where the product is over primes
\(\ell'\ne p\), and in characteristic zero put
\(\widehat{\mathbb Z}^{(p')}=\widehat{\mathbb Z}\).  For \(n\) invertible in \(\Omega\), the isogeny
\([n]:T\to T\) is a connected Galois cover with group \(T[n]=X_*(T)\otimes\mu_n\), where
\(X_*(T)=\operatorname{Hom}(\Gm,T)\) is the cocharacter lattice.  Kummer theory identifies
\(\pi_1^{\mathrm t}(T,1)\) with the limit of these Galois groups over the isogenies \([n]\) for which
\(n\) is invertible in \(\Omega\), giving a canonical isomorphism
\[
        \pi_1^{\mathrm t}(T,1)
        \simeq X_*(T)\otimes_{\mathbb Z}\widehat{\mathbb Z}^{(p')}(1)
\]
\cite[\S1.3]{GabberLoeser1996}.  In characteristic zero the tame fundamental group is the full \'etale
fundamental group and the same identification is classical.  The identification is functorial in
homomorphisms of tori, since \(f\circ[n]=[n]\circ f\) induces \(X_*(f)\otimes\operatorname{id}_{\mu_n}\)
on the Galois groups.  The coordinate projections identify \(X_*(\Gm^d)\) with
\(\bigoplus_{j=1}^dX_*(\Gm)\), so they induce an isomorphism
\begin{equation}
\label{eq:tame-product}
        \pi_1^{\mathrm t}(\Gm^d/\Omega,1)
        \xrightarrow{\ \sim\ }
        \prod_{j=1}^d\pi_1^{\mathrm t}(\Gm/\Omega,1).
\end{equation}
In particular, \(\pi_1^{\mathrm t}(\Gm^d,1)\) is abelian.  In positive characteristic \(p\), it is
pro-prime-to-\(p\).

We now return to the standard compactification \(j:\Gm/\bar k\hookrightarrow\bbP^1_{\bar k}\).  For
\(x\in\{0,\infty\}\), write
\(\bbP^1_{(x)}=\operatorname{Spec}\mathcal O^{\mathrm{sh}}_{\bbP^1,x}\), with generic point
\(\eta_x\) and geometric generic point \(\bar\eta_x\).  Put
\[
        I_x=\operatorname{Gal}(\bar\eta_x/\eta_x),
        \qquad
        I_x^{\mathrm t}=I_x/P_x,
\]
where the wild inertia subgroup \(P_x\) is the kernel of the action of \(I_x\) on the
maximal tamely ramified extension of the fraction field of
\(\mathcal O^{\mathrm{sh}}_{\bbP^1,x}\).  Applied to the trait \(\bbP^1_{(x)}\) with the
divisor \(\{x\}\), \cite[Exp.~XIII, Cor.~5.3]{SGA1} identifies \(I_x^{\mathrm t}\)
canonically with \(\widehat{\mathbb Z}^{(p')}(1)\), through the action on compatible systems of
prime-to-\(p\) roots of a uniformizer.  Choose a topological generator \(\tau_x\) of
\(I_x^{\mathrm t}\).  The primary decomposition of \(\widehat{\mathbb Z}^{(p')}(1)\) gives a canonical
product decomposition
\[
        I_x^{\mathrm t}=J_x\times\Gamma_x,
        \qquad
        \tau_x=(\tau_{x,J},\gamma_x),
\]
where \(J_x\) is the product of the pro-\(\ell'\) factors over the primes \(\ell'\neq p,\ell\).  The
factor \(\Gamma_x\) is isomorphic to \(\mathbb Z_\ell(1)\), and \(\gamma_x\) is a topological generator
of \(\Gamma_x\).

The scheme map \(\eta_x\to\Gm\) induces
\(I_x\to\pi_1(\Gm,\bar\eta_x)\) and, after a choice of path from \(\bar\eta_x\) to
\(1\), a homomorphism \(I_x\to\pi_1(\Gm,1)\).  Its composite with the projection to
\(\pi_1^{\mathrm t}(\Gm,1)\) is independent of the path, because the target is abelian, and factors
through \(I_x^{\mathrm t}\), because the image of the pro-\(p\) group \(P_x\) in a
pro-prime-to-\(p\) group is trivial.  For a continuous character \(\lambda\) of
\(\pi_1^{\mathrm t}(\Gm,1)\), let \(\lambda_x\) denote the restriction of \(\lambda\) to
\(I_x^{\mathrm t}\) along this homomorphism.

\begin{lem}
\label{lem:kummer-local-monodromy}
For each \(x\in\{0,\infty\}\), the homomorphism
\[
        I_x^{\mathrm t}\longrightarrow\pi_1^{\mathrm t}(\Gm,1)
\]
just defined is an isomorphism.  Consequently, restriction of characters gives an isomorphism
\[
        \operatorname{Hom}_{\mathrm{cts}}\bigl(\pi_1^{\mathrm t}(\Gm,1),\ol\Ql^\times\bigr)
        \xrightarrow{\ \sim\ }
        \operatorname{Hom}_{\mathrm{cts}}(I_x^{\mathrm t},\ol\Ql^\times),
\]
and the map \(\lambda\mapsto\lambda_x(\tau_x)\) is injective on continuous characters of
\(\pi_1^{\mathrm t}(\Gm,1)\).
\end{lem}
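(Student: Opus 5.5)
The plan is to reduce the claim to the compatibility of Kummer theory with the tame inertia identification, using the canonical descriptions already fixed in \S\ref{sec:tame-kummer}.

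First, via the identification \(\pi_1^{\mathrm t}(\Gm,1)\simeq X_*(\Gm)\otimes\widehat{\mathbb Z}^{(p')}(1)=\widehat{\mathbb Z}^{(p')}(1)\), I will describe \(\pi_1^{\mathrm t}(\Gm,1)\) as the inverse limit of the Galois groups \(\mu_n\) of the Kummer covers \([n]\colon\Gm\to\Gm\), \(z\mapsto z^n\), for \(n\) prime to \(p\). Second, I will compute the composite \(I_x\to\pi_1^{\mathrm t}(\Gm,1)\to\mu_n\) directly. This composite is the action of \(I_x\) on the fibre of \([n]\) over \(\bar\eta_x\), which is the set of \(n\)-th roots of the coordinate \(z\). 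For \(x=0\), \(z=t_0\) is a uniformizer of \(\mathcal O^{\mathrm{sh}}_{\bbP^1,0}\). For \(x=\infty\), \(z=t_\infty^{-1}\), and the roots of \(z\) are inverses of the roots of the uniformizer \(t_\infty\). In both cases \(\sigma\in I_x\) acts by \(\sigma(z^{1/n})/z^{1/n}\), and this equals \(\theta_n(\sigma)^{\pm1}\), where \(\theta_n\colon I_x\to\mu_n\) is the character recording the action on \(n\)-th roots of the uniformizer. By \cite[Exp.~XIII, Cor.~5.3]{SGA1}, the limit of the \(\theta_n\) is exactly the canonical isomorphism \(I_x^{\mathrm t}\simeq\widehat{\mathbb Z}^{(p')}(1)\). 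Taking the limit over \(n\), the homomorphism \(I_x^{\mathrm t}\to\pi_1^{\mathrm t}(\Gm,1)\) is therefore the identity of \(\widehat{\mathbb Z}^{(p')}(1)\) for \(x=0\) and its negation for \(x=\infty\). Both are isomorphisms.

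The only delicate point is that, since the target is abelian, the choice of path from \(\bar\eta_x\) to \(1\) changes nothing; the paper already noted this. One also has to check that the Kummer covers are cofinal among tame covers, but that is exactly the content of the cited description of \(\pi_1^{\mathrm t}\). So the main work, such as it is, is the sign bookkeeping at \(\infty\) and matching the two Kummer-theoretic normalizations.

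The consequences then follow formally. An isomorphism of profinite groups induces a bijection on continuous characters, which gives the isomorphism of \(\operatorname{Hom}_{\mathrm{cts}}\) groups. For injectivity, suppose \(\lambda_x(\tau_x)=\lambda'_x(\tau_x)\). Then \(\lambda_x\) and \(\lambda'_x\) are continuous characters agreeing on the subgroup generated by \(\tau_x\). That subgroup is dense because \(\tau_x\) is a topological generator, and \(\ol\Ql^\times\) is Hausdorff, so \(\lambda_x=\lambda'_x\). By the bijection just obtained, \(\lambda=\lambda'\).
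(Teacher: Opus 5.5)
Your proposal is correct and follows the paper's argument. Both proofs identify \(I_x^{\mathrm t}\) and \(\pi_1^{\mathrm t}(\Gm,1)\) with \(\widehat{\mathbb Z}^{(p')}(1)\) through Kummer theory, using the uniformizers \(t\) at \(0\) and \(t^{-1}\) at \(\infty\), to get the identity and inversion, and then deduce the character statements because a continuous character of a procyclic group is determined by its value at a topological generator. The only difference is that you check the identity and inversion directly on the Kummer covers \([n]\), whereas the paper cites SGA~1, Exp.~XIII, Cor.~2.12 for \(\bbP^1\setminus\{0,\infty\}\).
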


\begin{proof}
Apply \cite[Exp.~XIII, Cor.~2.12]{SGA1} to
\(\Gm=\bbP^1\setminus\{0,\infty\}\).  In the local identifications of
\cite[Exp.~XIII, Cor.~5.3]{SGA1}, use the uniformizers \(t\) at \(0\) and \(t^{-1}\) at \(\infty\).
With respect to these identifications and the global Kummer identification above, the maps at \(0\) and
\(\infty\) are the identity and inversion, respectively, on \(\widehat{\mathbb Z}^{(p')}(1)\).  Hence
both maps are isomorphisms.  The assertions about characters follow because a continuous character of a
procyclic group is determined by its value at a topological generator.
\end{proof}

\begin{lem}
\label{lem:kummer-complexity}
For every \(d\ge1\), there is a constant \(\kappa_d\), depending only on \(d\), such that
\[
        \cu(\cL)\le\kappa_d
\]
for every algebraically closed field \(\Omega\), every prime \(\ell\) invertible in \(\Omega\), and every
rank-one lisse \(\ol\Ql\)-sheaf \(\cL\) on \(\Gm^d/\Omega\) whose monodromy character factors through
\(\pi_1^{\mathrm t}(\Gm^d/\Omega,1)\).
\end{lem}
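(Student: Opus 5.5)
The plan is to bound \(\cu(\cL)\) directly from the definition \ref{item:C1}: compute the Betti numbers of generic linear sections of \(u_{d,!}\cL\) and show that they agree with those of the constant sheaf \(\ol\Ql\) on the same sections.  This reduces the statement to \(c(u_d)\), which Lemma~\ref{lem:standard-geometric-complexity} bounds.

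First reduce to \(d=1\).  By \eqref{eq:tame-product}, a character of \(\pi_1^{\mathrm t}(\Gm^d,1)\) is a product of characters of the factors, so \(\cL\simeq\cL_1\boxtimes\cdots\boxtimes\cL_d\) with each \(\cL_j\) a tame rank-one sheaf on \(\Gm\).  Since \(u_d\) is built from the Segre embedding compatibly with \(\Gm^d=\Gm\times\Gm^{d-1}\), iterating \ref{item:C5} gives
\[
 \cu(\cL)\ll \prod_j c_{u_1}(\cL_j),
\]
with the projection complexities bounded by \(D_d^{\mathrm{std}}\) from Lemma~\ref{lem:standard-geometric-complexity}.  It therefore suffices to bound \(c_{u_1}(\cL_1)\) uniformly.

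For \(d=1\), the embedding \(u_1\) is \(\Gm\hookrightarrow\bbP^1\), so \(\cu(\cL_1)=c(j_!\cL_1)\) on \(\bbP^1\) with \(m\in\{0,1\}\).  For \(m=0\), a generic point lies in \(\Gm\), and the stalk has total dimension \(1\).  For \(m=1\), the sum is \(\sum_i h^i_c(\Gm,\cL_1)\).  If \(\cL_1\) is trivial, this equals \(2\).  If \(\cL_1\) is nontrivial, the Euler characteristic vanishes by Grothendieck--Ogg--Shafarevich, since \(\cL_1\) is tame with Swan conductors \(0\) and \(\chi_c(\Gm)=0\).  Moreover \(\HH^0_c=0\), and \(\HH^2_c\) is the coinvariants, which are zero.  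Hence the sum is \(0\).  In every case \(c_{u_1}(\cL_1)\le 2\).

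Alternatively, and uniformly in \(d\), one can argue directly.  A generic linear section \(\Lambda\cap\Gm^d\) is a smooth variety, and \(\cL|_{\Lambda}\) is tame rank one, so Deligne--Illusie type Euler characteristic invariance for tame sheaves gives \(\chi_c(\Lambda\cap \Gm^d,\cL)=\chi_c(\Lambda\cap\Gm^d,\ol\Ql)\).  Individual Betti numbers, however, are harder to control this way.  I would therefore rely on the product reduction above, where the only delicate point is the bookkeeping of Segre embeddings and the uniformity of the constants in \ref{item:C5} in \(\Omega\) and \(\ell\).  The main thing to check is that the implied constant in \ref{item:C5}, iterated \(d-1\) times, depends only on \(d\).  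It does, since all ambient dimensions are \(2^r-1\) with \(r\le d\).
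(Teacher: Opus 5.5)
Your proof is correct and is essentially the paper's: you split \(\cL\) as an external product using \eqref{eq:tame-product}, bound the product by \ref{item:C5} and Lemma~\ref{lem:standard-geometric-complexity}, and for \(d=1\) obtain \(\cu(\cL)\in\{1,2\}\) from the vanishing of \(\HH^*_c(\Gm,\cL)\) for nontrivial tame \(\cL\). The paper cites the curve-complexity formula \cite[Th.~7.3(1)]{SFFK23} at this last step, whereas you read the value off \ref{item:C1} directly, and your opening claim that the Betti numbers agree with those of \(\ol\Ql\) is false (for nontrivial \(\cL\) on \(\Gm\) they all vanish), though the argument you actually carry out never uses it.
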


\begin{proof}
Let \(\lambda\) be the monodromy character of \(\cL\).  The product formula \eqref{eq:tame-product}
decomposes \(\lambda\) as \(\prod_{j=1}^d\lambda_j\circ\operatorname{pr}_j\), where each \(\lambda_j\)
is a continuous character of \(\pi_1^{\mathrm t}(\Gm/\Omega,1)\).  Hence
\(\cL\simeq\boxtimes_{j=1}^d\cL_{\lambda_j}\).

Suppose that \(d=1\) and that \(\cL\) is nontrivial.  A lisse sheaf on the connected nonproper curve
\(\Gm\) has no nonzero sections with finite support, so \(H^0_c(\Gm,\cL)=0\).  Poincar\'e duality
identifies \(H^2_c(\Gm,\cL)\) with a Tate twist of the dual of \(H^0(\Gm,\cL^\vee)\), which vanishes
because \(\cL^\vee\) is nontrivial.  The restrictions of \(\lambda\) to the inertia groups \(I_0\) and
\(I_\infty\) are tamely ramified.  Both Swan conductors of \(\cL\) therefore vanish, and the
Grothendieck--Ogg--Shafarevich formula \cite[\textit{Sommes trigonom\'etriques}, (3.2.1)]{SGA45}
gives \(\chi_c(\Gm,\cL)=0\), so \(H^1_c(\Gm,\cL)=0\) as well.  The curve-complexity formula
\cite[Th.~7.3(1)]{SFFK23}, applied to the degree-one embedding \(u_1\), gives \(\cu(\cL)=1\), and it
gives \(\cu(\ol\Ql)=2\) for the trivial local system.

For general \(d\), the external-product estimate \ref{item:C5}, together with the uniform bounds of
Lemma~\ref{lem:standard-geometric-complexity}, bounds
\(\cu(\boxtimes_{j=1}^d\cL_{\lambda_j})\) by a constant \(\kappa_d\).
\end{proof}

\subsection{Finite-field characters and products}
\label{sec:characters}

We retain the notation \(\widehat{\Gm^d}\), \(\cL_\chi\), \(A_\chi\),
\(\widehat{\Gm^d(k_n)}\), and \(Z(k_n)\) introduced in \S\ref{sec:statements}.  The character scheme
is that constructed by Gabber--Loeser \cite[\S3.2]{GabberLoeser1996} and used by
Forey--Fres\'an--Kowalski \cite{ForeyFresanKowalski2021}.  Its \(\ol\Ql\)-points are, by construction,
the continuous characters of \(\pi_1^{\mathrm t}(\Gm^d,1)\).  We identify a character with the
corresponding \(\ol\Ql\)-point of the character scheme.  The associated sheaf \(\cL_\chi\) therefore has
monodromy factoring through \(\pi_1^{\mathrm t}(\Gm^d,1)\), and hence is tamely ramified along the
boundary of \(\Gm^d\hookrightarrow(\bbP^1)^d\).

Let \(F\) denote the \(q\)-power Frobenius endomorphism of \(\Gm^d/k\).  After base change to \(k_n\), the
Lang isogeny is
\[
        L_n=F^n\cdot\operatorname{inv}\colon\Gm^d/k_n\longrightarrow\Gm^d/k_n,
        \qquad
        (x_1,\ldots,x_d)\longmapsto(x_1^{q^n-1},\ldots,x_d^{q^n-1}).
\]
It is a finite \'etale Galois covering with group \(\Gm^d(k_n)\).  It attaches to each
\(\chi\in\widehat{\Gm^d(k_n)}\) a Kummer sheaf \(\cL_{\chi,0}\) on \(\Gm^d/k_n\) whose \(k_n\)-trace
function is \(\chi\).  Its base change to \(\Gm^d/\bar k\) is the Kummer sheaf \(\cL_\chi\).  The Lang
covering remains connected after base change to \(\bar k\), so the geometric fundamental group of
\(\Gm^d\) surjects onto \(\Gm^d(k_n)\).  Geometric base change therefore defines an injective homomorphism
\[
        \widehat{\Gm^d(k_n)}\lhook\joinrel\longrightarrow\widehat{\Gm^d}(\ol\Ql),
        \qquad
        \chi\longmapsto[\cL_\chi].
\]

For \(a,b\ge1\), the standard product decomposition gives
\[
        \widehat{\Gm^{a+b}(k_n)}
        =\widehat{\Gm^a(k_n)}\times\widehat{\Gm^b(k_n)},
        \qquad
        |\widehat{\Gm(k_n)}|=q^n-1 .
\]
For characters \(\lambda,\eta\) of the two factors,
\(\cL_{(\lambda,\eta)}\simeq\operatorname{pr}_1^*\cL_\lambda\otimes\operatorname{pr}_2^*\cL_\eta\).

\subsection{Negligible objects}
\label{sec:negligible}

Following \cite[\S3.6]{GabberLoeser1996}, a perverse sheaf \(N\) on \(\Gm^d\) is \emph{negligible} if
\(\chi(\Gm^d,N)=0\).\footnote{Forey--Fres\'an--Kowalski define negligibility by requiring that the set
\(\bigl\{\chi\in\widehat{\Gm^d}(\ol\Ql)\mid\HH^0(\Gm^d,N_\chi)=0\bigr\}\) be generic in the sense of
\S\ref{sec:equidistribution} \cite[Def.~3.4]{ForeyFresanKowalski2021}, for perverse sheaves
defined over a finite extension of \(k\).  For such \(N\) the two definitions agree.  For generic
\(\lambda\), generic vanishing and invariance of the Euler--Poincar\'e characteristic under Kummer
twists give \(\dim\HH^0(\Gm^d,N_\lambda)=\chi(\Gm^d,N)\).  See
\cite[Prop.~3.22(2)--(3)]{ForeyFresanKowalski2021}.}  Let
\(\Perv_{\mathrm{neg}}(\Gm^d)\) be the Serre subcategory of negligible perverse sheaves
\cite[Lem.~3.6.2]{GabberLoeser1996}.  A complex \(K\in\Db(\Gm^d)\) is \emph{negligible} if
\({}^pH^i(K)\) is negligible for every \(i\).  The negligible complexes form a thick triangulated
subcategory of \(\Db(\Gm^d)\)
\cite[Prop.~3.6.1(i)]{GabberLoeser1996}.

For a perverse sheaf \(P\), put
\[
 \mathcal N(P)=\bigl\{\chi\in\widehat{\Gm^d}(\ol\Ql):
 R\Gamma_c(\Gm^d,P_\chi)=0\ \text{and}\ R\Gamma(\Gm^d,P_\chi)=0\bigr\}.
\]
For a complex \(K\), put
\[
        \mathcal N(K)=\bigcap_{i\in\mathbb Z}\mathcal N({}^pH^iK).
\]
A character \(\chi\) lies in \(\mathcal N(K)\) exactly when both cohomologies of every perverse
cohomology sheaf of \(K\) vanish at \(\chi\).\footnote{The corresponding definition in
\cite[\S3.4]{ForeyFresanKowalski2021}, stated for negligible complexes, displays a union in place of
this intersection.  The union is
a misprint.  It would make \(\mathcal N(K)\) the full character set, because \({}^pH^iK\) vanishes
for all but finitely many \(i\) and \(\mathcal N(0)\) is the full character set.  The proof of
\cite[Lem.~3.28]{ForeyFresanKowalski2021} uses the intersection condition.}

Following \cite[Def.-Prop.~3.7.2]{GabberLoeser1996}, let \(\Perv_{\mathrm{int}}(\Gm^d)\) be the full
subcategory of perverse sheaves with no nonzero negligible subobject or quotient.  Localization gives an
equivalence
\(\Perv_{\mathrm{int}}(\Gm^d)\xrightarrow{\sim}\Perv(\Gm^d)/\Perv_{\mathrm{neg}}(\Gm^d)\)
\cite[Def.-Prop.~3.7.2]{GabberLoeser1996}.

Following \cite[\S3.8]{ForeyFresanKowalski2021}, let
\(\Perv_{\mathrm{neg}}^{\mathrm{ari}}(\Gm^d/k)\) and
\(\Perv_{\mathrm{int}}^{\mathrm{ari}}(\Gm^d/k)\) be the full subcategories of
\(\Perv(\Gm^d/k,\ol\Ql)\) consisting of the \(M_0\) for which \(M_{0,\bar k}\) belongs to
\(\Perv_{\mathrm{neg}}(\Gm^d)\) and \(\Perv_{\mathrm{int}}(\Gm^d)\), respectively.  The first is a Serre
subcategory.  Put
\[
        \mathcal P^{\mathrm{ari}}(\Gm^d/k)
        =\Perv(\Gm^d/k)/\Perv_{\mathrm{neg}}^{\mathrm{ari}}(\Gm^d/k),
\]
and let \(q^{\mathrm{ari}}\) be the localization functor.  It induces an equivalence
\[
        \Perv_{\mathrm{int}}^{\mathrm{ari}}(\Gm^d/k)
        \xrightarrow{\ \sim\ }
        \mathcal P^{\mathrm{ari}}(\Gm^d/k).
\]

\subsection{Convolution}
\label{sec:convolution}

Let \(T\) denote either \(\Gm^d/k\) or \(\Gm^d/\bar k\), and let
\(m_T\colon T\times T\to T\) be multiplication.  For \(A,B\in\Db(T)\), we write
\(A\ast_!B=R(m_T)_!(A\boxtimes B)\) and \(A\ast_*B=R(m_T)_*(A\boxtimes B)\).  The transformation
\eqref{eq:forget-supports} for \(m_T\) gives the canonical morphism
\begin{equation}
\label{eq:convolution-comparison}
        c_{A,B}=\varphi_{m_T}(A\boxtimes B)\colon A\ast_!B\longrightarrow A\ast_*B.
\end{equation}
For \(T=\Gm^d/\bar k\), its twist by \(\cL_\chi\) is denoted \(c_{A,B,\chi}\).
If \(K\) is negligible, then \(K\ast_!A\) and \(K\ast_*A\) are negligible for every
\(A\in\Db(\Gm^d)\).  Moreover, \(\Cone(c_{A,B})\) is negligible for all \(A,B\in\Db(\Gm^d)\).  If \(A\)
and \(B\) are perverse, then
\({}^pH^i(A\ast_!B)\) and \({}^pH^i(A\ast_*B)\) are negligible for every \(i\neq0\)
\cite[Prop.~3.6.4]{GabberLoeser1996}.

Let \(\Db(\Gm^d)_{\mathrm{loc}}\) be the Verdier quotient of \(\Db(\Gm^d)\) by the full thick triangulated
subcategory of negligible complexes.  The perverse \(t\)-structure descends to this quotient, and its
heart is canonically equivalent to \(\Perv(\Gm^d)/\Perv_{\mathrm{neg}}(\Gm^d)\)
\cite[Prop.~3.6.1]{GabberLoeser1996}.  The products \(\ast_!\) and \(\ast_*\) induce the same
\(t\)-biexact product \(\ast_{\mathrm{loc}}\) on \(\Db(\Gm^d)_{\mathrm{loc}}\)
\cite[Def.-Prop.~3.6.5]{GabberLoeser1996}.  For perverse \(A\) and \(B\), this product in the heart is
represented by the localization of either \({}^pH^0(A\ast_!B)\) or \({}^pH^0(A\ast_*B)\).

Transporting this product through the equivalence of \S\ref{sec:negligible} defines the
\emph{internal convolution} \(\ast_{\mathrm{int}}\) on \(\Perv_{\mathrm{int}}(\Gm^d)\).\footnote{The
name follows \cite[Def.~3.11]{ForeyFresanKowalski2021}.  Gabber--Loeser call this product the
intermediate convolution, and the remark closing \cite[\S3.7]{GabberLoeser1996} identifies it for
\(d=1\) with Katz's middle convolution.}  For a perverse
sheaf \(P\), let \(P_{\mathrm{int}}\) denote the image of its localization under the quasi-inverse
equivalence.  For \(A,B\in\Perv_{\mathrm{int}}(\Gm^d)\),
\[
        A\ast_{\mathrm{int}}B
        \simeq\bigl({}^pH^0(A\ast_!B)\bigr)_{\mathrm{int}}
        \simeq\bigl({}^pH^0(A\ast_*B)\bigr)_{\mathrm{int}}
\]
canonically \cite[Def.-Prop.~3.7.3]{GabberLoeser1996}.

Following \cite[\S3.8]{ForeyFresanKowalski2021}, let
\(A_0,B_0\in\Perv(\Gm^d/k,\ol\Ql)\).  The two objects
\[
 q^{\mathrm{ari}}\bigl({}^pH^0(A_0\ast_!B_0)\bigr)
 \quad\text{and}\quad
 q^{\mathrm{ari}}\bigl({}^pH^0(A_0\ast_*B_0)\bigr)
\]
depend functorially only on \(q^{\mathrm{ari}}(A_0)\) and \(q^{\mathrm{ari}}(B_0)\), and they are
canonically isomorphic.  Their common product makes
\(\mathcal P^{\mathrm{ari}}(\Gm^d/k)\) a rigid symmetric \(\ol\Ql\)-linear tensor category.
Transporting it through the arithmetic equivalence in \S\ref{sec:negligible} defines
\(\ast_{\mathrm{int}}\) on \(\Perv_{\mathrm{int}}^{\mathrm{ari}}(\Gm^d/k)\).  Geometric base change is a
symmetric tensor functor
\[
 \bigl(\Perv_{\mathrm{int}}^{\mathrm{ari}}(\Gm^d/k),\ast_{\mathrm{int}}\bigr)
 \longrightarrow
 \bigl(\Perv_{\mathrm{int}}(\Gm^d),\ast_{\mathrm{int}}\bigr)
\]
\cite[\S3.8]{ForeyFresanKowalski2021}.  We use \(\ast_{\mathrm{int}}\) for both internal convolution
products, with the base determined by the category containing the objects.  In either category, the
convolution dual is \(P^\vee=\operatorname{inv}^*D(P)\) \cite[\S1.5]{ForeyFresanKowalski2021}.

For \(?\in\{!,*\}\) and \(A\in\Db(T)\), and for \(?=\mathrm{int}\) and \(A\) in either
\(\Perv_{\mathrm{int}}(\Gm^d)\) or \(\Perv_{\mathrm{int}}^{\mathrm{ari}}(\Gm^d/k)\), define the convolution
powers recursively by
\[
        A^{\ast_?1}=A,
        \qquad
        A^{\ast_?(r+1)}=A^{\ast_?r}\ast_?A.
\]

\begin{cor}
\label{cor:convolution-complexity}
For every \(d,r\ge1\) and \(?\in\{!,*,\mathrm{int}\}\), there is a nondecreasing function
\[
        F_{d,r,?}:\mathbb R_{\ge0}\longrightarrow\mathbb R_{\ge0},
\]
depending only on \(d,r,?\), such that
\[
        \cu\bigl(A^{\ast_?r}\bigr)\le F_{d,r,?}(\cu(A)).
\]
Here \(A\in\Db(\Gm^d)\) for \(?\in\{!,*\}\), and \(A\in\Perv_{\mathrm{int}}(\Gm^d)\) for
\(?=\mathrm{int}\).  The function is independent of the algebraically closed base field and of \(\ell\).
\end{cor}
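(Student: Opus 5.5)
The plan is induction on \(r\), with the recursion \(A^{\ast_?(r+1)}=A^{\ast_?r}\ast_?A\) reducing everything to a single bound for one convolution of two objects.  For \(r=1\) we take \(F_{d,1,?}(C)=C\).

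For \(?\in\{!,*\}\) we bound \(\cu(A\ast_?B)\) by a product of complexities.
\begin{itemize}
\item[(1)] First, the external product estimate \ref{item:C5} gives \(c_{u_d\boxtimes u_d}(A\boxtimes B)\ll\cu(A)\,\cu(B)\).  The implied constant involves the complexities of the two projections from \(\Gm^d\times\Gm^d\) with its Segre embedding.  These are bounded by \(D_{2d}^{\mathrm{std}}\) by Lemma~\ref{lem:standard-geometric-complexity}.
\item[(2)] Next, the pushforward estimates of \ref{item:C3}, applied to the multiplication \(m\colon(\Gm^{2d},u_d\boxtimes u_d)\to(\Gm^d,u_d)\), give
\[
        \cu(A\ast_?B)\ll c(u_d\boxtimes u_d)\,c(u_d)\,c(m)\,c_{u_d\boxtimes u_d}(A\boxtimes B).
\]
Here \(c(m)\) and the embedding complexities are again at most \(D_{2d}^{\mathrm{std}}\), since multiplication is among the standard morphisms fixed in \S\ref{sec:embeddings}.
\item[(3)] If the Segre embedding \(u_d\boxtimes u_d\) differs from \(u_{2d}\), the two ambient spaces are related by a fixed linear reembedding.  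Lemma~\ref{lem:linear-reembedding} absorbs this difference into a constant.
\end{itemize}
This yields \(\cu(A\ast_?B)\le K_d\,\cu(A)\,\cu(B)\), with \(K_d\) depending only on \(d\).  The constant is independent of \(\Omega\) and \(\ell\), because all implied constants in \ref{item:C3} and \ref{item:C5} are.  Taking \(F_{d,r+1,?}(C)=K_d\,F_{d,r,?}(C)\,C\), which is nondecreasing, completes the induction.

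For \(?=\mathrm{int}\) we need one extra step.  By the formula following \cite[Def.-Prop.~3.7.3]{GabberLoeser1996} recalled above,
\[
        A\ast_{\mathrm{int}}B\simeq\bigl({}^pH^0(A\ast_!B)\bigr)_{\mathrm{int}}.
\]
The key point is that for a perverse sheaf \(P\), the object \(P_{\mathrm{int}}\) is a subquotient of \(P\).  Indeed, the intermediate representative is obtained by removing the largest negligible subobject and then the largest negligible quotient, as in the construction of \cite[Def.-Prop.~3.7.2]{GabberLoeser1996}.  Granting this, \ref{item:C6} together with \ref{item:C4} bounds the complexity of any subquotient of \({}^pH^0(A\ast_!B)\) by a constant multiple of \(c(u_d)\,\cu(A\ast_!B)\).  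Combining this with the \(!\)-bound gives \(\cu(A\ast_{\mathrm{int}}B)\le K'_d\,\cu(A)\,\cu(B)\), and the same induction applies.

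The main obstacle is justifying the subquotient description of \(P_{\mathrm{int}}\), since the equivalence with the quotient category is defined abstractly.  I would first extract it from \cite[Def.-Prop.~3.7.2]{GabberLoeser1996}.  Perverse sheaves have finite length, so the largest negligible subobject \(N_1\subseteq P\) exists.  In \(P/N_1\) we then take the kernel \(P'\) of the map to the largest negligible quotient.  Then \(P'\) has no negligible sub- or quotient objects, since \(\Perv_{\mathrm{neg}}\) is a Serre subcategory.  Moreover \(P'\) is isomorphic to \(P\) in the localization, so \(P'\simeq P_{\mathrm{int}}\), and \(P'\) is a subquotient of \(P\), as required.
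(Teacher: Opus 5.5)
Your proof is correct and follows the paper's argument: a bilinear bound \(\cu(A\ast_?B)\le a_{d,?}\,\cu(A)\,\cu(B)\) from \ref{item:C3}, \ref{item:C5} and Lemma~\ref{lem:standard-geometric-complexity}, then induction on \(r\), and for \(\ast_{\mathrm{int}}\) the fact that it is a subquotient of \({}^pH^0(A\ast_!B)\) combined with \ref{item:C6}. Your explicit construction of \(P_{\mathrm{int}}\) as a subquotient fills in what the paper only cites from Gabber--Loeser, and your step (3) is harmless but unnecessary, since the estimates apply directly to the Segre embedding of \(\Gm^d\times\Gm^d\).
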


\begin{proof}
By \ref{item:C3}, \ref{item:C5}, and Lemma~\ref{lem:standard-geometric-complexity}, for each
\(?\in\{!,*\}\) there is a constant \(a_{d,?}\ge1\), depending only on \(d\) and \(?\), such that
\[
        \cu(A\ast_?B)\le a_{d,?}\cu(A)\cu(B).
\]
The constants are independent of the algebraically closed base field and of \(\ell\).  Induction on \(r\)
gives the result for \(!\) and \(*\).  Internal convolution is a subquotient of
\({}^pH^0(A\ast_!B)\)
\cite[Def.~3.7.1, Def.-Prop.~3.7.2]{GabberLoeser1996}.  Thus \ref{item:C6} gives the same bilinear
estimate, with a constant depending only on \(d\), for \(\ast_{\mathrm{int}}\).  A second induction on
\(r\) proves the internal case.
\end{proof}

\subsection{The Mellin transform}
\label{sec:mellin}

Gabber--Loeser \cite[\S3.3]{GabberLoeser1996} attach to \(\Gm^d\) two Mellin transforms
\(\mathscr M_!\) and \(\mathscr M_*\), with values in the bounded derived category of coherent sheaves on
\(\widehat{\Gm^d}\), together with a natural transformation \(\mathscr M_!\to\mathscr M_*\).

For \(\chi\in\widehat{\Gm^d}(\ol\Ql)\), let \(i_\chi:\operatorname{Spec}\ol\Ql\to\widehat{\Gm^d}\) be the
corresponding point and let \(Li_\chi^*\) denote derived restriction.  For
\(K,A,B\in\Db(\Gm^d)\) and using notation from \S\ref{sec:convolution}, we write
\begin{equation}
\label{eq:comparison-notation}
\begin{gathered}
        \varphi_K\colon \mathscr M_!(K)\longrightarrow\mathscr M_*(K),
        \qquad
        R\Gamma_c(\Gm^d,K_\chi)
        \xrightarrow{\varphi_{K,\chi}}
        R\Gamma(\Gm^d,K_\chi),\\
        \Theta_{A,B,\chi}
        :=\varphi_{A\ast_*B,\chi}\circ R\Gamma_c(c_{A,B,\chi})\colon
        R\Gamma_c\bigl(\Gm^d,(A\ast_!B)_\chi\bigr)
        \longrightarrow R\Gamma\bigl(\Gm^d,(A\ast_*B)_\chi\bigr).
\end{gathered}
\end{equation}

We record the properties of the Mellin transform and convolution used below.
\begin{enumerate}[label=\textup{(GL\arabic*)}, ref=\textup{(GL\arabic*)}]
\item\label{item:GL1} \emph{Fibres} \cite[Cor.~3.3.2]{GabberLoeser1996}.  One has
\(Li_\chi^*\mathscr M_!(K)\simeq
R\Gamma_c(\Gm^d,K_\chi)\) and \(Li_\chi^*\mathscr M_*(K)\simeq R\Gamma(\Gm^d,K_\chi)\).  Under these
identifications, \(Li_\chi^*(\varphi_K)=\varphi_{K,\chi}\)
\cite[Prop.~3.3.4(i)]{GabberLoeser1996}.
\item\label{item:GL2} \emph{Support of the cone} \cite[Def.~3.3.3, Prop.~3.3.4(i)]{GabberLoeser1996}.
\[
        \Delta(K)
        =\Supp\Cone\bigl(\mathscr M_!(K)\xrightarrow{\varphi_K}\mathscr M_*(K)\bigr).
\]
Its \(\ol\Ql\)-points are
\[
        \Delta(K)(\ol\Ql)
        =\bigl\{\chi\in\widehat{\Gm^d}(\ol\Ql):
        R\Gamma_c(\Gm^d,K_\chi)\xrightarrow{\varphi_{K,\chi}}
        R\Gamma(\Gm^d,K_\chi)\text{ is \emph{not} an isomorphism}\bigr\}.
\]
\item\label{item:GL3} \emph{Monoidality} \cite[Prop.~3.3.1(f)]{GabberLoeser1996}.  There are canonical
isomorphisms \(\mathscr M_!(A\ast_!B)\simeq\mathscr M_!(A)\otimes^L\mathscr M_!(B)\) and
\(\mathscr M_*(A\ast_*B)\simeq\mathscr M_*(A)\otimes^L\mathscr M_*(B)\), natural in \(A\) and \(B\).  The
fibrewise compatibility of the comparison \(\mathscr M_!\to\mathscr M_*\) with convolution used below is
recorded in Lemma~\ref{lem:comparison-monoidal}.
\item\label{item:GL4} \emph{Localization}.  The support theorem
\cite[Th.~6.1.1(b)]{GabberLoeser1996} gives
\[
        \Supp\mathscr M_!(K)=\Supp\mathscr M_*(K)
        \qquad(K\in\Db(\Gm^d)).
\]
For closed \(Z\subseteq\widehat{\Gm^d}\), let \(\Db(\Gm^d)_Z\) be the Verdier quotient of
\(\Db(\Gm^d)\) by the full triangulated subcategory of objects whose Mellin transforms are supported in
\(Z\).  Then \(K=0\) in \(\Db(\Gm^d)_Z\) if and only if
\(\Supp\mathscr M_!(K)\subseteq Z\), equivalently if and only if
\(\Supp\mathscr M_*(K)\subseteq Z\).  The perverse \(t\)-structure descends to
\(\Db(\Gm^d)_Z\), and the localization functor is \(t\)-exact
\cite[Prop.~3.6.1, Lem.~3.9.1]{GabberLoeser1996}.
The negligible complexes form a thick triangulated subcategory of \(\Db(\Gm^d)\).
\item\label{item:GL5} \emph{Localization criterion for morphisms}
\cite[Prop.~3.9.2]{GabberLoeser1996}.  Suppose that \(f:A\to B\) is a morphism in \(\Db(\Gm^d)\) such that
the composite
\[
        \mathscr M_!(A)\xrightarrow{\mathscr M_!(f)}\mathscr M_!(B)\longrightarrow\mathscr M_*(B)
\]
is an isomorphism outside a closed subset
\(Z\subseteq\widehat{\Gm^d}\), then \(f\) is an isomorphism in \(\Db(\Gm^d)_Z\), and both \(\Delta(A)\) and
\(\Delta(B)\) are contained in \(Z\).
\item\label{item:GL6} \emph{Character scheme}
\cite[\S\S3.2.2--3.2.3, Props.~A.2.2.2, A.2.2.3]{GabberLoeser1996}.  Each connected component of
\(\widehat{\Gm^d}\) is affine, noetherian, regular, and Jacobson, and its closed points are exactly
its \(\ol\Ql\)-points.
\end{enumerate}

\subsection{The tannakian formalism}
\label{sec:tannakian}

Let \(M_0\in\Perv_{\mathrm{int}}^{\mathrm{ari}}(\Gm^d/k)\) be arithmetically semisimple and let
\(M=M_{0,\bar k}\), which is semisimple in \(\Perv_{\mathrm{int}}(\Gm^d)\)
\cite[Lem.~1.28]{ForeyFresanKowalski2021}.
\begin{enumerate}[label=\textup{(T\arabic*)}, ref=\textup{(T\arabic*)}]
\item\label{item:T1} \emph{Tannakian category}
\cite[Def.~3.7.1--Th.~3.7.5]{GabberLoeser1996}, \cite[Th.~3.16]{ForeyFresanKowalski2021}.
Internal convolution makes
\(\langle M\rangle\), the subcategory of \(\Perv_{\mathrm{int}}(\Gm^d)\) generated by
\(M\) under \(\ast_{\mathrm{int}}\), convolution duals, direct sums, subquotients, and summands, a
neutral tannakian category.  Its unit object is the skyscraper sheaf \(\delta_1\) at the identity
\cite[Def.-Prop.~3.6.5, Prop.~3.7.4(ii)]{GabberLoeser1996}.

Let \(\langle M_0\rangle^{\mathrm{ari}}\) be the full tensor subcategory of
\(\Perv_{\mathrm{int}}^{\mathrm{ari}}(\Gm^d/k)\) tensor-generated by \(M_0\), and write
\(\langle M\rangle^{\mathrm{geo}}=\langle M\rangle\).  Both categories are neutral tannakian.  There are
algebraic groups \(G_M^{\mathrm{ari}}\) and \(G_M^{\mathrm{geo}}\) over \(\ol\Ql\) and tensor equivalences
\[
        \langle M_0\rangle^{\mathrm{ari}}
        \simeq\operatorname{Rep}_{\ol\Ql}\bigl(G_M^{\mathrm{ari}}\bigr),
        \qquad
        \langle M\rangle^{\mathrm{geo}}
        \simeq\operatorname{Rep}_{\ol\Ql}\bigl(G_M^{\mathrm{geo}}\bigr).
\]
These are the arithmetic and geometric tannakian groups.  Geometric base change is a tensor functor and
induces a closed immersion \(G_M^{\mathrm{geo}}\hookrightarrow G_M^{\mathrm{ari}}\)
\cite[Th.~3.30, Def.~3.31, Prop.~3.32]{ForeyFresanKowalski2021}.

\item\label{item:T2} \emph{Semisimplicity and generators}.  Since \(M\) is semisimple, its tannakian
group is reductive and \(\langle M\rangle\) is semisimple \cite[Cor.~3.19]{ForeyFresanKowalski2021}.
The self-dual object \(M\oplus M^\vee\) is a tensor generator, so every object of \(\langle M\rangle\) is
a direct summand of a finite direct sum of its internal convolution powers \cite[Prop.~3.1]{Del82},
\cite[Prop.~1.30]{ForeyFresanKowalski2021}.

\item\label{item:T3} \emph{Arithmetic objects and representations}.  If \(M_0\) is pure of weight
zero, then the category
\(\langle M_0\rangle^{\mathrm{ari}}\) is semisimple.  An irreducible algebraic representation \(\rho\)
of \(G^{\mathrm{ari}}_M\) corresponds to an arithmetically semisimple object \(N_0=\rho(M_0)\) that is
pure of weight zero \cite[Th.~3.30 and Th.~3.33]{ForeyFresanKowalski2021}.
For some \(m(\rho)\ge1\), this object is a direct summand of
\[
        L_0^{\ast_{\mathrm{int}}m(\rho)},
        \qquad
        L_0=M_0\oplus M_0^\vee,
\]
and its geometric base change is a direct summand of
\(L^{\ast_{\mathrm{int}}m(\rho)}\), where \(L=M\oplus M^\vee\)
\cite[Prop.~3.1]{Del82}, \cite[Cor.~3.19]{ForeyFresanKowalski2021}.

\item\label{item:T4} \emph{Cohomology of convolution} \cite[Lem.~1.15]{ForeyFresanKowalski2021}.
For \(A,B\in\Db(\Gm^d)\) and \(\chi\in\widehat{\Gm^d}(\ol\Ql)\), write
\[
\begin{aligned}
 \kappa_!\colon
 R\Gamma_c(\Gm^d,A_\chi)\otimes R\Gamma_c(\Gm^d,B_\chi)
   &\xrightarrow{\ \sim\ } R\Gamma_c\bigl(\Gm^d,(A\ast_!B)_\chi\bigr),\\
 \kappa_*\colon
 R\Gamma(\Gm^d,A_\chi)\otimes R\Gamma(\Gm^d,B_\chi)
   &\xrightarrow{\ \sim\ } R\Gamma\bigl(\Gm^d,(A\ast_*B)_\chi\bigr)
\end{aligned}
\]
for the K\"unneth isomorphisms.  Their compatibility with the forget-supports morphisms is
Lemma~\ref{lem:comparison-monoidal}.

\item\label{item:T5} \emph{Unramified characters and Frobenius}
\cite[Def.~3.25, Lem.~3.28, \S\S3.8--3.9]{ForeyFresanKowalski2021}.  A character
\(\chi\in X_w(M)\), where \(X_w(M)\) is as in \eqref{eq:weak-locus}, is \emph{unramified} for \(M\) if
\(\omega_\chi:N\mapsto H^0(\Gm^d,N_\chi)\) is a fibre functor on \(\langle M\rangle\).  We write
\(X(M)\subseteq X_w(M)\) for the set of unramified characters.  If
\(M_0\in\Perv_{\mathrm{int}}^{\mathrm{ari}}(\Gm^d/k)\) and \(M=M_{0,\bar k}\), define
\(X(M_0)=X(M)\).

For \(L=M\oplus M^\vee\), let \(\alpha_1=\operatorname{id}_L\) and, for \(m\ge2\), put
\[
 \alpha_m
 =c_{L^{\ast_*(m-1)},L}\circ\bigl(\alpha_{m-1}\ast_!\operatorname{id}_L\bigr)
 \colon L^{\ast_!m}\longrightarrow L^{\ast_*m},
 \qquad
 C_m=\Cone(\alpha_m).
\]
Write \(\mu_m\colon(\Gm^d)^m\to\Gm^d\) for the \(m\)-fold multiplication, so that
\(L^{\ast_!m}=R\mu_{m,!}(L^{\boxtimes m})\) and \(L^{\ast_*m}=R\mu_{m,*}(L^{\boxtimes m})\).  Then
\(\alpha_m=\varphi_{\mu_m}(L^{\boxtimes m})\).  This holds for \(m=1\), and for \(m\ge2\) it follows by
induction from Lemma~\ref{lem:forget-supports}(2) applied to
\(\mu_m=m_T\circ(\mu_{m-1}\times\operatorname{id})\).  In that identity the outer factor is
\(\varphi_{m_T}\bigl(L^{\ast_*(m-1)}\boxtimes L\bigr)=c_{L^{\ast_*(m-1)},L}\), and the inner one is
\(\alpha_{m-1}\ast_!\operatorname{id}_L\) by Lemma~\ref{lem:forget-supports-products}(1) together with
\(\varphi_{\operatorname{id}}=\operatorname{id}\) from Lemma~\ref{lem:forget-supports}(1).  The
associativity constraints match the parenthesizations of the iterated convolutions compatibly with
these morphisms, so \(\alpha_m\) is the canonical morphism \(L^{\ast_!m}\to L^{\ast_*m}\).  Every
\(C_m\) is negligible,\footnote{For \(m=2\) one has \(C_2=\Cone(c_{L,L})\), which is negligible by
\S\ref{sec:convolution}.  Suppose that \(C_{m-1}\) is negligible.  The cone of
\(\alpha_{m-1}\ast_!\operatorname{id}_L\) is \(C_{m-1}\ast_!L\), hence is negligible, and so is the
cone of \(c_{L^{\ast_*(m-1)},L}\), again by \S\ref{sec:convolution}.  The negligible complexes form
a thick triangulated subcategory by \ref{item:GL4}, so the octahedral axiom applied to the
definition of \(\alpha_m\) gives the assertion for \(C_m\).} so \(\mathcal N(C_m)\) in the sense of
\S\ref{sec:negligible} is the set defined in \cite[\S3.4]{ForeyFresanKowalski2021}, where
negligibility of the complex is part of the definition.  Hence
\cite[Lem.~3.28]{ForeyFresanKowalski2021} gives
\[
        X_w(M)\cap\bigcap_{m\ge2}\mathcal N(C_m)\subseteq X(M).
\]

Suppose that \(M_0\) is arithmetically semisimple and pure of weight zero.  For \(n\ge1\) and
\(\chi\in X(M_0)(k_n)\), geometric Frobenius \(\operatorname{Fr}_{k_n}\) acts on
\[
        \omega_\chi(N_0)=H^0(\Gm^d,N_\chi),
        \qquad N=N_{0,\bar k},
\]
for every \(N_0\in\langle M_0\rangle^{\mathrm{ari}}\).  Every such \(N_0\) is pure of weight zero, and
\(\omega_\chi(N_0)\) with its Frobenius action is pure of weight zero
\cite[Th.~3.33, \S3.9]{ForeyFresanKowalski2021}.  These actions define a tensor automorphism of
\(\omega_\chi\), hence a conjugacy class
\(\operatorname{Fr}_{M_0,k_n}(\chi)\) in \(G_M^{\mathrm{ari}}(\ol\Ql)\).  Using the isomorphism
\(\iota\) fixed in \S\ref{sec:equidistribution}, fix a maximal compact subgroup
\(K_M\subseteq G_M^{\mathrm{ari}}(\bbC)\).  Following
\cite[\S3.9]{ForeyFresanKowalski2021}, the Frobenius conjugacy class determines a unitary Frobenius
conjugacy class
\[
        \Theta_{M_0,k_n}(\chi)\in K_M^\sharp.
\]
For an algebraic representation \(\rho\) of \(G_M^{\mathrm{ari}}\), put \(N_0=\rho(M_0)\) and
\(N=N_{0,\bar k}\).  By \cite[Lem.~3.35(2)]{ForeyFresanKowalski2021}, applied over \(k_n\), the
character \(\chi\) is unramified for \(N_0\).  Hence
\(\HH^i_c(\Gm^d,N_\chi)=0\) for \(i\ne0\), and the trace identity used below is
\[
 \operatorname{Tr}\rho\bigl(\Theta_{M_0,k_n}(\chi)\bigr)
 =\operatorname{Tr}\bigl(\operatorname{Fr}_{k_n}\mid\HH^0_c(\Gm^d,N_\chi)\bigr).
\]

\end{enumerate}

\begin{lem}
\label{lem:comparison-monoidal}
With the notation of \ref{item:T4}, the following diagram commutes for \(A,B\in\Db(\Gm^d)\) and
\(\chi\in\widehat{\Gm^d}(\ol\Ql)\).
\[
\begin{tikzcd}[column sep=large]
R\Gamma_c(\Gm^d,A_\chi)\otimes R\Gamma_c(\Gm^d,B_\chi)
    \arrow[r,"\varphi_{A,\chi}\otimes\varphi_{B,\chi}"]
    \arrow[d,"\kappa_!"',"\rotatebox{90}{\(\sim\)}"]
& R\Gamma(\Gm^d,A_\chi)\otimes R\Gamma(\Gm^d,B_\chi)
    \arrow[d,"\kappa_*","\rotatebox{90}{\(\sim\)}"'] \\
R\Gamma_c\bigl(\Gm^d,(A\ast_!B)_\chi\bigr)
    \arrow[r,"\Theta_{A,B,\chi}"']
& R\Gamma\bigl(\Gm^d,(A\ast_*B)_\chi\bigr).
\end{tikzcd}
\]
\end{lem}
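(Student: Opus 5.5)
The plan is to write both Künneth isomorphisms \(\kappa_!\) and \(\kappa_*\) as composites of the same elementary steps, and to check that each step commutes with the relevant forget-supports morphism. For \(\star\in\{!,*\}\), let \(\pi\colon\Gm^d\to\operatorname{Spec}\bar k\) and \(\pi^{(2)}\colon\Gm^d\times\Gm^d\to\operatorname{Spec}\bar k\) be the structure maps. Then \(\kappa_\star\) factors as follows.
\[
R\Gamma_\star(A_\chi)\otimes R\Gamma_\star(B_\chi)
\xrightarrow{\ \sim\ }R\pi^{(2)}_\star(A_\chi\boxtimes B_\chi)
\xrightarrow{\ \sim\ }R\pi^{(2)}_\star\bigl((A\boxtimes B)\otimes m_T^*\cL_\chi\bigr)
\xrightarrow{\ \sim\ }R\pi_\star\bigl((A\ast_\star B)\otimes\cL_\chi\bigr).
\]
The first arrow is the Künneth formula. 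The second uses the character identity \(m_T^*\cL_\chi\simeq\cL_\chi\boxtimes\cL_\chi\). The third is the composition-plus-projection-formula isomorphism \(\beta_\star\) for \(\pi^{(2)}=\pi\circ m_T\) with \(\cL=\cL_\chi\) lisse. This is the form in which \cite[Lem.~1.15]{ForeyFresanKowalski2021} is proved, and we take it as the definition of \(\kappa_\star\).

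We then compare the two rows square by square. For the Künneth square, Lemma~\ref{lem:forget-supports-products} compares \(\varphi_{\pi}\otimes\varphi_\pi\) with \(\varphi_{\pi^{(2)}}(A_\chi\boxtimes B_\chi)\); this is compatibility of forget-supports with external products over a point. The middle square commutes by naturality of \(\varphi_{\pi^{(2)}}\) applied to the isomorphism \(A_\chi\boxtimes B_\chi\simeq(A\boxtimes B)\otimes m_T^*\cL_\chi\). For the last square we apply the diagram \eqref{eq:forget-supports-composition-lisse} with \(p=m_T\), \(f=\pi\), \(K=A\boxtimes B\) and \(\cL=\cL_\chi\). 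It says that \(\varphi_{\pi^{(2)}}\) corresponds, under \(\beta_!\) and \(\beta_*\), to the composite of \(R\pi_!(\varphi_{m_T}(A\boxtimes B)\otimes\operatorname{id})=R\Gamma_c(c_{A,B,\chi})\) followed by \(\varphi_\pi\) evaluated on \((A\ast_*B)_\chi\). This composite is exactly \(\Theta_{A,B,\chi}\) as defined in \eqref{eq:comparison-notation}.

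Pasting the three squares, the outer rectangle gives \(\kappa_*\circ(\varphi_{A,\chi}\otimes\varphi_{B,\chi})=\Theta_{A,B,\chi}\circ\kappa_!\), which is the claim.

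The main obstacle is the bookkeeping rather than any single square. One must confirm that the Künneth and projection isomorphisms used to define \(\kappa_\star\) are the same ones whose compatibility with \(\varphi\) is recorded in Lemmas~\ref{lem:forget-supports} and~\ref{lem:forget-supports-products}. In particular, the \(\star=*\) Künneth map over a point has to be the one built from the universal coefficient/lisse projection morphism. I would handle this by fixing compactifications \(\Gm^d\subset(\bbP^1)^d\) and \((\Gm^d)^2\subset(\bbP^1)^{2d}\). I would then check the Künneth square at the level of \(j_!\to Rj_*\), using \(j^{(2)}=j\times j\) and the compatibility \(j_!\boxtimes j_!\to Rj_*\boxtimes Rj_*\to R(j\times j)_*\) with the adjunction unit. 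Proper base change for \(\bar\pi\) then finishes that square.
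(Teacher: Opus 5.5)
Your proposal is correct and is essentially the paper's proof. You factor \(\kappa_\star\) as the external K\"unneth map, then the identification \(m^*\cL_\chi\simeq\cL_\chi\boxtimes\cL_\chi\), then \(\beta_\star\), and you paste Lemma~\ref{lem:forget-supports-products}(1), naturality, and \eqref{eq:forget-supports-composition-lisse} with \(p=m\), \(K=A\boxtimes B\); the compactification check in your last paragraph is already the proof of Lemma~\ref{lem:forget-supports-products}(1) and need not be redone.
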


\begin{proof}
Write \(f\colon \Gm^d\to\operatorname{Spec}\bar k\) for the structure morphism and
\(m\colon \Gm^d\times \Gm^d\to \Gm^d\) for multiplication.  Apply
\eqref{eq:forget-supports-composition-lisse} with \(p=m\), \(K=A\boxtimes B\), and
\(\cL=\cL_\chi\).  Under \(m^*\cL_\chi\simeq\cL_\chi\boxtimes\cL_\chi\), its upper arrow is
\(\varphi_{f\times f}(A_\chi\boxtimes B_\chi)\), and its lower composite is
\(\Theta_{A,B,\chi}\).  The external K\"unneth morphisms followed by \(\beta_!\) and \(\beta_*\) are
the maps \(\kappa_!\) and \(\kappa_*\) of \ref{item:T4}.  Lemma~\ref{lem:forget-supports-products}(1)
therefore gives the stated diagram.
\end{proof}

\section{The relative Mellin transform over a base}
\label{sec:relative-mellin}

Throughout this section, \(S\) is a quasi-projective \(\bar k\)-variety with a fixed locally closed
embedding
\begin{equation}
\label{eq:relative-base-embedding}
        v:S\hookrightarrow\bbP^{N_S}.
\end{equation}
Put \(\bbP^1_S=\bbP^1\times_{\bar k}S\) and
\(\bbG_{m,S}=\Gm\times_{\bar k}S\).  Let
\[
        w_v:\bbG_{m,S}\hookrightarrow\bbP^1\times\bbP^{N_S}\hookrightarrow\bbP^{2N_S+1}
\]
be the immersion induced by \(v\) and the Segre embedding.  Complexities on \(S\) and \(\bbG_{m,S}\)
are denoted by \(c_v\) and \(c_{w_v}\), respectively.

For \(x\in\{0,\infty\}\), with \(U_x\) and \(t_x\) as in
\S\ref{sec:embeddings}, let
\[
\begin{aligned}
 j_x&:\bbG_{m,S}\hookrightarrow U_x\times S,\\
 f_x&=t_x\circ\operatorname{pr}_1:U_x\times S\longrightarrow\bbA^1.
\end{aligned}
\]
Equip \(U_x\times S\) with the locally closed embedding
\[
        v_x:U_x\times S
        \hookrightarrow\bbP^1\times\bbP^{N_S}\hookrightarrow\bbP^{2N_S+1}
\]
induced by \(v\) and the Segre embedding.
The strict henselian traits \(\bbP^1_{(x)}\), their generic points, and the inertia notation are those
of \S\ref{sec:tame-kummer}.  Via \(t_x\), identify \(\bbP^1_{(x)}\) with \(\bbA^1_{(0)}\), and put
\((\bbP^1_{(x)})_S=\bbP^1_{(x)}\times_{\bar k}S\).  Let
\[
 q_x:(\bbP^1_{(x)})_S\longrightarrow\bbP^1_{(x)},
 \qquad
 \rho_x:(\bbP^1_{(x)})_S\longrightarrow U_x\times S
\]
be the projection and the natural base-change morphism.  Thus \(q_x\) identifies with the base change
of \(f_x\) to \(\bbA^1_{(0)}\).  Let
\[
 \iota_x:S\hookrightarrow(\bbP^1_{(x)})_S,
 \qquad
 \bar j_x:\bar\eta_x\times S\longrightarrow(\bbP^1_{(x)})_S
\]
be the special- and geometric-generic-fibre maps, and let
\(\nu_x:\eta_x\times S\to\bbG_{m,S}\) be the natural morphism.  For \(A\in\Db(\bbG_{m,S})\), put
\[
        R\Psi_x A:=R\Psi_{q_x}\bigl(\rho_x^*Rj_{x,*}A\bigr)\in\Db(S),
\]
where \(R\Psi_{q_x}\) is the nearby-cycle functor over the trait \(\bbP^1_{(x)}\)
\cite[Exp.~XIII, \S2.1]{SGA7II}.  The complex \(R\Psi_xA\) carries its continuous \(I_x\)-action, and its
cohomology sheaves are constructible
\cite[\textit{Th\'eor\`emes de finitude}, Th.~3.2]{SGA45}.  Consequently,
Lemma~\ref{lem:eigenvalue-strata} applies to every automorphism induced by an element of \(I_x\).
The diagram
\[
\begin{tikzcd}[column sep=large]
\bar\eta_x\times S \arrow[r,"\bar j_x"] \arrow[d]
&(\bbP^1_{(x)})_S \arrow[d,"\rho_x"]\\
\eta_x\times S \arrow[r] \arrow[d,"\nu_x"']
&U_x\times S\\
\bbG_{m,S}\arrow[ur,"j_x"']
\end{tikzcd}
\]
commutes.  Since \(j_x^*Rj_{x,*}A\simeq A\), it identifies the restriction of
\(\rho_x^*Rj_{x,*}A\) to \(\bar\eta_x\times S\) with
\((\nu_x^*A)|_{\bar\eta_x\times S}\).  The strict-henselian description of nearby cycles
\cite[Exp.~XIII, (2.1.2.3)]{SGA7II} therefore gives an
\(I_x\)-equivariant identification
\begin{equation}
\label{eq:strict-henselian-nearby-cycles}
        R\Psi_x A\simeq
        \iota_x^*R\bar j_{x,*}
        \bigl((\nu_x^*A)|_{\bar\eta_x\times S}\bigr).
\end{equation}

Using the complexity notation of \ref{item:C1}, put
\begin{equation}
\label{eq:relative-geometric-complexity}
 D(S,v):=
 \max\left\{
 c(w_v),\,c(a),\,c(v_x),\,c_{w_v,v_x}(j_x),\,c_{v_x,a}(f_x)
 \ \middle|\ x\in\{0,\infty\}
 \right\}.
\end{equation}
Thus \(D(S,v)\) records the complexities of the embedded varieties and morphisms used to form the
relative nearby cycles.  For \(r\ge1\), Lemma~\ref{lem:standard-geometric-complexity} bounds
\(D(\Gm^r,u_r)\) solely in terms of \(r\).  The same lemma bounds
\(D(\operatorname{Spec}\bar k,\operatorname{id}_{\bbP^0})\) by an absolute constant.  The complexity of
\(R\Psi_x A\) is bounded as follows.

\begin{lem}
\label{lem:coordinate-nearby-complexity}
For every \(N,D\in\mathbb Z_{\ge0}\), there is a constant \(g_{N,D}\ge1\) such that, whenever \(N_S=N\) and
\(D(S,v)\le D\),
\[
        c_v(R\Psi_x A)\le g_{N,D}\,c_{w_v}(A)
        \qquad(A\in\Db(\bbG_{m,S}),\ x=0,\infty).
\]
The constant is independent of \(\bar k\) and of \(\ell\).
\end{lem}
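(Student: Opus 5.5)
The plan is to write \(R\Psi_xA\) as the nearby cycles of a \emph{flat} morphism to a smooth curve, so that \ref{item:C7} applies, and to control the complexities of the intermediate objects by \ref{item:C3}. By construction \(R\Psi_xA=R\Psi_{q_x}(\rho_x^*Rj_{x,*}A)\), and \(q_x\) is the base change of \(f_x\colon U_x\times S\to\bbA^1\) to the strict henselization \(\bbA^1_{(0)}\). In the notation of \ref{item:C7}, with \(X=U_x\times S\), the curve \(\bbA^1\), the point \(\sigma=0\), and the complex \(Rj_{x,*}A\in\Db(U_x\times S)\), this gives
\[
        R\Psi_xA=\Psi_{f_x}\bigl(Rj_{x,*}A\bigr),
\]
viewed on the special fibre \(f_x^{-1}(0)=\{x\}\times S\cong S\). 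The morphism \(f_x=t_x\circ\operatorname{pr}_1\) is flat, being the base change of the smooth structure morphism \(S\to\operatorname{Spec}\bar k\) followed by the isomorphism \(t_x\). The target \(\bbA^1\) is a smooth irreducible curve with embedding \(a\), so \ref{item:C7} applies.

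The steps, in order, are as follows. First, by the \(Rf_*\) estimate of \ref{item:C3} applied to \(j_x\colon(\bbG_{m,S},w_v)\to(U_x\times S,v_x)\),
\[
        c_{v_x}(Rj_{x,*}A)\ll c(w_v)\,c(v_x)\,c_{w_v,v_x}(j_x)\,c_{w_v}(A)\le D^3c_{w_v}(A),
\]
with implied constant depending only on \(2N+1\). Second, \ref{item:C7} for \(f_x\colon(U_x\times S,v_x)\to(\bbA^1,a)\) gives
\[
        c_{(v_x)_0}\bigl(\Psi_{f_x}Rj_{x,*}A\bigr)\ll c_{v_x,a}(f_x)^2\,c_{v_x}(Rj_{x,*}A)\le D^2\,c_{v_x}(Rj_{x,*}A).
\]
Here \((v_x)_0\) is the embedding of the special fibre induced by \(v_x\). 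Third, identify this complexity with \(c_v\). The embedding \((v_x)_0\) is the composite of \(v\) with the linear closed immersion \(\bbP^{N}\hookrightarrow\bbP^{2N+1}\) given by Segre with the point \(x\in\bbP^1\), that is, \(y\mapsto x\otimes y\), which is induced by an injective linear map. Lemma~\ref{lem:linear-reembedding} then gives \(c_v\le E_{N,2N+1}\,c_{(v_x)_0}\). Combining the three steps yields \(g_{N,D}\) as the product of the implied constants, \(D^5\), and \(E_{N,2N+1}\), all of which depend only on \(N\) and \(D\).

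Uniformity in \(\bar k\) and \(\ell\) follows because every implied constant in \ref{item:C3}, \ref{item:C7} and Lemma~\ref{lem:linear-reembedding} depends only on ambient projective dimensions.

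The main obstacle is the identification in the first paragraph: that Sawin's \(\Psi_f\) in \ref{item:C7}, formed after pulling back to \(X\times_SS_{(\sigma)}\), agrees with the paper's \(R\Psi_{q_x}(\rho_x^*\,\cdot\,)\), including the \(I_x\)-action (only the underlying complex is needed for complexity). Both are \(\iota^*R\bar j_*\bar j^*\) over the same strict henselian trait, so I would verify the agreement directly from \eqref{eq:strict-henselian-nearby-cycles} and \cite[Exp.~XIII, (2.1.2.3)]{SGA7II}, noting that \(\rho_x\) is exactly the base change \(X\times_{\bbA^1}\bbA^1_{(0)}\to X\).
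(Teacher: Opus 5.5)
Your proposal is correct and follows the paper's proof essentially step for step: the \(Rf_*\) estimate of \ref{item:C3} for \(Rj_{x,*}A\), then \ref{item:C7} for the flat morphism \(f_x\) at \(0\in\bbA^1\), then Lemma~\ref{lem:linear-reembedding} for the linear immersion fixing the first Segre factor at \(x\). One small correction: \(S\) need not be smooth, but \(f_x\) is flat regardless because \(S\to\operatorname{Spec}\bar k\) is flat. Also, the identification you flag as the main obstacle is immediate in the paper's setup, since \(q_x\) is by construction the base change of \(f_x\) to \(\bbA^1_{(0)}\).
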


\begin{proof}
With respect to the displayed embeddings, the six-operation estimates \ref{item:C3} bound the
complexity of \(Rj_{x,*}A\) by a constant depending only on \((N,D)\) times \(c_{w_v}(A)\).
The nearby-cycle estimate \ref{item:C7}, applied to the flat morphism \(f_x\) at \(0\in\bbA^1\) with input
\(Rj_{x,*}A\), bounds the complexity of
\(R\Psi_{q_x}\bigl(\rho_x^*Rj_{x,*}A\bigr)=R\Psi_xA\) with respect to the induced embedding of the
special fibre \(\{x\}\times S\).  This embedding equals \(\sigma_x\circ v\), where
\(\sigma_x:\bbP^{N_S}\hookrightarrow\bbP^{2N_S+1}\) is the linear closed immersion obtained from the
Segre embedding by fixing the first factor at \(x\).
Lemma~\ref{lem:linear-reembedding} converts the resulting estimate into the asserted bound for
\(c_v(R\Psi_xA)\).  Taking the larger constant for \(x=0,\infty\) gives \(g_{N,D}\).
\end{proof}

\begin{lem}
\label{lem:inertia-eigenvalue}
Fix \(x\in\{0,\infty\}\), and let \(\widetilde\tau_x\in I_x\) be a lift of the topological generator
\(\tau_x\in I_x^{\mathrm t}\) chosen in \S\ref{sec:tame-kummer}.  Let \(V\) be a finite-dimensional
continuous \(\ol\Ql\)-representation of \(I_x\), and let \(\xi\) be a continuous
tame character of \(I_x\), viewed also as a character of \(I_x^{\mathrm t}\).  If
\[
        H^a_{\mathrm{cts}}(I_x,V\otimes\xi)\neq0
\]
for some \(a\in\mathbb Z\), then the scalar \(\xi(\tau_x)^{-1}\) is an eigenvalue of
\(\widetilde\tau_x\) on \(V\).
\end{lem}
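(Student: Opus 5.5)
We reduce the continuous cohomology of \(I_x\) to that of the pro-\(\ell\) tame factor \(\Gamma_x\simeq\mathbb Z_\ell(1)\) and then use the two-term complex computing cohomology of a procyclic group. The inertia group sits in the extension \(1\to P_x\to I_x\to I_x^{\mathrm t}\to1\), and \(I_x^{\mathrm t}=J_x\times\Gamma_x\). Since \(P_x\) is pro-\(p\) and \(J_x\) is pro-prime-to-\(\ell\), continuous cohomology with \(\ol\Ql\)-coefficients (computed as in \S\ref{sec:sheaves} from an \(\mathcal O_E\)-lattice and \(\varpi\)-adic limits) kills higher cohomology of these factors. This is the prime-to-\(\ell\) vanishing of Appendix~\ref{app:boundary}, which the outline says will be used here. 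The Hochschild--Serre spectral sequence \cite[(3.3)]{Jannsen88} for the normal subgroup \(P'_x=\ker(I_x\to\Gamma_x)\), an extension of \(J_x\) by \(P_x\) and hence of supernatural order prime to \(\ell\), therefore collapses. It gives
\[
 H^a_{\mathrm{cts}}(I_x,V\otimes\xi)\simeq H^a_{\mathrm{cts}}\bigl(\Gamma_x,(V\otimes\xi)^{P'_x}\bigr).
\]
Because \(\xi\) is tame, it is trivial on \(P_x\), but it need not be trivial on \(J_x\). So \((V\otimes\xi)^{P'_x}=W\otimes\xi\), where \(W\subseteq V\) is the subspace on which \(P_x\) acts trivially and \(J_x\) acts through \(\xi|_{J_x}^{-1}\).

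Next we use that \(\Gamma_x\) is procyclic with generator \(\gamma_x\). For a finite-dimensional continuous representation \(U\), the cohomology \(H^\bullet_{\mathrm{cts}}(\Gamma_x,U)\) is computed by \(U\xrightarrow{\gamma_x-1}U\) in degrees \(0,1\). This is standard for \(\mathbb Z_\ell\) with torsion coefficients, and it passes to the limit and to \(\otimes E\) as in \S\ref{sec:sheaves}. Hence a nonzero \(H^a\) forces \(\gamma_x-1\) to be non-invertible on \(W\otimes\xi\). Since \(W\otimes\xi\) is finite-dimensional, \(1\) is then an eigenvalue of \(\gamma_x\) on \(W\otimes\xi\), so some \(w\in W\) satisfies \(\gamma_x w=\xi(\gamma_x)^{-1}w\). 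Here \(\gamma_x\) acts through a lift in \(I_x\), well defined on \(W\) because \(P'_x\) acts on \(W\otimes\xi\) trivially.

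To pass to \(\widetilde\tau_x\), note that \(\widetilde\tau_x\) preserves \(W\) because \(I_x^{\mathrm t}\) is abelian. On \(W\), the action of \(\widetilde\tau_x\) is \(\tau_{x,J}\) (acting by \(\xi(\tau_{x,J})^{-1}\)) times \(\gamma_x\). Taking the generalized \(\xi(\gamma_x)^{-1}\)-eigenspace of \(\gamma_x\) on \(W\), \(\widetilde\tau_x\) acts there with the eigenvalue \(\xi(\tau_{x,J})^{-1}\xi(\gamma_x)^{-1}=\xi(\tau_x)^{-1}\). Since \(W\subseteq V\), this is an eigenvalue of \(\widetilde\tau_x\) on \(V\).

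The main obstacle is the first step: justifying the vanishing of continuous cohomology of the prime-to-\(\ell\) profinite group \(P'_x\) with \(\ol\Ql\)-coefficients, and the collapse of Hochschild--Serre in Jannsen's continuous setting. The plan is to prove it integrally, using the exactness of invariants under a pro-prime-to-\(\ell\) group on \(\ell\)-torsion modules. One then checks that the \(\varpi\)-adic inverse limit introduces only bounded torsion, which dies after inverting \(\varpi\). The fact that \(\xi|_{J_x}\) need not be trivial is handled by the isotypic description of \(W\) above.
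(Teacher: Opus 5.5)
Your proof is correct and follows essentially the paper's argument. The paper applies Hochschild--Serre twice, first over \(P_x\) and then over \(J_x\), invoking Lemma~\ref{lem:prime-to-ell-vanishing} each time, to reach \(\Gamma_x\) acting on \(V_0=(V^{P_x}\otimes\xi)^{J_x}\), which is your \(W\otimes\xi\); you do this in one step over the pro-prime-to-\(\ell\) group \(P'_x=\ker(I_x\to\Gamma_x)\), and both arguments then use Lemma~\ref{lem:zell-cohomology} to extract a fixed vector. Your last paragraph can be shortened: \(\widetilde\tau_x\) itself lifts \(\gamma_x\) along \(I_x\to\Gamma_x\), so a vector \(w\otimes1\) that it fixes on \(W\otimes\xi\) already gives \(\widetilde\tau_x w=\xi(\tau_x)^{-1}w\).
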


\begin{proof}
The wild inertia group \(P_x\) is pro-\(p\), and \(\xi\) is trivial on \(P_x\).  Apply
Hochschild--Serre to
\(1\to P_x\to I_x\to I_x^{\mathrm t}\to1\).  Since \(\ell\neq p\),
Lemma~\ref{lem:prime-to-ell-vanishing} gives
\[
 R\Gamma_{\mathrm{cts}}(I_x,V\otimes\xi)
 \simeq R\Gamma_{\mathrm{cts}}(I_x^{\mathrm t},V^{P_x}\otimes\xi).
\]
Recall from \S\ref{sec:tame-kummer} the decomposition \(I_x^{\mathrm t}=J_x\times\Gamma_x\) with
\(\tau_x=(\tau_{x,J},\gamma_x)\).  Put \(V_0=(V^{P_x}\otimes\xi)^{J_x}\).  Hochschild--Serre and
Lemma~\ref{lem:prime-to-ell-vanishing}, now applied to
\(1\to J_x\to I_x^{\mathrm t}\to\Gamma_x\to1\), give
\[
 R\Gamma_{\mathrm{cts}}(I_x^{\mathrm t},V^{P_x}\otimes\xi)
 \simeq R\Gamma_{\mathrm{cts}}(\Gamma_x,V_0).
\]
The complex on the right is nonzero.  By Lemma~\ref{lem:zell-cohomology}, the endomorphism
\(\gamma_x-1\) of the finite-dimensional vector space \(V_0\) is not invertible.  Hence there is
a nonzero \(u\in V_0\) fixed by \(\gamma_x\).  Since \(u\) is also fixed by \(J_x\), it is
fixed by \(\tau_x\) on \(V^{P_x}\otimes\xi\).  Write \(u=v\otimes1\), with
\(0\neq v\in V^{P_x}\).  Since \(\xi\) factors through \(I_x^{\mathrm t}\), the fixed-vector equation
for the chosen lift is
\(\xi(\tau_x)\widetilde\tau_x v=v\), and hence
\[
        \widetilde\tau_x v=\xi(\tau_x)^{-1}v.
\]
Thus \(v\) is the required eigenvector.  Since \(P_x\) fixes \(v\), every lift of
\(\tau_x\) acts on \(v\) in the same way.
\end{proof}

Let \(p:\bbG_{m,S}\to S\) and \(q:\bbG_{m,S}\to\Gm\) be the projections.  For
\(A\in\Db(\bbG_{m,S},\ol\Ql)\) and a Kummer character \(\lambda\), define
\[
        B_{!,\lambda}=Rp_!(A\otimes q^*\cL_\lambda),
        \qquad
        B_{*,\lambda}=Rp_*(A\otimes q^*\cL_\lambda).
\]
For \(S=\operatorname{Spec}\bar k\) these are \(B_{!,\lambda}=R\Gamma_c(\Gm,A_\lambda)\) and
\(B_{*,\lambda}=R\Gamma(\Gm,A_\lambda)\).  The transformation \eqref{eq:forget-supports} for \(p\) supplies
the canonical morphism
\(\varphi_p(A\otimes q^*\cL_\lambda)\colon B_{!,\lambda}\to B_{*,\lambda}\).  The exceptional set of
the relative Mellin transform is
\begin{equation}
\label{eq:relative-exceptional-set}
        E_{\Gm}(A):=
        \{\lambda\in\widehat{\Gm}(\ol\Ql):\ B_{!,\lambda}\to B_{*,\lambda}
        \text{ is \emph{not} an isomorphism}\}
\end{equation}
A Kummer character \(\lambda\) is \emph{exceptional} for \(A\) if
\(\lambda\in E_{\Gm}(A)\), and \emph{non-exceptional} otherwise.  For non-exceptional \(\lambda\), write
\(B_\lambda\) for the common value of the canonically isomorphic complexes \(B_{!,\lambda}\) and
\(B_{*,\lambda}\).

For a perverse sheaf on \(\bbG_{m,S}\), the two relative direct images have the following perverse
amplitudes.

\begin{lem}
\label{lem:relative-amplitude}
If \(F\) is a perverse sheaf on \(\bbG_{m,S}\), then
\[
        Rp_!F\in{}^pD^{[0,1]}(S),
        \qquad
        Rp_*F\in{}^pD^{[-1,0]}(S).
\]
\end{lem}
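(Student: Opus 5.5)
The two statements are exchanged by Verdier duality: \(D\,Rp_!F\simeq Rp_*DF\), \(DF\) is perverse, and \(D\) exchanges \({}^pD^{\le a}\) with \({}^pD^{\ge -a}\). It therefore suffices to prove \(Rp_*F\in{}^pD^{[-1,0]}(S)\) for every perverse \(F\); applied to \(DF\) and dualized, this gives \(Rp_!F\in{}^pD^{[0,1]}(S)\).

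\emph{Left exactness up to shift.}  The morphism \(p\colon\bbG_{m,S}\to S\) is smooth of relative dimension one, so \(p^*[1]=p^![-1]\) is \(t\)-exact for the perverse \(t\)-structure \cite[4.2.4]{BBDG18}.  Its right adjoint \(Rp_*\) is left \(t\)-exact up to the same shift.  Indeed, for \(G\in{}^pD^{\le -2}(S)\) we have \(p^*G\in{}^pD^{\le -2+1}=\,{}^pD^{\le -1}\), and hence
\[
\operatorname{Hom}(G,Rp_*F)=\operatorname{Hom}(p^*G,F)=0 .
\]
This gives \(Rp_*F\in{}^pD^{\ge -1}(S)\).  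Equivalently, one can use the general bound \cite[4.2.4]{BBDG18} that \(Rf_*\) sends \({}^pD^{\ge0}\) into \({}^pD^{\ge -d}\) when all fibres of \(f\) have dimension \(\le d\).

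\emph{Right exactness.}  This is the Artin vanishing part.  The morphism \(p\) is affine, being the base change of the affine morphism \(\Gm\to\operatorname{Spec}\bar k\).  For an affine morphism, \(Rp_*\) is right \(t\)-exact \cite[Th.~4.1.1, Cor.~4.1.2]{BBDG18}, so \(Rp_*F\in{}^pD^{\le0}(S)\).  Combining the two bounds gives \(Rp_*F\in{}^pD^{[-1,0]}(S)\), and duality then gives the statement for \(Rp_!\).

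The only point needing care is that the cited BBD results are stated for quasi-projective varieties over a field, which is our setting since \(S\) is quasi-projective over \(\bar k\).  The right-exactness step uses relative Artin vanishing, which holds for every affine morphism of finite type between such schemes, so there is no real obstacle.  A remaining check is the shift convention in \(p^*[1]\) being \(t\)-exact; I would verify it against the fibrewise example \(F=\ol\Ql[2]\) on \(\Gm\), for which \(R\Gamma\) lives in perverse degrees \(\{-2,-1\}\) shifted by \(+1\), that is in \(\{-1,0\}\), as expected.
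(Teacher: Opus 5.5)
Your proof is correct and uses the same two inputs as the paper's proof: relative Artin vanishing for the affine morphism \(p\), and the relative-dimension bound \cite[4.2.4]{BBDG18}. The paper cites both halves of each directly for \(Rp_!\) and \(Rp_*\), whereas you obtain the \(Rp_!\) statement by Verdier duality and derive \(Rp_*F\in{}^pD^{\ge-1}(S)\) by adjunction from the \(t\)-exactness of \(p^*[1]\), which are only differences of packaging. (Your closing sanity check only makes sense for \(S\) a smooth curve, where \(\ol\Ql[2]\) on \(\bbG_{m,S}\) is perverse; on \(\Gm\) itself the test object would be \(\ol\Ql[1]\). The argument does not depend on it.)
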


\begin{proof}
The morphism \(p\) is affine, so Artin affine vanishing \cite[Th.~4.1.1, Cor.~4.1.2]{BBDG18},
\cite[Exp.~XIV, 3.1]{SGA4} gives \(Rp_!F\in{}^pD^{\ge0}(S)\) and \(Rp_*F\in{}^pD^{\le0}(S)\).  The
fibres of \(p\) are of dimension one, so the relative-dimension estimate \cite[4.2.4]{BBDG18} gives
\(Rp_!F\in{}^pD^{\le1}(S)\) and \(Rp_*F\in{}^pD^{\ge-1}(S)\).  Combining the two pairs of bounds
gives the claim.
\end{proof}

Let \(\bar p:\bbP^1_S\to S\) be the projection and
\(j:\bbG_{m,S}\hookrightarrow\bbP^1_S\) the open immersion.  For
\(x\in\{0,\infty\}\), let
\(i_x:S\simeq\{x\}\times S\hookrightarrow\bbP^1_S\) be the boundary inclusion.  For a
Kummer character \(\lambda\), put
\(A_\lambda=A\otimes q^*\cL_\lambda\).  Since \(\bar p\) is proper,
\[
        B_{!,\lambda}=R\bar p_*j_!A_\lambda,
        \qquad
        B_{*,\lambda}=R\bar p_*Rj_*A_\lambda ,
\]
and, by Lemma~\ref{lem:forget-supports}(1), the forget-supports morphism \(B_{!,\lambda}\to B_{*,\lambda}\) is
\(R\bar p_*\) applied to \(j_!A_\lambda\to Rj_*A_\lambda\).
The localization triangle for the open immersion \(j\) is
\[
        j_!A_\lambda\longrightarrow Rj_*A_\lambda
        \longrightarrow
        i_{0,*}i_0^*Rj_*A_\lambda\oplus i_{\infty,*}i_\infty^*Rj_*A_\lambda
        \xrightarrow{+1}.
\]
Applying \(R\bar p_*\), and using \(\bar p\circ i_x=\operatorname{id}_S\), gives a canonical
isomorphism in \(\Db(S)\)
\begin{equation}
\label{eq:relative-boundary-cone}
        \Cone(B_{!,\lambda}\to B_{*,\lambda})
        \simeq
        i_0^*Rj_*A_\lambda\oplus i_\infty^*Rj_*A_\lambda .
\end{equation}
For \(x\in\{0,\infty\}\), set
\begin{equation}
\label{eq:boundary-exceptional-set}
        E_x(A):=
        \{\lambda\in\widehat{\Gm}(\ol\Ql):i_x^*Rj_*A_\lambda\neq0\}.
\end{equation}
Equations \eqref{eq:relative-exceptional-set}, \eqref{eq:relative-boundary-cone}, and
\eqref{eq:boundary-exceptional-set} give
\[
        E_{\Gm}(A)=E_0(A)\cup E_\infty(A).
\]

We now bound the exceptional set \eqref{eq:relative-exceptional-set} in terms of \(c_{w_v}(A)\),
\(N_S\), and the parameter \(D(S,v)\) of \eqref{eq:relative-geometric-complexity}.

\begin{lem}
\label{lem:exceptional}
Let \(v:S\hookrightarrow\bbP^{N_S}\) be the fixed embedding
\eqref{eq:relative-base-embedding}, let \(D(S,v)\) be defined by
\eqref{eq:relative-geometric-complexity}, and retain the functions \(H_N\) of
Lemma~\ref{lem:lissity-complexity}.
For every \(N,D\in\mathbb Z_{\ge0}\), there is a nondecreasing function
\(F_{N,D}:\mathbb R_{\ge0}\to\mathbb R_{\ge0}\) such that, if \(N_S=N\) and \(D(S,v)\le D\), then
\[
        \#E_{\Gm}(A)\le F_{N,D}(c_{w_v}(A))
        \qquad(A\in\Db(\bbG_{m,S},\ol\Ql)).
\]
Choosing \(g_{N,D}\) as in Lemma~\ref{lem:coordinate-nearby-complexity}, one may take
\[
        F_{N,D}(t)=2H_N(g_{N,D}t)^2.
\]
The function \(F_{N,D}\) is independent of \(\bar k\) and of \(\ell\).
\end{lem}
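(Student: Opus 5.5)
Since \(E_{\Gm}(A)=E_0(A)\cup E_\infty(A)\), it suffices to show \(\#E_x(A)\le H_N(g_{N,D}\,c_{w_v}(A))^2\) for each \(x\in\{0,\infty\}\); the factor \(2\) in \(F_{N,D}\) accounts for the two boundary points. Fix \(x\) and \(\lambda\in E_x(A)\). Then there is a geometric point \(\bar s\to S\) and a degree \(a\) with \(\cH^a(i_x^*Rj_*A_\lambda)_{\bar s}\neq0\). The plan is to show that \(\lambda_x(\tau_x)^{-1}\) is then an eigenvalue of a fixed lift \(\widetilde\tau_x\) acting on some stalk \(\cH^b(R\Psi_xA)_{\bar s}\). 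Granting this, the injectivity of \(\lambda\mapsto\lambda_x(\tau_x)\) (Lemma~\ref{lem:kummer-local-monodromy}) gives \(\#E_x(A)\le\#\Eig_S(R\Psi_xA,\widetilde\tau_x)\). Lemma~\ref{lem:eigenvalue-strata}, which applies since \(R\Psi_xA\) is constructible with its inertia action, bounds the right side by \(H_N(c_v(R\Psi_xA))^2\). Lemma~\ref{lem:coordinate-nearby-complexity}, together with monotonicity of \(H_N\), then gives \(\#E_x(A)\le H_N(g_{N,D}c_{w_v}(A))^2\), as required.

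The key step identifies the boundary stalk with inertia cohomology. Restricting \(i_x^*Rj_*A_\lambda\) to the strict henselization gives
\[
(i_x^*Rj_*A_\lambda)_{\bar s}\simeq R\Gamma\bigl((\bbP^1_{(x)})_{S,(\bar s)}\times_{\bbP^1_{(x)}}\eta_x,\ A_\lambda\bigr).
\]
The Hochschild--Serre spectral sequence for the \(I_x\)-torsor over \(\eta_x\), in the form \cite[\textit{Th\'eor\`emes de finitude}, Lem.~3.11]{SGA45}, combined with the identification \eqref{eq:strict-henselian-nearby-cycles}, rewrites this as
\[
R\Gamma_{\mathrm{cts}}\bigl(I_x,(R\Psi_xA_\lambda)_{\bar s}\bigr).
\]
The continuous-cohomology formalism recalled in \S\ref{sec:sheaves} is used here.

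Next one moves the twist outside the nearby cycles. Since \(q^*\cL_\lambda\) is pulled back from the trait and is tame there, one has \(R\Psi_x(A_\lambda)\simeq R\Psi_x(A)\otimes\lambda_x\) \(I_x\)-equivariantly. Nonvanishing of the total complex then forces some \(H^c_{\mathrm{cts}}(I_x,\cH^b(R\Psi_xA)_{\bar s}\otimes\lambda_x)\) to be nonzero. This follows from the spectral sequence over the cohomology sheaves: the complex is bounded and the groups are finite-dimensional, so nonvanishing of the abutment implies some \(E_2\)-term is nonzero. Lemma~\ref{lem:inertia-eigenvalue}, applied with \(V=\cH^b(R\Psi_xA)_{\bar s}\) and \(\xi=\lambda_x\), yields the eigenvalue \(\lambda_x(\tau_x)^{-1}\) of \(\widetilde\tau_x\) on \(V\), which is the claim.

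The main obstacle is this stalk-to-inertia-cohomology identification in the relative setting. The points needing care are the commutation of \(i_x^*Rj_*\) with the base change \(\rho_x\) to the strict henselian trait, and the fact that continuous \(\ol\Ql\)-cohomology computes it (via the Jannsen/Bhatt--Scholze comparison). Here I would reduce to torsion coefficients, where SGA~4\(\tfrac12\) applies directly, and then pass to the limit using the lattice description in \S\ref{sec:sheaves}. The uniformity in \(\bar k\) and \(\ell\) is automatic, because \(H_N\) and \(g_{N,D}\) are independent of both.
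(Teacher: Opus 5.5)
Your proposal is correct and follows the paper's proof in every step. You reduce to the two boundary points via $E_{\Gm}(A)=E_0(A)\cup E_\infty(A)$. You obtain a nonzero $H^a_{\mathrm{cts}}\bigl(I_x,\cH^b(R\Psi_xA)_{\bar s}\otimes\lambda_x\bigr)$ from the Hochschild--Serre spectral sequence for nearby cycles, after pulling the tame twist out of $R\Psi_x$ (the paper does this with Lemma~\ref{lem:constant-tensor}). You then apply Lemma~\ref{lem:inertia-eigenvalue} and the injectivity in Lemma~\ref{lem:kummer-local-monodromy}, and conclude with Lemmas~\ref{lem:eigenvalue-strata} and~\ref{lem:coordinate-nearby-complexity}. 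The one difference is minor: you would justify the $\ell$-adic Hochschild--Serre step by reducing to torsion coefficients, whereas the paper takes it directly from the Bhatt--Scholze/Jannsen continuous-cohomology setup of \S\ref{sec:sheaves}.
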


\begin{proof}
For each \(x\in\{0,\infty\}\), choose a lift
\(\widetilde\tau_x\in I_x\) of \(\tau_x\).  The \(I_x\)-action on \(R\Psi_x A\) makes
\(\widetilde\tau_x\) an automorphism of this complex.
Fix \(x\) and \(\lambda\in E_x(A)\).  By \eqref{eq:boundary-exceptional-set}, there is a geometric point
\(\bar y\to S\) such that \((i_x^*Rj_*A_\lambda)_{\bar y}\neq0\).  The boundary stalk is computed by the
Hochschild--Serre spectral sequence for nearby cycles over the strictly henselian trait
\[
        E_2^{a,b}
        =H^a_{\mathrm{cts}}\bigl(I_x,H^b\bigl((R\Psi_x A_\lambda)_{\bar y}\bigr)\bigr)
        \Longrightarrow
        H^{a+b}\bigl((i_x^*Rj_*A_\lambda)_{\bar y}\bigr),
\]
in which \(E_2^{a,b}=0\) for \(a\notin\{0,1\}\), see \cite[\textit{Th\'eor\`emes de finitude},
Lem.~3.11]{SGA45} and \cite[Th.~(6.5)]{KatzLaumon1985}.  The abutment is nonzero, so there are
integers \(a\) and \(b\) with
\[
        H^a_{\mathrm{cts}}\bigl(I_x,H^b\bigl((R\Psi_x A_\lambda)_{\bar y}\bigr)\bigr)\neq0.
\]
The composite \(q\circ\nu_x:\eta_x\times S\to\Gm\) factors through the projection to \(\eta_x\), so the
restriction of \(\nu_x^*q^*\cL_\lambda\) to \(\bar\eta_x\times S\) is the constant sheaf attached to the
one-dimensional \(\ol\Ql\)-vector space \(V_\lambda=(\cL_\lambda)_{\bar\eta_x}\), on which \(I_x\) acts
through \(I_x\twoheadrightarrow I_x^{\mathrm t}\) by the character \(\lambda_x\) of
\S\ref{sec:tame-kummer}.  Applying \eqref{eq:strict-henselian-nearby-cycles} to \(A_\lambda\) gives
\[
        R\Psi_x A_\lambda\simeq
        \iota_x^*R\bar j_{x,*}
        \bigl((\nu_x^*A)|_{\bar\eta_x\times S}\otimes_{\ol\Ql}V_\lambda\bigr).
\]
Lemma~\ref{lem:constant-tensor}, applied to \(f=\bar j_x\), followed by the canonical tensor
compatibility of \(\iota_x^*\), identifies the right-hand side with
\(R\Psi_x A\otimes_{\ol\Ql}V_\lambda\).  Naturality of these isomorphisms in the complex and in the
vector space, applied respectively to the action of \(\sigma\in I_x\) on
\((\nu_x^*A)|_{\bar\eta_x\times S}\) and to the action of \(\sigma\) on \(V_\lambda\), shows that this
identification is \(I_x\)-equivariant for the diagonal actions.
Hence
\[
        R\Psi_x A_\lambda\simeq R\Psi_x A\otimes_{\ol\Ql}V_\lambda
\]
in \(\Db(S)\), equivariantly for the diagonal \(I_x\)-action on the right.  Since \(V_\lambda\) is
one-dimensional, a choice of basis identifies the underlying complexes and multiplies the
\(I_x\)-action by \(\lambda_x\).  Stalks at \(\bar y\) and cohomology commute with
\(\otimes_{\ol\Ql}V_\lambda\), so \(H^b\bigl((R\Psi_x A_\lambda)_{\bar y}\bigr)\) is the twist
\(H^b\bigl((R\Psi_x A)_{\bar y}\bigr)\otimes\lambda_x\) in the sense of
Lemma~\ref{lem:inertia-eigenvalue}, and
\[
        H^a_{\mathrm{cts}}\bigl(I_x,H^b\bigl((R\Psi_x A)_{\bar y}\bigr)\otimes\lambda_x\bigr)\neq0.
\]
Lemma~\ref{lem:inertia-eigenvalue}, applied to \(V=H^b\bigl((R\Psi_x A)_{\bar y}\bigr)\) and
\(\xi=\lambda_x\), shows that the scalar
\(\lambda_x(\tau_x)^{-1}\) is an eigenvalue of \(\widetilde\tau_x\) on
\(H^b\bigl((R\Psi_x A)_{\bar y}\bigr)\).  By Lemma~\ref{lem:kummer-local-monodromy},
\[
        \#E_x(A)\le
        \#\Eig_S(R\Psi_x A,\widetilde\tau_x).
\]
Lemmas~\ref{lem:eigenvalue-strata} and~\ref{lem:coordinate-nearby-complexity} now give
\[
 \#E_{\Gm}(A)
 \le 2H_N\bigl(g_{N,D}c_{w_v}(A)\bigr)^2
 =F_{N,D}(c_{w_v}(A)).
\]
This is the asserted bound, and it is nondecreasing because \(H_N\) is nondecreasing.
\end{proof}

\begin{cor}
\label{cor:d-one}
There is a nondecreasing function
\(\Phi^\Delta_{1}:\mathbb R_{\ge0}\to\mathbb R_{\ge0}\) such that, for every \(c\ge0\) and
every \(B\in\Db((\Gm)_{\bar k},\ol\Ql)\) with \(\cu(B)\le c\), the set \(\Delta(B)(\ol\Ql)\) is finite and
\[
        \#\Delta(B)(\ol\Ql)\le\Phi^\Delta_{1}(c).
\]
The function is independent of \(\bar k\) and of \(\ell\).
\end{cor}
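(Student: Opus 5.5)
The plan is to specialize Lemma~\ref{lem:exceptional} to the base \(S=\operatorname{Spec}\bar k\) with \(v=\operatorname{id}_{\bbP^0}\), so that \(N_S=0\).  In this case \(\bbG_{m,S}=\Gm\).  The immersion \(w_v\colon\Gm\hookrightarrow\bbP^1\hookrightarrow\bbP^{1}\) is the degree-one embedding \(u_1\); the Segre embedding with \(\bbP^0\) is the identity, so \(c_{w_v}=\cu\) on the nose.  Moreover, \(D(\operatorname{Spec}\bar k,\operatorname{id}_{\bbP^0})\) is bounded by an absolute constant \(D_0\), as remarked after \eqref{eq:relative-geometric-complexity}.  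With these choices, \(B_{!,\lambda}=R\Gamma_c(\Gm,B_\lambda)\) and \(B_{*,\lambda}=R\Gamma(\Gm,B_\lambda)\).  The morphism \(\varphi_p(B\otimes q^*\cL_\lambda)\) is exactly the forget-supports morphism \(\varphi_{B,\lambda}\) for the structure morphism, since \(p\) is the structure morphism of \(\Gm\).

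The next step is to compare the relevant sets.  By \ref{item:GL2} (equivalently the description of \(\Delta(B)(\ol\Ql)\) in \S\ref{sec:statements}), the set \(\Delta(B)(\ol\Ql)\) consists of those \(\chi\in\widehat{\Gm}(\ol\Ql)\) for which \(\varphi_{B,\chi}\) is not an isomorphism.  This is literally the exceptional set \(E_{\Gm}(B)\) of \eqref{eq:relative-exceptional-set} for \(S=\operatorname{Spec}\bar k\).  One should check that the Kummer sheaf attached to a point of \(\widehat{\Gm}\) in \S\ref{sec:characters} is the same \(\cL_\lambda\) used in the relative construction.  This holds by construction, since both are the rank-one sheaves attached to continuous characters of \(\pi_1^{\mathrm t}(\Gm,1)\).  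Hence \(\Delta(B)(\ol\Ql)=E_{\Gm}(B)\).

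Lemma~\ref{lem:exceptional} then gives
\[
\#\Delta(B)(\ol\Ql)\le F_{0,D_0}(\cu(B))\le F_{0,D_0}(c),
\]
using that \(F_{0,D_0}\) is nondecreasing.  This yields finiteness and allows us to set \(\Phi^\Delta_1=F_{0,D_0}\), which equals \(2H_0(g_{0,D_0}t)^2\).  That function is nondecreasing and independent of \(\bar k\) and \(\ell\) by the corresponding independence statements in Lemmas~\ref{lem:lissity-complexity}, \ref{lem:coordinate-nearby-complexity} and~\ref{lem:exceptional}.

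The only point needing care, and the main potential obstacle, is the bookkeeping of embeddings.  Specifically, we must confirm that \(w_v\) for \(S=\operatorname{Spec}\bar k\) agrees with \(u_1\) (or at worst differs by a linear re-embedding).  We must also confirm that the constant \(D_0\) is genuinely independent of \(\bar k\) and \(\ell\).  For the first point, if the Segre construction lands in \(\bbP^{1}\) via a linear identification, Lemma~\ref{lem:linear-reembedding} absorbs the discrepancy into a constant \(E_{1,1}\), and we replace \(c\) by \(E_{1,1}c\) in the bound.  For the second point, independence follows from Lemma~\ref{lem:standard-geometric-complexity}, because all the morphisms involved (\(j_x\), \(t_x\), \(a\)) are among the standard ones fixed over \(\mathbb Z\).
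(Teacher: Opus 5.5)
Your proposal is correct and is the paper's proof: specialize Lemma~\ref{lem:exceptional} to \(S=\operatorname{Spec}\bar k\), bound \(D(S,v)\) uniformly via Lemma~\ref{lem:standard-geometric-complexity}, identify \(E_{\Gm}(B)=\Delta(B)(\ol\Ql)\) through \ref{item:GL2}, and take \(\Phi^\Delta_1=F_{0,D}\) for that uniform bound \(D\). The linear re-embedding fallback is unnecessary, since with \(N_S=0\) the immersion \(w_v\colon\Gm\hookrightarrow\bbP^{1}\) is exactly \(u_1\).
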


\begin{proof}
Take \(S=\operatorname{Spec}\bar k\), with its standard embedding.  The constituents of the parameter
\(D(S,v)\) of \eqref{eq:relative-geometric-complexity} are standard embeddings and morphisms of
\S\ref{sec:embeddings} in dimension at most one, so Lemma~\ref{lem:standard-geometric-complexity} gives
\(D(S,v)\le D_1^{\mathrm{std}}\).  Then \(E_{\Gm}(B)=\Delta(B)(\ol\Ql)\) by \ref{item:GL2}, and
Lemma~\ref{lem:exceptional} gives the bound with \(\Phi^\Delta_{1}=F_{0,D_1^{\mathrm{std}}}\).
\end{proof}

For perverse \(B\), this is the characteristic-uniform form of Katz's one-dimensional exceptional-character
bound \cite[Ch.~1]{Katz2012}.

\section{Proofs of Theorems~\ref{thm:delta-count} and~\ref{thm:stratified}}
\label{sec:counting}

We first prove Theorem~\ref{thm:delta-count} for perverse sheaves, together with
Theorem~\ref{thm:stratified}, by induction on \(d\), with the case \(d=1\) supplied by
Corollary~\ref{cor:d-one}.  The case of an arbitrary complex then follows from its perverse cohomology
sheaves.  The inductive step applies the relative Mellin transform of \S\ref{sec:relative-mellin} in the
first coordinate.  For \(d\ge2\), we take the base
\(\Gm^{d-1}\), endowed with the embedding \(u_{d-1}\) of \S\ref{sec:embeddings}, and write
\(\Gm^d=\Gm\times\Gm^{d-1}=\bbG_{m,\Gm^{d-1}}\).
The resulting embedding using the recipe in \S\ref{sec:relative-mellin} is
\[
 w_{u_{d-1}}:\Gm^d\hookrightarrow\bbP^1\times\bbP^{2^{d-1}-1}
 \hookrightarrow\bbP^{2^d-1}.
\]
By the compatibility of the embeddings fixed in \S\ref{sec:embeddings}, this is \(u_d\).
Consequently, \(c_{w_{u_{d-1}}}(A)=c_{u_d}(A)=\cu(A)\) for every \(A\in\Db(\Gm^d)\).

Let
\begin{equation}
\label{eq:coordinate-projections}
 p:\Gm^d\longrightarrow\Gm^{d-1},\qquad
 q:\Gm^d\longrightarrow\Gm
\end{equation}
be the projections to the last \(d-1\) factors and to the first factor, respectively.  Under the
identification \(\Gm^d=\bbG_{m,\Gm^{d-1}}\), these are the structural projections of
\S\ref{sec:relative-mellin}.  For \(A\in\Db(\Gm^d)\) and a Kummer character \(\lambda\), the complexes
\(B_{!,\lambda}\) and \(B_{*,\lambda}\), the exceptional set \(E_{\Gm}(A)\) of
\eqref{eq:relative-exceptional-set}, and, for non-exceptional \(\lambda\), the common value
\(B_\lambda\) are taken with respect to this base.
\begin{lem}
\label{lem:complexity-bounds}
For every \(d\ge1\), there is a nondecreasing function
\(F_{d}:\mathbb R_{\ge0}\to\mathbb R_{\ge0}\), independent of the algebraically closed base field and of
\(\ell\), with the following properties.
\begin{enumerate}
\item[(i)] for \(M\in\Db(\Gm^d)\), with \(\cH^i(M)_1\) the stalks at the identity,
\[
        \sum_i\cu({}^pH^i(M))\le F_{d}(\cu(M)),\qquad
        \sum_i\dim\cH^i(M)_1\le F_{d}(\cu(M)).
\]
\item[(ii)] for semisimple \(M\in\Perv_{\mathrm{int}}(\Gm^d)\),
\[
        \cu(M\oplus M^\vee)\le F_{d}(\cu(M)).
\]
\item[(iii)] if \(d\ge2\), then for every perverse sheaf \(A\) on \(\Gm^d\) and every Kummer character
\(\lambda\) of the first factor, using the notation in (\ref{eq:relative-exceptional-set})
\[
\begin{gathered}
        \#E_{\Gm}(A)\le F_{d}(\cu(A)),\\
        \cu(B_{!,\lambda}),\ \cu(B_{*,\lambda}),\
        \cu({}^pH^r(B_{!,\lambda})),\ \cu({}^pH^r(B_{*,\lambda}))
        \le F_{d}(\cu(A))
        \qquad(r\in\mathbb Z).
\end{gathered}
\]
\end{enumerate}
\end{lem}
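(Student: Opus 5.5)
The plan is to construct \(F_d\) as the maximum of finitely many nondecreasing functions, one for each estimate in (i)--(iii). Each is obtained by combining \ref{item:C3}--\ref{item:C6} with the uniform geometric bound \(D_d^{\mathrm{std}}\) of Lemma~\ref{lem:standard-geometric-complexity}, and each is then replaced by its nondecreasing majorant. Since every ingredient is independent of the base field and of \(\ell\), so is the resulting \(F_d\).

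For (i), the first inequality follows directly from \ref{item:C6}. The sum over \(i\) of the complexities of the Jordan--H\"older factors of the \({}^pH^i(M)\) is \(\ll c(u_d)\cu(M)\), and \ref{item:C4} bounds each \(\cu({}^pH^i(M))\) by the sum over its own factors. Since \(c(u_d)\le D_d^{\mathrm{std}}\), this gives a linear bound. For the stalk at the identity, I would pull back along the identity section \(e\colon\operatorname{Spec}\bar k\to\Gm^d\), which is one of the standard morphisms of \S\ref{sec:embeddings}. Then \ref{item:C3} gives \(c(e^*M)\ll c_{\mathrm{id},u_d}(e)\,\cu(M)\). On a point the complexity is the total dimension of the cohomology, which is \(\sum_i\dim\cH^i(M)_1\).

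For (ii), \ref{item:C4} gives \(\cu(M\oplus M^\vee)=\cu(M)+\cu(\operatorname{inv}^*DM)\). The estimate \ref{item:C3} controls \(\cu(DM)\ll c(u_d)\cu(M)\), and pulling back along the standard automorphism \(\operatorname{inv}\) controls \(\cu(\operatorname{inv}^*DM)\ll c_{u_d,u_d}(\operatorname{inv})\,\cu(DM)\). Semisimplicity plays no role in this argument.

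For (iii), with \(d\ge2\) I would work over the base \(S=\Gm^{d-1}\) with \(v=u_{d-1}\), so that \(w_v=u_d\) and \(c_{w_v}(A)=\cu(A)\). Lemma~\ref{lem:standard-geometric-complexity} gives \(D(\Gm^{d-1},u_{d-1})\le D_d^{\mathrm{std}}\). Lemma~\ref{lem:exceptional}, with \(N=2^{d-1}-1\), then bounds \(\#E_{\Gm}(A)\) by \(F_{N,D_d^{\mathrm{std}}}(\cu(A))\).

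For the complexes \(B_{\star,\lambda}\), I would first bound the twist. Lemma~\ref{lem:kummer-complexity} gives \(\cu(\cL_\lambda)\le\kappa_1\), and \ref{item:C3} gives \(\cu(q^*\cL_\lambda)\ll D_d^{\mathrm{std}}\kappa_1\). A second application of \ref{item:C3} then gives \(\cu(A\otimes q^*\cL_\lambda)\ll\cu(A)\). Applying the pushforward estimates \ref{item:C3} for \(p\colon(\Gm^d,u_d)\to(\Gm^{d-1},u_{d-1})\) bounds \(c_{u_{d-1}}(B_{!,\lambda})\) and \(c_{u_{d-1}}(B_{*,\lambda})\) by constants times \(\cu(A)\). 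The constants involve only \(c(u_d)\), \(c(u_{d-1})\), \(c_{u_d,u_{d-1}}(p)\) and \(\kappa_1\), all bounded in terms of \(d\), and none depend on \(\lambda\).

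The perverse cohomology sheaves are then handled by part (i) in dimension \(d-1\), that is, by \ref{item:C6} on \(\Gm^{d-1}\). One point to watch is that the complexities in (iii) are \(c_{u_{d-1}}\) on \(\Gm^{d-1}\), which is what the notation \(\cu\) means there. The only real bookkeeping obstacle is making sure every morphism used (\(e\), \(\operatorname{inv}\), \(p\), \(q\)) is among the standard ones, so that Lemma~\ref{lem:standard-geometric-complexity} covers it uniformly. Finally, I would set \(F_d\) to be the maximum of all these functions, taking for the \(d-1\) inputs the already-constructed bounds, and pass to the nondecreasing majorant.
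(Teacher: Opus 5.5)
Your proposal is correct and follows the paper's proof essentially step by step. Part (i) uses \ref{item:C6}, \ref{item:C4} and pullback along the identity section; part (ii) uses the duality and pullback estimates for \(\operatorname{inv}^*D\); part (iii) uses Lemma~\ref{lem:kummer-complexity}, the tensor and pushforward estimates, \ref{item:C6} on \(\Gm^{d-1}\), and Lemma~\ref{lem:exceptional} with \((N,D)=(2^{d-1}-1,D_d^{\mathrm{std}})\). You are also right that semisimplicity plays no role in (ii).
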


\begin{proof}
For (i), let \((A_{i,j})_{1\le j\le n_i}\) be the Jordan--H\"older factors of \({}^pH^i(M)\),
repeated with multiplicity.  The Jordan--H\"older estimate \ref{item:C6} gives
\[
        \sum_i\sum_{j=1}^{n_i}\cu(A_{i,j})
        \ll_d c(u_d)\cu(M).
\]
For each \(i\), repeated application of the triangle inequality \ref{item:C4} to a Jordan--H\"older
filtration gives
\[
        \cu({}^pH^i(M))\le\sum_{j=1}^{n_i}\cu(A_{i,j}).
\]
Lemma~\ref{lem:standard-geometric-complexity} bounds \(c(u_d)\) solely in terms of \(d\), uniformly in
the algebraically closed field and in \(\ell\).  Summing the last inequality over \(i\) proves the first
bound in (i).

Let \(i_1:\{1\}\hookrightarrow\Gm^d\) be the identity section.  Since the complexity of a complex on
the point is the sum of the dimensions of its cohomology groups \ref{item:C1}, one has
\[
        c(i_1^*M)=\sum_i\dim\cH^i(M)_1.
\]
The pullback estimate \ref{item:C3}, together with the uniform bound for the embedded
morphism \(i_1\) in Lemma~\ref{lem:standard-geometric-complexity}, proves the second bound in (i).

For (ii), the duality and pullback estimates \ref{item:C3} bound
\(\cu(M^\vee)\) in terms of \(\cu(M)\), since \(M^\vee=\operatorname{inv}^*D(M)\).  The direct-sum
identity \ref{item:C4} then bounds \(\cu(M\oplus M^\vee)\).

For (iii), Lemma~\ref{lem:kummer-complexity} and the pullback estimate give a bound for
\(\cu(q^*\cL_\lambda)\) independent of \(\lambda\).  The tensor and pushforward estimates
\ref{item:C3} then bound
\[
        B_{!,\lambda}=Rp_!(A\otimes q^*\cL_\lambda),\qquad
        B_{*,\lambda}=Rp_*(A\otimes q^*\cL_\lambda)
\]
and \ref{item:C4} and \ref{item:C6} bound their perverse cohomology sheaves, uniformly in
\(\lambda\).  Lemma~\ref{lem:exceptional}, with \((N,D)=(2^{d-1}-1,D_d^{\mathrm{std}})\), bounds
\(\#E_{\Gm}(A)\).  Take the pointwise maximum of these finitely many nondecreasing functions.  Every input
complexity has now been bounded by Lemma~\ref{lem:standard-geometric-complexity}, so \(F_{d}\) is
independent of the field, of \(\ell\), and of \(\lambda\).
\end{proof}

For each \(d\), fix a function \(F_d\) as in Lemma~\ref{lem:complexity-bounds}.

\begin{lem}
\label{lem:fibre}
Let \(d\ge2\) and let \(A\) be perverse on \(\Gm^d\).  With the notation of
\eqref{eq:relative-exceptional-set}, suppose that \(\lambda\notin E_{\Gm}(A)\) and write \(B_\lambda\)
for the common value of the canonically isomorphic complexes \(B_{!,\lambda}\) and \(B_{*,\lambda}\) on
\(\Gm^{d-1}\).
Then \(B_\lambda\) is perverse and
\[
        \cu(B_\lambda)\le F_{d}(\cu(A)).
\]
One has the following equality of subsets of \(\widehat{\Gm^{d-1}}(\ol\Ql)\):
\[
        \{\eta\in\widehat{\Gm^{d-1}}(\ol\Ql):(\lambda,\eta)\in\Delta(A)(\ol\Ql)\}
        =\Delta(B_\lambda)(\ol\Ql).
\]
\end{lem}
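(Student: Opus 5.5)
Perversity comes first and is immediate from Lemma~\ref{lem:relative-amplitude}.  Apply it to the perverse sheaf \(F=A\otimes q^*\cL_\lambda\); this is perverse because tensoring with a rank-one lisse sheaf is \(t\)-exact for the perverse \(t\)-structure.  The lemma gives
\[
B_{!,\lambda}\in{}^pD^{[0,1]}(\Gm^{d-1}),
\qquad
B_{*,\lambda}\in{}^pD^{[-1,0]}(\Gm^{d-1}).
\]
Since \(\lambda\notin E_{\Gm}(A)\), the forget-supports morphism \(B_{!,\lambda}\to B_{*,\lambda}\) is an isomorphism.  Hence \(B_\lambda\) lies in \({}^pD^{[0,1]}\cap{}^pD^{[-1,0]}={}^pD^{[0,0]}\), i.e.\ it is perverse.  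The complexity bound \(\cu(B_\lambda)\le F_d(\cu(A))\) is then a restatement of Lemma~\ref{lem:complexity-bounds}(iii), applied to \(B_\lambda=B_{!,\lambda}\).

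For the equality of character sets, fix \(\eta\in\widehat{\Gm^{d-1}}(\ol\Ql)\) and write \(f\colon\Gm^{d-1}\to\operatorname{Spec}\bar k\) for the structure morphism.  By \S\ref{sec:characters} we have \(\cL_{(\lambda,\eta)}\simeq q^*\cL_\lambda\otimes p^*\cL_\eta\), so \(A_{(\lambda,\eta)}\simeq F\otimes p^*\cL_\eta\).  Now apply the commutative diagram \eqref{eq:forget-supports-composition-lisse} with \(K=F\), \(\cL=\cL_\eta\), and the morphisms \(p\) and \(f\).  Its upper arrow is \(\varphi_{A,(\lambda,\eta)}\), and the vertical arrows \(\beta_!,\beta_*\) are isomorphisms.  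So \(\varphi_{A,(\lambda,\eta)}\) is an isomorphism if and only if the lower composite is:
\[
Rf_!(B_{!,\lambda}\otimes\cL_\eta)
\xrightarrow{Rf_!(\varphi_p(F)\otimes\mathrm{id})}
Rf_!(B_{*,\lambda}\otimes\cL_\eta)
\xrightarrow{\varphi_f}
Rf_*(B_{*,\lambda}\otimes\cL_\eta).
\]
The first arrow is an isomorphism because \(\varphi_p(F)\) is one, \(\lambda\) being non-exceptional.  Therefore the composite is an isomorphism exactly when \(\varphi_f(B_{*,\lambda}\otimes\cL_\eta)\) is.  Identifying \(B_{*,\lambda}\) with \(B_\lambda\), this last morphism is \(\varphi_{B_\lambda,\eta}\).

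By \ref{item:GL2} applied on \(\Gm^d\) and on \(\Gm^{d-1}\), the point \((\lambda,\eta)\) lies in \(\Delta(A)(\ol\Ql)\) if and only if \(\eta\in\Delta(B_\lambda)(\ol\Ql)\), which is the claimed equality.

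The main point needing care is the identification of \(B_\lambda\): both \(B_{!,\lambda}\) and \(B_{*,\lambda}\) appear in the lower row of the diagram, so we must check that transporting along the isomorphism \(\varphi_p(F)\) turns \(\varphi_f(B_{*,\lambda}\otimes\cL_\eta)\) into the comparison morphism for \(B_{!,\lambda}\).  This holds because \(\varphi_f\) is a natural transformation, so it commutes with the isomorphism \(\varphi_p(F)\otimes\mathrm{id}\).  Consequently the answer does not depend on which representative of \(B_\lambda\) is chosen.
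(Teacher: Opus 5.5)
Your proof is correct and follows the paper's argument. You get perversity from the isomorphism \(B_{!,\lambda}\simeq B_{*,\lambda}\) plus perverse amplitude, the complexity bound from Lemma~\ref{lem:complexity-bounds}(iii), and the equality of loci from diagram \eqref{eq:forget-supports-composition-lisse} with \(K=A\otimes q^*\cL_\lambda\), \(\cL=\cL_\eta\), together with \ref{item:GL2}. The differences are cosmetic: the paper uses only the Artin-vanishing halves \(B_{!,\lambda}\in{}^pD^{\ge0}\) and \(B_{*,\lambda}\in{}^pD^{\le0}\), which already suffice, rather than the full Lemma~\ref{lem:relative-amplitude}, and it leaves implicit the naturality of \(\varphi_f\) that you use to transport along \(\varphi_p(K_\lambda)\).
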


\begin{proof}
Put \(K_\lambda=A\otimes q^*\cL_\lambda\).  The sheaf \(q^*\cL_\lambda\) is lisse of rank one, so
\(K_\lambda\) is perverse.  Since \(p\) is affine, Artin affine vanishing gives
\[
        B_{!,\lambda}=Rp_!K_\lambda\in{}^pD^{\ge0}(\Gm^{d-1}),
        \qquad
        B_{*,\lambda}=Rp_*K_\lambda\in{}^pD^{\le0}(\Gm^{d-1})
\]
\cite[Th.~4.1.1, Cor.~4.1.2]{BBDG18}, \cite[Exp.~XIV, 3.1]{SGA4}.  By the definition of
\(E_{\Gm}(A)\) in \eqref{eq:relative-exceptional-set}, the morphism
\(\varphi_p(K_\lambda)\colon B_{!,\lambda}\to B_{*,\lambda}\) is an isomorphism.  Hence their common value
\(B_\lambda\) belongs to the heart of the perverse \(t\)-structure.  Its complexity bound follows from
Lemma~\ref{lem:complexity-bounds}(iii).

Let \(f\colon\Gm^{d-1}\to\operatorname{Spec}\bar k\) be the structure morphism.  Apply
\eqref{eq:forget-supports-composition-lisse} with \(K=K_\lambda\) and \(\cL=\cL_\eta\).  Identifying
\(Rp_!K_\lambda\) and \(Rp_*K_\lambda\) with \(B_\lambda\) by \(\varphi_p(K_\lambda)\) gives the
commutative square
\[
\begin{tikzcd}[column sep=large]
R\Gamma_c\bigl(\Gm^d,A_{(\lambda,\eta)}\bigr)
  \arrow[r,"\varphi_{A,(\lambda,\eta)}"]
  \arrow[d,"\beta_!"',"\rotatebox{90}{\(\sim\)}"] &
R\Gamma\bigl(\Gm^d,A_{(\lambda,\eta)}\bigr)
  \arrow[d,"\beta_*","\rotatebox{90}{\(\sim\)}"'] \\
R\Gamma_c\bigl(\Gm^{d-1},(B_\lambda)_\eta\bigr)
  \arrow[r,"\varphi_{B_\lambda,\eta}"'] &
R\Gamma\bigl(\Gm^{d-1},(B_\lambda)_\eta\bigr).
\end{tikzcd}
\]
Here \(\beta_!\) and \(\beta_*\) are the composition and projection-formula isomorphisms in
\eqref{eq:forget-supports-composition-lisse}.  Hence the upper horizontal morphism is an isomorphism if
and only if the lower horizontal morphism is an isomorphism.  The equality follows from
\ref{item:GL2}.
\end{proof}

\begin{proof}[Proof of Theorem~\ref{thm:delta-count}]
\emph{Perverse sheaves.}
We construct nondecreasing functions \(\Phi^{\mathrm p}_{d}\) satisfying the asserted estimate for
perverse sheaves.  For \(d=1\), take \(\Phi^{\mathrm p}_{1}=\Phi^\Delta_{1}\) from
Corollary~\ref{cor:d-one}.

Assume \(d\ge2\) and the perverse case known for \(d-1\).  Let \(A\) be perverse on \(\Gm^d\) and put
\(c=\cu(A)\).
For each \(\lambda\in\widehat{\Gm(k_n)}\setminus E_{\Gm}(A)(k_n)\), Lemma~\ref{lem:fibre} and the inductive
hypothesis give
\[
\begin{aligned}
 \#\{\eta\in\widehat{\Gm^{d-1}(k_n)}:(\lambda,\eta)\in\Delta(A)(\ol\Ql)\}
 &=|\Delta(B_\lambda)(k_n)|\\
 &\le\Phi^{\mathrm p}_{d-1}(\cu(B_\lambda))\,q^{n(d-2)}\\
 &\le\Phi^{\mathrm p}_{d-1}
       \bigl(F_{d}(c)\bigr)q^{n(d-2)}.
\end{aligned}
\]
There are at most \(|\widehat{\Gm(k_n)}|\le q^n\) such \(\lambda\).  For the exceptional characters, the bounds
\(|\widehat{\Gm^{d-1}(k_n)}|\le q^{n(d-1)}\) and Lemma~\ref{lem:complexity-bounds}(iii) imply that
\[
        \#E_{\Gm}(A)(k_n)\,|\widehat{\Gm^{d-1}(k_n)}|
        \le
        F_{d}(c)q^{n(d-1)}.
\]
Consequently
\[
        |\Delta(A)(k_n)|
        \le
        \Bigl(\Phi^{\mathrm p}_{d-1}\bigl(F_{d}(c)\bigr)
        +F_{d}(c)\Bigr)q^{n(d-1)}.
\]
Define \(\Phi^{\mathrm p}_{d}\) to be the function in parentheses.  Since \(F_{d}\) and
\(\Phi^{\mathrm p}_{d-1}\) are nondecreasing, so is \(\Phi^{\mathrm p}_{d}\).  This proves the
asserted estimate for perverse sheaves.

\emph{Arbitrary complexes.}
The case \(d=1\) is Corollary~\ref{cor:d-one}.  Assume \(d\ge2\) and let
\(M\in\Db(\Gm^d,\ol\Ql)\).  Induction on the perverse amplitude, using the octahedral axiom, gives a
filtration of \(\Cone(\mathscr M_!(M)\to\mathscr M_*(M))\) whose graded pieces are
\[
        \Cone\bigl(\mathscr M_!({}^{p}H^{i}(M))\to\mathscr M_*({}^{p}H^{i}(M))\bigr)[-i],\qquad i\in\mathbb Z .
\]
The support of an iterated extension is contained in the union of the supports of its graded pieces, and a
shift preserves support, so \ref{item:GL2} gives
\[
        \Delta(M)\subseteq\bigcup_{i}\Delta({}^{p}H^{i}(M)).
\]
The perverse case gives
\[
        |\Delta({}^{p}H^{i}(M))(k_n)|
        \le\Phi^{\mathrm p}_{d}(\cu({}^{p}H^{i}(M)))\,q^{n(d-1)} .
\]
Lemma~\ref{lem:complexity-bounds}(i) gives
\[
        \sum_i\cu({}^{p}H^{i}(M))\le F_{d}(\cu(M)).
\]
Since a nonzero object has complexity at least \(1\) \ref{item:C2}, the \({}^{p}H^{i}(M)\)
are nonzero for at most \(F_{d}(\cu(M))\) values of \(i\), and each has complexity at most
\(F_{d}(\cu(M))\).  We may therefore take
\[
        \Phi^\Delta_{d}(C)
        =F_{d}(C)\,
        \Phi^{\mathrm p}_{d}\bigl(F_{d}(C)\bigr).
\]
This function is nondecreasing.
\end{proof}

\begin{proof}[Proof of Theorem~\ref{thm:stratified}]
We use the notation of \eqref{eq:cohomology-jump-loci}.

We argue by induction on \(d\).  For \(i=0\),
\(|\mathcal V_{d,0}(A)(k_n)|\le|\widehat{\Gm^d(k_n)}|\le q^{nd}\), so we may assume \(1\le i\le d\).  We use
repeatedly that, for every \(d'\ge1\), a perverse sheaf \(P\) on \(\Gm^{d'}\), and a Kummer character
\(\eta\), Artin affine vanishing and its Verdier dual give \(\HH^a_c(\Gm^{d'},P_\eta)=0\) for \(a<0\)
and \(\HH^a(\Gm^{d'},P_\eta)=0\) for \(a>0\) \cite[Th.~4.1.1, Cor.~4.1.2]{BBDG18},
\cite[Exp.~XIV, 3.1]{SGA4}.  The cohomological amplitude of a perverse sheaf on a \(d'\)-dimensional
variety gives \(\HH^a_c(\Gm^{d'},P_\eta)=\HH^a(\Gm^{d'},P_\eta)=0\) for \(|a|>d'\).

\emph{Base case \(d=1\).}  For \(i=1\),
\(\mathcal V_{1,1}(A)\subseteq\Delta(A)(\ol\Ql)\) by Artin vanishing on \(\Gm\), so
Corollary~\ref{cor:d-one} gives
\(|\mathcal V_{1,1}(A)(k_n)|\le\Phi^\Delta_{1}(\cu(A))\).
Together with the constant \(1\) covering the case \(i=0\), we may take \(\Psi_{1}=\Phi^\Delta_{1}+1\).

\emph{Inductive step.}  Let \(d\ge2\), assume the theorem for \(d-1\), and use the projections \(p\) and
\(q\) of \eqref{eq:coordinate-projections}.  For a Kummer character \(\lambda\), the complex
\(A\otimes q^*\cL_\lambda\) is a
perverse sheaf, and \(B_{!,\lambda}\), \(B_{*,\lambda}\), \(E_{\Gm}(A)\) are as in
\eqref{eq:relative-exceptional-set}.  Put \(c=\cu(A)\) and \(E=E_{\Gm}(A)\).
Lemma~\ref{lem:complexity-bounds}(iii) gives \(\#E\le F_{d}(c)\).  The projection formula and
Lemmas~\ref{lem:forget-supports}(2) and~\ref{lem:forget-supports-products}(2) give, for all Kummer
characters \(\lambda,\eta\) and every \(j\),
\[
        \HH^j_c(\Gm^d,A_{(\lambda,\eta)})
        \simeq\HH^j_c(\Gm^{d-1},B_{!,\lambda}\otimes\cL_\eta),
        \qquad
        \HH^j(\Gm^d,A_{(\lambda,\eta)})
        \simeq\HH^j(\Gm^{d-1},B_{*,\lambda}\otimes\cL_\eta).
\]

\emph{Non-exceptional \(\lambda\).}  If \(\lambda\notin E\), then
\(B_{!,\lambda}\simeq B_{*,\lambda}=B_\lambda\) is perverse and satisfies
\(\cu(B_\lambda)\le F_{d}(c)\) by Lemma~\ref{lem:fibre}.  The identifications give
\[
 \{\eta\in\widehat{\Gm^{d-1}}(\ol\Ql):(\lambda,\eta)\in\mathcal V_{d,i}(A)\}
 =\mathcal V_{d-1,i}(B_\lambda)
\]
for \(0\le i\le d-1\).  The set on the left is empty for \(i=d\).  For \(1\le i\le d-1\), the inductive
hypothesis and \(|\widehat{\Gm(k_n)}|\le q^n\) give
\[
        \Bigl|\mathcal V_{d,i}(A)(k_n)\cap
        \bigl((\widehat{\Gm(k_n)}\setminus E(k_n))\times\widehat{\Gm^{d-1}(k_n)}\bigr)\Bigr|
        \le \Psi_{d-1}\bigl(F_{d}(c)\bigr)q^{n(d-i)}.
\]
For \(i=d\), the left-hand side is zero.

\emph{Exceptional \(\lambda\).}  The sheaf \(A\otimes q^*\cL_\lambda\) is perverse, so
Lemma~\ref{lem:relative-amplitude} gives \(B_{!,\lambda}\in{}^pD^{[0,1]}(\Gm^{d-1})\) and
\(B_{*,\lambda}\in{}^pD^{[-1,0]}(\Gm^{d-1})\), with
\(\cu(B_{!,\lambda}),\cu(B_{*,\lambda})\le F_{d}(c)\) by Lemma~\ref{lem:complexity-bounds}(iii).  Write
\(P^r={}^pH^r(B_{!,\lambda})\) for \(r=0,1\).  These sheaves are perverse on \(\Gm^{d-1}\), with
\(\cu(P^r)\le F_{d}(c)\) by the same lemma.  For
\(\eta\in\widehat{\Gm^{d-1}}(\ol\Ql)\), the perverse spectral sequence
\[
 E_2^{a,r}=\HH^a_c(\Gm^{d-1},(P^r)_\eta)
 \Rightarrow\HH^{a+r}_c(\Gm^{d-1},(B_{!,\lambda})_\eta)
\]
and Artin vanishing for \(P^0\) and \(P^1\) give \(\HH^{a}_c(\Gm^{d-1},(B_{!,\lambda})_\eta)=0\) for
\(a<0\), together with the inclusion
\[
\begin{aligned}
 &\{\eta\in\widehat{\Gm^{d-1}}(\ol\Ql):
       \HH^{i}_c(\Gm^{d-1},(B_{!,\lambda})_\eta)\neq0\}\\
 &\qquad\subseteq
 \{\eta\in\widehat{\Gm^{d-1}}(\ol\Ql):\HH^{i}_c(\Gm^{d-1},(P^0)_\eta)\neq0\}\\
 &\qquad{}\cup
 \{\eta\in\widehat{\Gm^{d-1}}(\ol\Ql):\HH^{i-1}_c(\Gm^{d-1},(P^1)_\eta)\neq0\}.
\end{aligned}
\]
The first set on the right lies in
\(\mathcal V_{d-1,i}(P^0)\), and the second lies in \(\mathcal V_{d-1,i-1}(P^1)\) if \(i>1\).  For
\(i=1\), the second is contained in \(\widehat{\Gm^{d-1}(k_n)}\).  The inductive hypothesis therefore gives
\[
 \#\Bigl\{\eta\in\widehat{\Gm^{d-1}(k_n)}:
 \substack{\HH^i_c(\Gm^{d-1},(B_{!,\lambda})_\eta)\neq0\ \,\text{or}\\
           \HH^{-i}_c(\Gm^{d-1},(B_{!,\lambda})_\eta)\neq0}\Bigr\}
 \le\Bigl(2\Psi_{d-1}(F_{d}(c))+1\Bigr)q^{n(d-i)}.
\]

For ordinary cohomology, put \(Q^r={}^pH^r(B_{*,\lambda})\) for \(r=-1,0\).  These sheaves have complexity
at most \(F_{d}(c)\).  The analogous spectral sequence and Artin vanishing give
\(\HH^{a}(\Gm^{d-1},(B_{*,\lambda})_\eta)=0\) for \(a>0\), together with the inclusion
\[
\begin{aligned}
 &\{\eta\in\widehat{\Gm^{d-1}}(\ol\Ql):
       \HH^{-i}(\Gm^{d-1},(B_{*,\lambda})_\eta)\neq0\}\\
 &\qquad\subseteq
 \{\eta\in\widehat{\Gm^{d-1}}(\ol\Ql):\HH^{-i}(\Gm^{d-1},(Q^0)_\eta)\neq0\}\\
 &\qquad{}\cup
 \{\eta\in\widehat{\Gm^{d-1}}(\ol\Ql):\HH^{-i+1}(\Gm^{d-1},(Q^{-1})_\eta)\neq0\}.
\end{aligned}
\]
The same inductive estimates apply to \(Q^0\) and \(Q^{-1}\).  Hence
\[
 \#\Bigl\{\eta\in\widehat{\Gm^{d-1}(k_n)}:
 \substack{\HH^i(\Gm^{d-1},(B_{*,\lambda})_\eta)\neq0\ \,\text{or}\\
           \HH^{-i}(\Gm^{d-1},(B_{*,\lambda})_\eta)\neq0}\Bigr\}
 \le\Bigl(2\Psi_{d-1}(F_{d}(c))+1\Bigr)q^{n(d-i)}.
\]
Multiplying the sum of the two bounds above for \(B_{!,\lambda}\) and \(B_{*,\lambda}\) by
\(\#E\le F_{d}(c)\), the exceptional contribution is at most
\[
 2F_{d}(c)
 \Bigl(2\Psi_{d-1}(F_{d}(c))+1\Bigr)q^{n(d-i)}.
\]

Combining the non-exceptional and exceptional estimates, we may take
\[
\begin{aligned}
        \Psi_{d}(c)
        ={}&1+\Psi_{d-1}\bigl(F_{d}(c)\bigr)\\
        &+2F_{d}(c)
        \Bigl(2\Psi_{d-1}\bigl(F_{d}(c)\bigr)+1\Bigr).
\end{aligned}
\]
The term \(1\) gives the estimate for \(i=0\).  Since \(F_{d}\) and \(\Psi_{d-1}\) are nondecreasing,
so is \(\Psi_{d}\).  This completes the induction.
\end{proof}

\begin{rem}
Theorem~\ref{thm:stratified} should be compared with the recent uniform variants of the Katz--Laumon and
Fouvry--Katz stratification theorems proved by Bonolis, Kowalski, and Woo
\cite{BonolisKowalskiWoo2026} for exponential sums in families.  Those results concern stratifications
of the additive Fourier parameter space.  Here the parameter space is the Gabber--Loeser character
scheme and the estimate counts its finite-field Kummer characters.  The estimate is uniform over
perverse sheaves of bounded complexity while the characteristic and \(\ell\) vary.  This is the form used in
\S\ref{sec:unramified} to prove Corollary~\ref{cor:equidistribution}.
\end{rem}

\section{Proofs of Theorem~\ref{thm:unram-count} and Corollary~\ref{cor:equidistribution}}
\label{sec:unramified}

Let \(M_0\in\Perv_{\mathrm{int}}^{\mathrm{ari}}(\Gm^d/k)\) be arithmetically semisimple, and let
\(M=M_{0,\bar k}\).  By \S\ref{sec:tannakian}, \(M\) is semisimple in
\(\Perv_{\mathrm{int}}(\Gm^d)\).  We use the unramified locus \(X(M_0)\) of \ref{item:T5} in
\S\ref{sec:tannakian} and let \(L=M\oplus M^\vee\).  Then
\(L\) is perverse, self-dual, and lies in \(\Perv_{\mathrm{int}}(\Gm^d)\), with
\(\cu(L)\le F_{d}(\cu(M))\) by Lemma~\ref{lem:complexity-bounds}(ii).

Let \(\alpha_m\) and \(C_m\) be as in \ref{item:T5}.  The twist of \(\alpha_m\) by \(\cL_\chi\) is
denoted \(\alpha_{m,\chi}\).

\begin{lem}
\label{lem:weakly-unramified-inclusion}
Let \(M_0\in\Perv_{\mathrm{int}}^{\mathrm{ari}}(\Gm^d/k)\) be arithmetically semisimple, let
\(M=M_{0,\bar k}\), and let \(L=M\oplus M^\vee\).  Then \(X_w(L)\subseteq X(M_0)\).
\end{lem}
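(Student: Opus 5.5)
The plan is to use the criterion recalled in \ref{item:T5}: \(X_w(M)\cap\bigcap_{m\ge2}\mathcal N(C_m)\subseteq X(M)\).  Fix \(\chi\in X_w(L)\), so that \(\varphi_{L,\chi}\) is an isomorphism.  First, \(X_w(L)\subseteq X_w(M)\): \(M\) is a direct summand of \(L\), and \(\varphi_{L,\chi}=\varphi_{M,\chi}\oplus\varphi_{M^\vee,\chi}\) by naturality, so \(\varphi_{M,\chi}\) is an isomorphism.  It then suffices to show \(\chi\in\mathcal N(C_m)\) for every \(m\ge2\), i.e.\ that \(R\Gamma_c\) and \(R\Gamma\) of every \({}^pH^i(C_m)\) vanish after twisting by \(\cL_\chi\).

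Step one is the fibrewise statement.  By induction on \(m\), using Lemma~\ref{lem:comparison-monoidal} together with the factorization \(\alpha_m=c_{L^{\ast_*(m-1)},L}\circ(\alpha_{m-1}\ast_!\operatorname{id}_L)\), the composite
\[
R\Gamma_c\bigl(\Gm^d,(L^{\ast_!m})_\chi\bigr)\xrightarrow{R\Gamma_c(\alpha_{m,\chi})}R\Gamma_c\bigl(\Gm^d,(L^{\ast_*m})_\chi\bigr)\xrightarrow{\varphi}R\Gamma\bigl(\Gm^d,(L^{\ast_*m})_\chi\bigr)
\]
is identified, via the K\"unneth maps \(\kappa_!,\kappa_*\), with \(\varphi_{L,\chi}^{\otimes m}\), hence is an isomorphism.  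Using \ref{item:GL1}, this says that the derived fibre at \(\chi\) of \(\mathscr M_!(L^{\ast_!m})\to\mathscr M_*(L^{\ast_*m})\) is an isomorphism.

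Step two passes from a single point to a neighbourhood.  The Mellin transforms are bounded coherent complexes on the regular noetherian scheme \(\widehat{\Gm^d}\) (\ref{item:GL6}).  If a morphism of such complexes has cone \(Q\) with \(Li_\chi^*Q=0\), then by derived Nakayama (look at the top nonzero cohomology sheaf of \(Q\) near \(\chi\)) the complex \(Q\) vanishes on an open neighbourhood of \(\chi\).  Let \(Z\) be the closed support of this cone.  Then \(\chi\notin Z\), and \ref{item:GL5}, applied to \(f=\alpha_m\), shows that \(\alpha_m\) is an isomorphism in \(\Db(\Gm^d)_Z\).  So \(C_m\) becomes zero in \(\Db(\Gm^d)_Z\).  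Since localization is \(t\)-exact by \ref{item:GL4}, each \({}^pH^i(C_m)\) is also zero there, that is, \(\Supp\mathscr M_!({}^pH^iC_m)\) and \(\Supp\mathscr M_*({}^pH^iC_m)\) lie in \(Z\).  Taking derived fibres at \(\chi\notin Z\) via \ref{item:GL1} gives \(R\Gamma_c(\Gm^d,({}^pH^iC_m)_\chi)=0=R\Gamma(\Gm^d,({}^pH^iC_m)_\chi)\).  Hence \(\chi\in\mathcal N(C_m)\), and the criterion gives \(\chi\in X(M)=X(M_0)\).

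The main obstacle is step one: checking that the fibrewise identification really is compatible with \(\alpha_m\) defined as an iterated composite, including the associativity constraints.  I would handle this by the identity \(\alpha_m=\varphi_{\mu_m}(L^{\boxtimes m})\) recorded in \ref{item:T5}, applying \eqref{eq:forget-supports-composition-lisse} once to \(p=\mu_m\) and using Lemma~\ref{lem:forget-supports-products}(1) for the \(m\)-fold external product.  A secondary point is the Nakayama step on a possibly non-connected character scheme.  Since the statement is local on the affine connected component containing \(\chi\), \ref{item:GL6} is enough there.
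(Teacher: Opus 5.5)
Your proposal is correct and follows the paper's proof. Both arguments use the fibrewise identification of the Mellin comparison for \(\alpha_m\) with \(\varphi_{L,\chi}^{\otimes m}\) via Lemma~\ref{lem:comparison-monoidal}, then derived Nakayama, then \ref{item:GL5} applied to \(\alpha_m\), then \(t\)-exactness of localization, and finally the criterion of \ref{item:T5}. The only variation is the choice of closed set in \ref{item:GL5}. You localize at \(Z\), the support of the cone of \(\mathscr M_!(L^{\ast_!m})\to\mathscr M_*(L^{\ast_*m})\), and use Nakayama only at the fixed \(\chi\). The paper applies Nakayama at every closed point of \(\widehat{\Gm^d}\setminus\Delta(L)\) and uses the Jacobson property to get the uniform inclusion of that support in \(\Delta(L)\), then localizes at \(\Delta(L)\). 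Your choice avoids the Jacobson step and is enough for the statement.
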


\begin{proof}
With the Mellin transforms of \S\ref{sec:mellin}, write
\(\gamma_m\colon\mathscr M_!(L^{\ast_!m})\to\mathscr M_*(L^{\ast_*m})\) for the morphism induced by
\(\alpha_m\) and the transformation \(\mathscr M_!\to\mathscr M_*\).

\emph{Fibres of \(\gamma_m\).}  For \(\chi\in\widehat{\Gm^d}(\ol\Ql)\), let \(\Theta_{m,\chi}=Li_\chi^*\gamma_m\),
viewed through \ref{item:GL1} as
\[
        R\Gamma_c\bigl(\Gm^d,(L^{\ast_!m})_\chi\bigr)
        \xrightarrow{\ R\Gamma_c(\alpha_{m,\chi})\ }R\Gamma_c\bigl(\Gm^d,(L^{\ast_*m})_\chi\bigr)
        \longrightarrow R\Gamma\bigl(\Gm^d,(L^{\ast_*m})_\chi\bigr),
\]
where the second morphism forgets supports.  Put \(\Theta_{1,\chi}=\varphi_{L,\chi}\).  Naturality of the
K\"unneth isomorphisms and Lemma~\ref{lem:comparison-monoidal} give
\[
        \Theta_{m,\chi}=\Theta_{m-1,\chi}\otimes\varphi_{L,\chi}.
\]
Thus \(\Theta_{m,\chi}=\varphi_{L,\chi}^{\otimes m}\) under the iterated K\"unneth identifications.

\emph{Support of \(\Cone(\gamma_m)\).}  The complex \(\Cone(\gamma_m)\) is coherent because the Mellin
transforms take values in coherent complexes (\S\ref{sec:mellin}). Since \(Li_\chi^*\) is triangulated, one
has \(Li_\chi^*\Cone(\gamma_m)\simeq\Cone(\Theta_{m,\chi})\).  For \(\chi\notin\Delta(L)(\ol\Ql)\), the
morphism \(\varphi_{L,\chi}\), hence \(\Theta_{m,\chi}\), is an isomorphism, so the derived fibre of
\(\Cone(\gamma_m)\) vanishes at every character outside \(\Delta(L)\).  Put
\(U=\widehat{\Gm^d}\setminus\Delta(L)\) and \(Q_m=\Cone(\gamma_m)|_U\).

By \ref{item:GL6}, each connected component of \(\widehat{\Gm^d}\) is noetherian and Jacobson, and its
closed points are its \(\ol\Ql\)-points.  The components are open, and both properties are local, so
\(\widehat{\Gm^d}\) is locally noetherian and Jacobson, and its closed points are its
\(\ol\Ql\)-points.  The open subscheme \(U\) is therefore Jacobson, and its closed points are closed in
\(\widehat{\Gm^d}\), hence are characters outside \(\Delta(L)\) \cite[Tag~005X]{stacks-project}.

Let \(\chi\) be a closed point of \(U\), write \(A_\chi=\mathcal O_{U,\chi}\), and let
\((Q_m)_\chi\) denote the stalk of \(Q_m\) at \(\chi\), viewed as a complex of \(A_\chi\)-modules.
Write \(k(\chi)\) for the residue field of \(A_\chi\).  There is a canonical identification
\[
        Li_\chi^*Q_m\simeq (Q_m)_\chi\otimes^L_{A_\chi}k(\chi).
\]
The left-hand side vanishes by the two preceding paragraphs.
If \((Q_m)_\chi\) were nonzero, there would be a largest integer \(r\) with
\(H^r((Q_m)_\chi)\neq0\), because \((Q_m)_\chi\) is bounded with finitely generated cohomology
modules over the noetherian local ring \(A_\chi\).  In the hyper-Tor spectral sequence
\[
 E_2^{-a,b}=\Tor^{A_\chi}_a\bigl(H^b((Q_m)_\chi),k(\chi)\bigr)
 \Longrightarrow H^{b-a}\bigl(Li_\chi^*Q_m\bigr),
\]
the only possibly nonzero term in total degree \(r\) is \(H^r((Q_m)_\chi)\otimes_{A_\chi}k(\chi)\).
Nakayama's lemma then gives
\(H^r(Li_\chi^*Q_m)\simeq H^r((Q_m)_\chi)\otimes_{A_\chi}k(\chi)\neq0\), a contradiction.  Hence
\((Q_m)_\chi=0\) at every closed point \(\chi\) of \(U\).  The support of \(Q_m\) is a finite union
of supports of coherent cohomology sheaves, hence closed in \(U\).  A nonempty closed subset of the
Jacobson scheme \(U\) contains a closed point, so \(Q_m=0\).  Therefore
\[
        \Supp\Cone(\gamma_m)\subseteq\Delta(L).
\]

\emph{Localization.}  The morphism \(\gamma_m\) is an isomorphism outside \(\Delta(L)\), so \ref{item:GL5}
applied to \(\alpha_m\) shows that \(\alpha_m\) is an isomorphism in \(\Db(\Gm^d)_{\Delta(L)}\).
Equivalently, \(C_m=0\) there, that is, by \ref{item:GL4}, \(\Supp\mathscr M_!(C_m)\) and
\(\Supp\mathscr M_*(C_m)\) are contained in \(\Delta(L)\).  Since the localization functor is perverse
\(t\)-exact, the image of \({}^pH^j(C_m)\) in the localized heart is zero.  By \ref{item:GL4}, both Mellin
transforms of \({}^pH^j(C_m)\) are therefore supported in \(\Delta(L)\).

Fix \(\chi\notin\Delta(L)(\ol\Ql)\).  The preceding support statement and \ref{item:GL1} give
\[
        R\Gamma_c\bigl(\Gm^d,({}^pH^jC_m)_\chi\bigr)
        =R\Gamma\bigl(\Gm^d,({}^pH^jC_m)_\chi\bigr)=0
        \qquad(m\ge2,\ j\in\mathbb Z).
\]

By \eqref{eq:weak-locus}, every \(\chi\in X_w(L)\) lies outside \(\Delta(L)(\ol\Ql)\).  Since
\(L=M\oplus M^\vee\), one also has \(X_w(L)\subseteq X_w(M)\).  Fix \(\chi\in X_w(L)\).  The vanishing
above and the definitions of \S\ref{sec:negligible} give \(\chi\in\mathcal N({}^pH^jC_m)\) for all
\(j\) and hence \(\chi\in\mathcal N(C_m)\) for all \(m\ge2\).  Thus
\(\chi\in X_w(M)\cap\bigcap_{m\ge2}\mathcal N(C_m)\), and \ref{item:T5} shows that \(\chi\) is
unramified for \(M\).  Hence \(X_w(L)\subseteq X(M_0)\).
\end{proof}

\begin{proof}[Proof of Theorem~\ref{thm:unram-count}]
By \eqref{eq:weak-locus} and Lemma~\ref{lem:weakly-unramified-inclusion},
\(\widehat{\Gm^d}(\ol\Ql)\setminus X(M_0)\subseteq\Delta(L)(\ol\Ql)\), and
Theorem~\ref{thm:delta-count} applied to \(L\) gives
\[
        |(\widehat{\Gm^d}(\ol\Ql)\setminus X(M_0))(k_n)|
        \le|\Delta(L)(k_n)|
        \le\Phi^\Delta_{d}(\cu(L))\,q^{n(d-1)}
        \le\Phi^\Delta_{d}\bigl(F_{d}(\cu(M))\bigr)q^{n(d-1)}.
\]
We may therefore take
\[
        \Phi^{\mathrm{ur}}_{d}(C)
        =\Phi^\Delta_{d}\bigl(F_{d}(C)\bigr).\qedhere
\]
\end{proof}

\begin{lem}
\label{lem:identity-stalk}
For \(x\in\Gm^d\), let \(\delta_x\) denote the skyscraper sheaf at \(x\), and let \(1\in\Gm^d\) be the
identity.
Let \(P\) be a semisimple perverse sheaf on \(\Gm^d\), and suppose that \(\delta_1\) is not a direct
summand of \(P\).  Then
\[
        \cH^j(P)_1=0\qquad(j\ge0).
\]
\end{lem}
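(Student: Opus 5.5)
Since \(P\) is semisimple, it decomposes as a finite direct sum of simple perverse sheaves, and stalks commute with direct sums.  It therefore suffices to treat a simple perverse sheaf \(P\) on \(\Gm^d\) with \(P\not\simeq\delta_1\), and to show \(\cH^j(P)_1=0\) for \(j\ge0\).  By the classification of simple perverse sheaves \cite{BBDG18}, \(P=j_{!*}(\mathcal F[\dim Z])\), where \(Z\subseteq\Gm^d\) is an irreducible closed subvariety, \(j\colon U\hookrightarrow Z\) is a smooth dense open, and \(\mathcal F\) is an irreducible lisse sheaf on \(U\).  I will split according to whether \(\dim Z=0\) or \(\dim Z\ge1\).

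If \(\dim Z=0\), then \(Z=\{x\}\) is a closed point and \(P=\delta_x\) (for \(\mathcal F\) irreducible of rank one over the algebraically closed field).  The hypothesis \(P\not\simeq\delta_1\) forces \(x\neq1\), so \(1\notin\Supp P\) and all stalks at \(1\) vanish.

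If \(\dim Z=e\ge1\), I will use the support condition for the intermediate extension.  This condition is strict: for \(j_{!*}\) one has \(\dim\Supp\cH^{i}(P)<-i\) for every \(i>-e\) \cite[Prop.~2.1.11, Cor.~4.3.2]{BBDG18}.  For \(i\ge0\) this would require \(\dim\Supp\cH^i(P)<-i\le0\), which is impossible for a nonempty support, so \(\cH^i(P)=0\) for all \(i\ge0\).  Note also that \(\cH^{-e}(P)\) lives in degree \(-e<0\), so the generic lisse part never contributes in nonnegative degrees.  In particular the stalk at \(1\) vanishes for \(j\ge0\), and here the hypothesis \(P\not\simeq\delta_1\) is not even needed.

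The only point needing care is invoking the strict inequality characterizing \(j_{!*}\), rather than the weak perversity bound \(\dim\Supp\cH^{i}\le-i\).  The weak bound alone would still allow a zero-dimensional \(\cH^0\), namely a skyscraper constituent.  Strictness is exactly the statement that a simple perverse sheaf with positive-dimensional support has no subobject or quotient supported on a point.  Alternatively, one can see this from the fact that \({}^pH^0(i_1^*P)\) would give a quotient \(P\to\delta_1\), which contradicts simplicity together with \(P\not\simeq\delta_1\).
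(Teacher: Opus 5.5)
Your proof is correct, but your main argument follows a different route from the paper's; the alternative in your last paragraph is essentially the paper's proof.

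\emph{Your main route.} You use two inputs from BBD:
\begin{itemize}
\item the structure theorem for simple perverse sheaves, namely \(i_{Z,*}j_{!*}(\mathcal F[e])\) with \(\mathcal F\) irreducible lisse on a smooth dense open \(U\) of \(Z\);
\item the characterization of \(j_{!*}\) by the strict condition \(i_F^*P\in{}^pD^{\le-1}(F)\) on \(F=Z\setminus U\).
\end{itemize}
Together with \(\cH^i(P)|_U=0\) for \(i\neq-e\), these give \(\dim\Supp\cH^i(P)<-i\) for \(i>-e\). So \(\cH^i(P)=0\) for every \(i\ge0\) on all of \(\Gm^d\) as soon as \(e\ge1\). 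The hypothesis on \(\delta_1\) only excludes the punctual case.

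\emph{The paper's route.} It avoids both the classification and the \(j_{!*}\) characterization.
\begin{itemize}
\item The weak perverse support condition gives \(\cH^j(Q)=0\) for \(j>0\).
\item If \(\cH^0(Q)_1\neq0\), adjunction and truncation give a nonzero morphism of perverse sheaves \(Q\to\delta_1\otimes\cH^0(Q)_1\).
\item Simplicity makes this morphism a monomorphism, so \(\Supp Q\subseteq\{1\}\), a contradiction.
\end{itemize}

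\emph{What each buys.} Both arguments give the same vanishing, and the paper's works equally well at any point. Yours makes transparent that the vanishing is a structural property of simple perverse sheaves with positive-dimensional support. The paper's is more self-contained: it needs only the adjunction \(i_1^*\dashv i_{1,*}\), the right \(t\)-exactness of \(i_1^*\), and simplicity.
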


\begin{proof}
It suffices to consider a simple summand \(Q\) of \(P\).  If \(Q\) is punctual, then
\(Q\simeq\delta_x\) for some \(x\neq 1\), so its stalk at \(1\) vanishes.  Suppose that
\(\dim\Supp Q>0\).  The perverse support condition gives \(\cH^j(Q)=0\) for \(j>0\).  If
\(\cH^0(Q)_1\neq0\), the adjunction morphism for the inclusion \(i_1:\{1\}\hookrightarrow\Gm^d\) and
truncation give a nonzero morphism of perverse sheaves
\[
        Q\longrightarrow i_{1,*}i_1^*Q
        \longrightarrow i_{1,*}\tau^{\ge0}i_1^*Q
        =\delta_1\otimes_{\ol\Ql}\cH^0(Q)_1.
\]
Since \(Q\) is simple, this morphism is a monomorphism.  Hence \(\Supp Q\subseteq\{1\}\), a contradiction.
Thus \(\cH^0(Q)_1=0\).
\end{proof}

We use the following form of the Weyl criterion.  Let \(G\) be reductive and let \(K_G\) be a maximal
compact subgroup of \(G(\mathbb C)\).  Families of conjugacy classes
\(\theta_j(x)\in K_G^\sharp\), indexed by finite sets \(X_j\), are
\(\mu_G^\sharp\)-equidistributed if and only if
\[
        \frac{1}{|X_j|}\sum_{x\in X_j}\operatorname{Tr}\rho\bigl(\theta_j(x)\bigr)\longrightarrow0
\]
for every nontrivial irreducible algebraic representation \(\rho\) of \(G\).  

\begin{proof}[Proof of Corollary~\ref{cor:equidistribution}]
The argument follows the proof of \cite[Th.~4.19]{ForeyFresanKowalski2021}.
Put \(C=\sup_j\cu(M_j)\), and for every \(j\) write \(q_j=\#k_j\).
Theorem~\ref{thm:unram-count}, applied to \(M_{j,0}\), gives
\begin{equation}
\label{eq:cor-unram}
        |(\widehat{\Gm^d}(\ol\Ql)\setminus X(M_{j,0}))(k_j)|
        \le\Phi^{\mathrm{ur}}_{d}(C)\,q_j^{d-1},
\end{equation}
so \(|X_j|=(q_j-1)^d+O(q_j^{d-1})\), with constants depending only on \(d\) and \(C\).  Since
\(\#k_j\to\infty\), there is \(j_0\) such that \(X_j\neq\varnothing\) and \(|X_j|\asymp q_j^d\) for
every \(j\ge j_0\).  The finitely many \(j<j_0\) do not affect the limit and are omitted below.

Fix \(j\ge j_0\), and write \(k=k_j\), \(\bar k=\bar k_j\), \(q=q_j\), \(M_0=M_{j,0}\), \(M=M_j\), and
\(X=X_j\).  By \S\ref{sec:tannakian}, \(M\) is semisimple in \(\Perv_{\mathrm{int}}(\Gm^d)\).

Apply the Weyl criterion just recalled to the sets \(X_j\) and the unitary Frobenius conjugacy classes
\(\theta_j(\chi)=\Theta_{M_{j,0},k_j}(\chi)\) of \ref{item:T5}.  It suffices to show that, for every
nontrivial irreducible algebraic representation \(\rho\) of the common reductive group \(G\), the sums
in the criterion tend to zero.  We prove the stronger estimate
\begin{equation}
\label{eq:cor-weyl}
        \frac{1}{|X|}\sum_{\chi\in X}\operatorname{Tr}\rho\bigl(\Theta_{M_0,k}(\chi)\bigr)\ll_\rho q^{-1/2},
\end{equation}
with an implied constant independent of \(k\).
For the remainder of the proof, constants may also depend on \(G\) and on \(\rho\), but not on \(j\).
By \ref{item:T3}, the object \(N_0=\rho(M_0)\) is arithmetically semisimple and pure of weight zero.
Its geometric base change \(N\) is a direct summand of
\(L^{\ast_{\mathrm{int}}m(\rho)}\), where \(L=M\oplus M^\vee\) and \(m(\rho)\ge1\) depends only on
\(\rho\) and \(G\).  Put
\[
        C_\rho=
        F_{d,m(\rho),\mathrm{int}}\bigl(F_{d}(C)\bigr),
\]
with \(F_{d,m(\rho),\mathrm{int}}\) the convolution-power bound of
Corollary~\ref{cor:convolution-complexity} and \(F_{d}\) the function of
Lemma~\ref{lem:complexity-bounds}.  The direct-summand estimate \ref{item:C4} and
Corollary~\ref{cor:convolution-complexity} give \(\cu(N)\le C_\rho\).  This constant is independent of
\(j\) and of \(k_j\).  Write
\(\tau_i(\chi)=\operatorname{Tr}(\operatorname{Fr}_k\mid
\HH^i_c(\Gm^d,N_\chi))\), with \(\operatorname{Fr}_k\) the geometric Frobenius of \(k\).  By
\ref{item:T5}, one has \(\tau_i(\chi)=0\) for \(i\neq0\) and
\(\operatorname{Tr}\rho(\Theta_{M_0,k}(\chi))=\tau_0(\chi)\) for every \(\chi\in X\).

For \(0<|i|\le d\) put
\(\mathcal A_i=\{\chi\in\widehat{\Gm^d(k)}:\HH^i_c(\Gm^d,N_\chi)\neq0\}\).  By the definition of
complexity \ref{item:C1}, the tensor-product estimate
\ref{item:C3}, and Lemma~\ref{lem:kummer-complexity}, there is a constant \(B_\rho\), independent
of \(j\) and of \(k_j\), such that
\[
        \dim\HH^i_c(\Gm^d,N_\chi)\le\cu(N_\chi)\le B_\rho
        \qquad(i\in\mathbb Z,\ \chi\in\widehat{\Gm^d}(\ol\Ql)).
\]
We use the following three estimates.
\begin{enumerate}
\item[(a)] \(\mathcal A_i\subseteq\mathcal V_{d,|i|}(N)\), so Theorem~\ref{thm:stratified} gives
\(|\mathcal A_i|\le\Psi_{d}(C_\rho)\,q^{d-|i|}\).
\item[(b)] As \(N_0\) is pure of weight zero, \(\HH^i_c(\Gm^d,N_\chi)\) has weights \(\le i\)
\cite[3.3.1]{Del80}, \cite[5.1.13]{BBDG18}, so
\(|\tau_i(\chi)|\le B_\rho\,q^{i/2}\).
\item[(c)] \(|\widehat{\Gm^d(k)}\setminus X|\le\Phi^{\mathrm{ur}}_{d}(C)q^{d-1}\) by
\eqref{eq:cor-unram}.
\end{enumerate}

For \(i\neq0\), the function \(\tau_i\) is supported on
\(\mathcal A_i\subseteq\widehat{\Gm^d(k)}\setminus X\).  Hence
\[
\begin{aligned}
        \frac{1}{|X|}\sum_{\chi\in X}\tau_0(\chi)
        =\ &\frac{1}{|X|}\sum_{\chi\in\widehat{\Gm^d(k)}}\sum_{|i|\le d}(-1)^i\tau_i(\chi)
        -\frac{1}{|X|}\sum_{0<|i|\le d}(-1)^i\sum_{\chi\in\mathcal A_i}\tau_i(\chi)\\
        &-\frac{1}{|X|}\sum_{\chi\in\widehat{\Gm^d(k)}\setminus X}\tau_0(\chi).
\end{aligned}
\]
By the Grothendieck--Lefschetz trace formula \(\sum_{|i|\le d}(-1)^i\tau_i(\chi)=\sum_{x\in \Gm^d(k)}
t_{N_0}(x;k)\,\chi(x)\), where \(t_{N_0}\) denotes the trace function of the arithmetic object
\(N_0\).  Summing over
\(\chi\in\widehat{\Gm^d(k)}\) and using
\(\sum_\chi\chi(x)=|\widehat{\Gm^d(k)}|\) for \(x=1\) and \(\sum_\chi\chi(x)=0\) otherwise, the first term
equals \((|\widehat{\Gm^d(k)}|/|X|)\,t_{N_0}(1;k)\).

It remains to bound the identity contribution \(t_{N_0}(1;k)\).  By \ref{item:T1}, the tannakian unit is
the skyscraper sheaf \(\delta_1\) at the identity.  Since
\(G^{\mathrm{ari}}_M=G^{\mathrm{geo}}_M=G\) and \(\rho\) is nontrivial and irreducible,
\(\operatorname{Hom}(\delta_1,N)=0\) in \(\langle M\rangle\).  Both inclusions
\(\langle M\rangle\subseteq\Perv_{\mathrm{int}}(\Gm^d)\subseteq\Perv(\Gm^d)\) are full, so \(N\) has no
direct summand isomorphic to \(\delta_1\).  By \ref{item:T3} and \S\ref{sec:tannakian}, \(N\) is semisimple.
Lemma~\ref{lem:identity-stalk} therefore gives \(\cH^j(N)_1=0\) for \(j\ge0\).  Thus only
degrees \(j\le-1\) contribute to \(t_{N_0}(1;k)\).  Since \(N_0\) is pure of weight zero, the Frobenius
eigenvalues on \(\cH^j(N)_1\) have weights at most \(j\) \cite[1.2]{Del80}, \cite[5.1.8]{BBDG18} and
therefore have absolute value at most \(q^{-1/2}\) in these degrees.  Consequently,
\[
        |t_{N_0}(1;k)|
        \le q^{-1/2}\sum_{j\le-1}\dim\cH^j(N)_1
        \le F_{d}(C_\rho)q^{-1/2},
\]
because
\[
 \sum_{j\le-1}\dim\cH^j(N)_1
 \le F_{d}(\cu(N))
 \le F_{d}(C_\rho),
\]
where the first inequality is Lemma~\ref{lem:complexity-bounds}(i) and the second is monotonicity.  As
\(|\widehat{\Gm^d(k)}|\asymp|X|\), the first term is \(\ll_\rho q^{-1/2}\).  By (a) and (b) the
second term is
\[
        \ll_\rho\frac{1}{q^d}\sum_{1\le i\le d}q^{d-i}\,q^{i/2}\ll_\rho q^{-1/2},
\]
the terms with \(i<0\) vanishing by Artin affine vanishing.  By (b) and (c) the third
term is \(\ll_\rho q^{d-1}\cdot B_\rho/q^d\ll_\rho q^{-1}\).  This proves \eqref{eq:cor-weyl}.

The trivial representation gives average one, so the Weyl criterion yields the equidistribution of the
classes \(\Theta_{M_0,k}(\chi)\), \(\chi\in X(M_0)(k)\), in \(K_G^\sharp\) for the measure
\(\mu_G^\sharp\) as \(\#k_j\to\infty\).
\end{proof}

\appendix

\section{Compatibilities of the forget-supports transformation}
\label{app:forget-supports}

The transformation \(\varphi_f\) is defined in \eqref{eq:forget-supports}.
All morphisms in this appendix are separated and of finite type over \(\bar k\).  For
\(f:X\to S\), \(g:Y\to T\), \(K\in\Db(X)\), and \(K'\in\Db(Y)\), let
\[
 \kappa_!:Rf_!K\boxtimes Rg_!K'\xrightarrow{\sim}R(f\times g)_!(K\boxtimes K')
\]
be the K\"unneth isomorphism \cite[Exp.~XVII, Th.~5.4.3]{SGA4}.  Let
\[
 \kappa_*:Rf_*K\boxtimes Rg_*K'\longrightarrow R(f\times g)_*(K\boxtimes K')
\]
be the morphism adjoint to the product of the counits.

\begin{lem}
\label{lem:forget-supports}
\begin{enumerate}
\item The transformation \(\varphi_f\) does not depend on the compactification.  Under the canonical
identifications, \(\varphi_f=\operatorname{id}\) for \(f\) proper, and \(\varphi_f=\varphi_j\) for \(f=j\)
an open immersion.
\item For \(p\colon X\to Y\) and \(r\colon Y\to Z\), under the canonical isomorphisms
\(R(rp)_!\simeq Rr_!\,Rp_!\) and \(R(rp)_*\simeq Rr_*\,Rp_*\),
\[
        \varphi_{rp}(K)=\varphi_r\bigl(Rp_*K\bigr)\circ Rr_!\bigl(\varphi_p(K)\bigr).
\]
\end{enumerate}
\end{lem}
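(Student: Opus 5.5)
For (1), I will compare two Nagata compactifications \(f=\bar f_1 j_1=\bar f_2 j_2\) with \(j_i\colon X\hookrightarrow\bar X_i\). Since any two compactifications are dominated by a third, I may take the closure of the diagonal image of \(X\) in \(\bar X_1\times_Y\bar X_2\). It therefore suffices to treat the case of a proper morphism \(\pi\colon\bar X_2\to\bar X_1\) with \(\pi j_2=j_1\) and \(\bar f_2=\bar f_1\pi\). In that case \(j_{1,!}\simeq\pi_*j_{2,!}\), because \(\pi\) is proper and \(R(\pi j_2)_!=R\pi_!j_{2,!}\). Likewise \(Rj_{1,*}\simeq R\pi_*Rj_{2,*}\). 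I will check that under these identifications \(\varphi_{j_1}=R\pi_*(\varphi_{j_2})\). Both morphisms \(j_{1,!}K\to Rj_{1,*}K\) correspond, under the adjunction \(\operatorname{Hom}(j_{1,!}K,Rj_{1,*}K)\simeq\operatorname{Hom}(K,j_1^*j_{1,!}K)\simeq\operatorname{Hom}(K,K)\), to morphisms \(K\to K\). Restricting to \(X\) along \(j_1^*=j_2^*\pi^*\) shows that each corresponds to \(\operatorname{id}_K\): one uses \(j_2^*\pi^*R\pi_*\simeq j_2^*\), which holds because \(\pi\) is an isomorphism over \(j_1(X)\) (since \(j_2(X)=\pi^{-1}(j_1(X))\) for the closure construction, or more generally by base change along the open \(j_1\)). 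Applying \(R\bar f_{1,*}\) then gives independence. The special cases are read off from particular compactifications: for \(f\) proper take \(j=\operatorname{id}\), where \(\varphi_{\operatorname{id}}=\operatorname{id}\); for \(f=j\) open take \(\bar f=\operatorname{id}\).

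For (2), I will choose compatible compactifications. First compactify \(r=\bar r\,j_Y\) with \(j_Y\colon Y\hookrightarrow\bar Y\). Then compactify the composite \(X\to Y\hookrightarrow\bar Y\) as \(j_Y p=\bar p\,j_X\), with \(j_X\colon X\hookrightarrow\bar X\) and \(\bar p\colon\bar X\to\bar Y\) proper. Let \(X'=\bar p^{-1}(Y)\), \(j'\colon X\hookrightarrow X'\), and \(p'\colon X'\to Y\), which is proper. Then \(j_X\) factors as \(X\hookrightarrow X'\hookrightarrow\bar X\), and the composite \(rp\) has the compactification \((\bar r\bar p)\circ j_X\). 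By (1) and the open-immersion case, the statement reduces to two claims. The first is the transitivity of the forget-supports maps for the composite of open immersions \(j_X=j''\circ j'\), with \(j''\colon X'\hookrightarrow\bar X\):
\[
\varphi_{j_X}=\varphi_{j''}(Rj'_*K)\circ j''_!(\varphi_{j'}(K)).
\]
The second is the base-change compatibility \(R\bar p_*\,j''_!\simeq j_{Y,!}\,Rp'_*\) for the proper \(\bar p\) over the open \(Y\), together with \(R\bar p_*Rj''_*\simeq Rj_{Y,*}Rp'_*\), and with \(\varphi_{j''}\) mapping to \(\varphi_{j_Y}\). The transitivity identity I will verify by adjunction: both sides restrict to \(\operatorname{id}_K\) on \(X\), and \(\operatorname{Hom}(j_{X,!}K,-)\simeq\operatorname{Hom}(K,j_X^*-)\). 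The base-change part is proper base change along the open immersion \(j_Y\). The same adjunction argument identifies \(R\bar p_*\varphi_{j''}\) with \(\varphi_{j_Y}(Rp'_*-)\).

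I expect the main obstacle to be the bookkeeping. The canonical isomorphisms \(R(rp)_!\simeq Rr_!Rp_!\) must be built from exactly these compactifications, and all the squares must commute, not merely have isomorphic vertices. At each such point I will rely on the uniqueness principle that a morphism out of \(j_!K\) is determined by its restriction to the open set.
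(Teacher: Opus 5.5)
Your proposal is correct and is essentially the paper's proof. In (1) you dominate the two compactifications by the closure of the diagonal image and compare them using the characterization of \(\varphi_j\) by its restriction to the open set. (The adjunction should read \(\operatorname{Hom}(j_{1,!}K,Rj_{1,*}K)\simeq\operatorname{Hom}(K,j_1^*Rj_{1,*}K)\), a harmless slip.) In (2) you combine transitivity of \(\varphi\) for a composite of open immersions with proper base change over the open \(Y\).

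The only difference is the order of choices in (2). You compactify \(j_Y\circ p\) and take the preimage of \(Y\) as the compactification of \(p\), so the cartesian property holds by construction. The paper instead first fixes a compactification of \(p\), then compactifies its composite with the open immersion of \(Y\). It must then verify that the preimage of \(Y\) is exactly the given compactification of \(X\), using properness over \(Y\) and density.
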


\begin{proof}
For an open immersion \(j\), the morphism \(\varphi_j(K)\) is characterized by
\(j^*\varphi_j(K)=\operatorname{id}_K\).  We use this characterization throughout.

(1)  Let \((j_i,\bar f_i)\), \(i=1,2\), be two compactifications of \(f\), and let \(\bar X_3\) be the
closure of the image of \((j_1,j_2)\colon X\to\bar X_1\times_Y\bar X_2\).  Write
\(j_3\colon X\to\bar X_3\) for the induced map and \(r_i\colon\bar X_3\to\bar X_i\) for the proper
projections.  Since \(\bar X_{3-i}\to Y\) is separated, the graph of
\(j_{3-i}\circ j_i^{-1}\) is closed over \(j_i(X)\), so
\(r_i^{-1}(j_i(X))=j_3(X)\).  Thus \(j_3\) is an open immersion with dense image, and \(\bar X_3\) is a
third compactification.

It remains to compare \((j',\bar f\circ q)\) with \((j,\bar f)\), where \(q\) is proper,
\(q\circ j'=j\), and \(q^{-1}(j(X))=j'(X)\).  Proper base change gives a canonical isomorphism
\[
        j_!K\xrightarrow{\ \sim\ }Rq_*j'_!K.
\]
Under this isomorphism and \(Rq_*Rj'_*=Rj_*\), the morphism \(Rq_*(\varphi_{j'}(K))\) pulls back along
\(j\) to \(\operatorname{id}_K\), and therefore equals \(\varphi_j(K)\).  Applying \(R\bar f_*\) proves
independence.  If \(f\) is proper, take \(j=\operatorname{id}\).  If \(f=j\) is an open immersion, the
transformation just constructed pulls back along \(j\) to the identity and hence equals \(\varphi_j\) by
the characterization at the start of the proof.

(2)  Choose compactifications \(p=\bar p\circ j\) and \(r=\bar r\circ j'\), and choose a
compactification
\[
        \bar X\xrightarrow{\ j''\ }\bar X''\xrightarrow{\ q\ }\bar Y'
\]
of \(j'\circ\bar p\colon\bar X\to\bar Y'\), with \(j''\) an open immersion with dense image and \(q\) proper
\cite[Tag~0ATU]{stacks-project}.  Since \(\bar X\) is proper over
\(Y\) and \(q^{-1}(j'(Y))\) is separated over \(Y\), the induced morphism
\(\bar X\to q^{-1}(j'(Y))\) is proper.  Its image \(j''(\bar X)\) is therefore closed.  It is also dense
because \(j''(\bar X)\) is dense in \(\bar X''\).  Thus \(q^{-1}(j'(Y))=j''(\bar X)\).  Then
\(rp=(\bar r\circ q)\circ(j''\circ j)\) is a compactification of \(rp\).  The characterization above gives
\[
        \varphi_{j''j}(K)=\varphi_{j''}(Rj_*K)\circ j''_!(\varphi_j(K)).
\]
For \(M\in\Db(\bar X)\), proper base change and the equality
\(q^{-1}(j'(Y))=j''(\bar X)\) give a natural isomorphism
\[
        j'_!R\bar p_*M\xrightarrow{\ \sim\ }Rq_*j''_!M.
\]
Under this isomorphism and \(Rq_*Rj''_*=Rj'_*R\bar p_*\), the characterization of \(\varphi_{j'}\) gives
\[
        Rq_*(\varphi_{j''}(M))=\varphi_{j'}(R\bar p_*M).
\]
Apply \(R(\bar r\circ q)_*\) to the first identity and use the naturality of the intervening isomorphism.  The
composition isomorphisms of \cite[Exp.~XVII, \S5.1]{SGA4} then give
\[
        \varphi_{rp}(K)
        =R\bar r_*\bigl[Rq_*\varphi_{j''}(Rj_*K)\bigr]
         \circ R\bar r_*\bigl[Rq_*j''_!(\varphi_j(K))\bigr]
        =\varphi_r(Rp_*K)\circ Rr_!(\varphi_p(K)).
\]
\end{proof}

\begin{lem}
\label{lem:forget-supports-products}
\begin{enumerate}
\item For \(f:X\to S\), \(g:Y\to T\), \(K\in\Db(X)\), and \(K'\in\Db(Y)\), one has
\[
        \varphi_{f\times g}(K\boxtimes K')\circ\kappa_!
        =\kappa_*\circ\bigl(\varphi_f(K)\boxtimes\varphi_g(K')\bigr).
\]
\item Let \(p:X\to Y\), let \(\cL\) be a lisse \(\ol\Ql\)-sheaf on \(Y\), and let
\(K\in\Db(X,\ol\Ql)\).  The natural morphism
\[
 \pi_*\colon Rp_*K\otimes\cL\xrightarrow{\ \sim\ }Rp_*(K\otimes p^*\cL)
\]
is an isomorphism \cite[Prop.~A.1.5(i)]{GabberLoeser1996}.  Under this isomorphism and the
projection-formula identification for \(Rp_!\), the morphism \(\varphi_p(K\otimes p^*\cL)\) identifies
with \(\varphi_p(K)\otimes\operatorname{id}_{\cL}\).
\end{enumerate}
\end{lem}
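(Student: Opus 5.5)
The plan is to reduce both statements to the characterization of \(\varphi\) on open immersions and to the fact that proper pushforward commutes with external products and with tensoring by a lisse sheaf.

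For (1), choose compactifications \(f=\bar f\circ j\) and \(g=\bar g\circ j'\). Then \(f\times g=(\bar f\times\bar g)\circ(j\times j')\) is a compactification of \(f\times g\), because \(j\times j'\) is an open immersion and \(\bar f\times\bar g\) is proper. By Lemma~\ref{lem:forget-supports}(1), \(\varphi_{f\times g}=R(\bar f\times\bar g)_*\varphi_{j\times j'}\). The Künneth maps for \(f\times g\) factor as the Künneth map for the proper morphism \(\bar f\times\bar g\) composed with the one for \(j\times j'\). For \(j\times j'\) the relevant objects are \(j_!K\boxtimes j'_!K'\simeq(j\times j')_!(K\boxtimes K')\) and the adjoint map \(Rj_*K\boxtimes Rj'_*K'\to R(j\times j')_*(K\boxtimes K')\).

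The key step is to verify that
\[
\varphi_{j\times j'}(K\boxtimes K')\circ\kappa_!=\kappa_*\circ(\varphi_j(K)\boxtimes\varphi_{j'}(K')).
\]
Both sides are morphisms from \((j\times j')_!(K\boxtimes K')\) to \(R(j\times j')_*(K\boxtimes K')\), and such morphisms correspond by adjunction to endomorphisms of \(K\boxtimes K'\) after restriction along \(j\times j'\). So it suffices to check that both restrict to the identity. The left side restricts to the identity by the characterization of \(\varphi_{j\times j'}\). On the right, \((j\times j')^*\kappa_*\) is the identity, since the counits restrict to the identity. Also \((j\times j')^*(\varphi_j\boxtimes\varphi_{j'})=j^*\varphi_j\boxtimes j'^*\varphi_{j'}=\operatorname{id}\).

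It remains to push this identity forward by \(R(\bar f\times\bar g)_*\). For this one needs the compatibility of \(\kappa_*\) for proper maps with the Künneth isomorphism, which is the naturality of the SGA4 Künneth formula. Tracking this naturality is the bookkeeping step most likely to hide errors.

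For (2), the isomorphism \(\pi_*\) is local on \(Y\). Since \(\cL\) is lisse, after passing to an étale cover on which \(\cL\) is trivial mod \(\ell^n\), the map is the identity on a finite direct sum of copies. Then pass to the limit and invert \(\ell\). Alternatively, use \(Rp_*R\mathcal{H}om(p^*\cL^\vee,K)\simeq R\mathcal{H}om(\cL^\vee,Rp_*K)\) together with dualizability of \(\cL\); I would write out this second route.

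For the compatibility, take a compactification \(p=\bar p\circ j\) and write \(\bar\cL=\bar p^*\cL\). Then:
\[
j_!(K\otimes p^*\cL)\simeq j_!K\otimes\bar\cL,\qquad Rj_*(K\otimes p^*\cL)\simeq Rj_*K\otimes\bar\cL,
\]
the second again because \(\bar\cL\) is lisse. Under these identifications, \(\varphi_j(K\otimes p^*\cL)\) and \(\varphi_j(K)\otimes\operatorname{id}\) both restrict to the identity along \(j\), so they coincide by adjunction. Applying \(R\bar p_*\) and the proper projection formula \(R\bar p_*(-\otimes\bar p^*\cL)\simeq R\bar p_*(-)\otimes\cL\), which is natural, gives the claim. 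The main obstacle is checking that the composite of these identifications equals \(\pi_*\) and the \(!\)-projection formula as normalized in the statement. I would handle this by reducing both sides to adjunction units and counits.
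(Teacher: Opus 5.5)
Your proposal is correct and follows essentially the same route as the paper. For (1), the paper also uses the product compactification, checks $\varphi_{j\times j'}(K\boxtimes K')=c\circ(\varphi_j(K)\boxtimes\varphi_{j'}(K'))$ by restriction to the identity, and then applies $R(\bar f\times\bar g)_*$ with the proper K\"unneth formula; for (2), it likewise proves $\pi_*$ by the duality/Yoneda argument, compares $\varphi_j(K\otimes j^*\bar p^*\cL)$ with $\varphi_j(K)\otimes\operatorname{id}$ by restriction to the identity on a compactification $p=\bar p\circ j$, and concludes with the proper projection formula, treating the compatibility of the composite identifications as routine just as you do.
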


\begin{proof}
(1)  Compactify \(f=\bar f\circ j_X\) and \(g=\bar g\circ j_Y\).  Then
\(f\times g=(\bar f\times\bar g)\circ j\) with \(j=j_X\times j_Y\) an open immersion and
\(\bar f\times\bar g\) proper.  Under the canonical identification
\(j_!(K\boxtimes K')\simeq j_{X!}K\boxtimes j_{Y!}K'\), let
\(c\colon Rj_{X*}K\boxtimes Rj_{Y*}K'\to Rj_*(K\boxtimes K')\) be the morphism adjoint to the identity
after pullback by \(j\).  The characterization of the forget-supports transformation for an open
immersion gives
\[
 \varphi_j(K\boxtimes K')
 =c\circ\bigl(\varphi_{j_X}K\boxtimes\varphi_{j_Y}K'\bigr).
\]
Apply \(R(\bar f\times\bar g)_*\) and the K\"unneth isomorphism for the proper morphisms
\(\bar f,\bar g\) \cite[Exp.~XVII, Th.~5.4.3]{SGA4}.  The left side becomes
\(\varphi_{f\times g}(K\boxtimes K')\circ\kappa_!\), while \(c\) induces \(\kappa_*\).  This gives~(1).

(2)  We first prove the formula for \(Rp_*\).  Since \(\cL\) is lisse of finite rank, it is dualizable.
Write \(\cL^\vee\) for its dual.  For every \(M\in\Db(Y,\ol\Ql)\), duality for \(\cL\) and
\(p^*\cL\), together with the adjunction \(p^*\dashv Rp_*\), gives natural isomorphisms
\[
\begin{aligned}
 \operatorname{Hom}_Y(M,Rp_*K\otimes\cL)
 &\simeq \operatorname{Hom}_Y(M\otimes\cL^\vee,Rp_*K)\\
 &\simeq \operatorname{Hom}_X(p^*M\otimes p^*\cL^\vee,K)\\
 &\simeq \operatorname{Hom}_X(p^*M,K\otimes p^*\cL)\\
 &\simeq \operatorname{Hom}_Y(M,Rp_*(K\otimes p^*\cL)).
\end{aligned}
\]
By Yoneda, these give an isomorphism
\[
        Rp_*K\otimes\cL\xrightarrow{\ \sim\ }Rp_*(K\otimes p^*\cL).
\]
Its adjoint is the tensor product of the counit \(p^*Rp_*K\to K\) with
\(\operatorname{id}_{p^*\cL}\), so this is the usual natural morphism.

Choose a compactification \(p=\bar p\circ j\) in the definition of \(\varphi_p\), so that
\(\varphi_p=R\bar p_*(\varphi_j)\), and put \(\cM=\bar p^*\cL\).  The \(!\)-projection formula and the
formula for \(Rj_*\) just proved give
\[
 j_!K\otimes\cM\xrightarrow{\ \sim\ }j_!(K\otimes j^*\cM),
 \qquad
 Rj_*K\otimes\cM\xrightarrow{\ \sim\ }Rj_*(K\otimes j^*\cM).
\]
Under these isomorphisms, both \(\varphi_j(K\otimes j^*\cM)\) and
\(\varphi_j(K)\otimes\operatorname{id}_{\cM}\) pull back to the identity, and hence they agree.  Apply
\(R\bar p_*\) and the proper projection formula \cite[Exp.~XVII, Prop.~5.2.9]{SGA4},
\cite[Prop.~A.1.5(ii)]{GabberLoeser1996} to obtain~(2).
\end{proof}

The next elementary lemma will be applied to \(R\bar j_{x,*}\) in the proof of
Lemma~\ref{lem:exceptional}.

\begin{lem}
\label{lem:constant-tensor}
Let \(f\colon X\to Y\) be any morphism of schemes and let \(V\) be a finite-dimensional
\(\ol\Ql\)-vector space.  For \(Z=X,Y\), let \(V_Z\) be the constant sheaf on
\(Z_{\mathrm{pro\acute et}}\) associated with \(V\).  The tensor products below are derived tensor
products of \(\ol\Ql\)-sheaves.  For \(K\in D(X_{\mathrm{pro\acute et}},\ol\Ql)\), the natural morphism
\begin{equation}
\label{eq:constant-tensor}
        \theta_{K,V}\colon
        Rf_*K\otimes_{\ol\Ql}V_Y
        \longrightarrow
        Rf_*(K\otimes_{\ol\Ql}V_X)
\end{equation}
is an isomorphism, naturally in \(K\) and in \(V\).
\end{lem}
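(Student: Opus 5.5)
The plan is to reduce to the tensor-compatibility of \(Rf_*\) with a finite free module of rank one, using only additivity of \(Rf_*\).

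First, describe \(\theta_{K,V}\) explicitly. The morphism is adjoint to
\[
 f^*(Rf_*K\otimes V_Y)\simeq f^*Rf_*K\otimes V_X\longrightarrow K\otimes V_X,
\]
namely the counit tensored with \(\operatorname{id}_{V_X}\), using \(f^*V_Y=V_X\). This description makes \(\theta_{K,V}\) natural in \(K\). It is also natural in \(V\), because every map of vector spaces \(V\to V'\) induces compatible maps of constant sheaves on \(X\) and \(Y\).

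Second, choose a basis \(e_1,\dots,e_n\) of \(V\). This gives \(V_Z\simeq\ol\Ql^{\oplus n}\) on \(Z_{\mathrm{pro\acute et}}\), compatibly with \(f^*\). Constant sheaves of \(\ol\Ql\)-vector spaces are flat, so the derived tensor products are ordinary: \(K\otimes V_X\simeq K^{\oplus n}\) and \(Rf_*K\otimes V_Y\simeq (Rf_*K)^{\oplus n}\). Under these identifications, naturality of \(\theta\) in \(V\) with respect to the inclusions \(\ol\Ql e_i\hookrightarrow V\) and the projections \(V\to\ol\Ql e_i\) shows that \(\theta_{K,V}\) is the direct sum of the maps \(\theta_{K,\ol\Ql}\). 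Each \(\theta_{K,\ol\Ql}\) is the unit isomorphism \(Rf_*K\otimes\ol\Ql\simeq Rf_*K\), because the counit tensored with the unit object corresponds to itself. The remaining input is that \(Rf_*\) commutes with finite direct sums, which holds because it is an additive exact functor between triangulated categories; finite sums are biproducts. Together these show that \(\theta_{K,V}\) is an isomorphism.

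The only point needing care is to check that the basis-dependent identifications match \(\theta\). I will do this by writing \(\theta_{K,V}\) through naturality in \(V\), applied to the split inclusions and projections, rather than by computing with bases. No finiteness assumption on \(f\) is needed, since only additivity of \(Rf_*\) on the pro-étale derived category is used.
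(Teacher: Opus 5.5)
Your proposal is correct and follows the paper's proof. Both describe \(\theta_{K,V}\) as adjoint to the counit tensored with \(\operatorname{id}_{V_X}\), choose a basis of \(V\) to reduce to the canonical map \((Rf_*K)^{\oplus\dim V}\to Rf_*(K^{\oplus\dim V})\), and conclude by additivity of \(Rf_*\); your use of naturality in \(V\) along the split inclusions and projections makes explicit how the basis identifications match \(\theta_{K,V}\), which the paper leaves implicit.
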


\begin{proof}
The morphism is adjoint to
\[
 f^*(Rf_*K\otimes_{\ol\Ql}V_Y)
 \simeq f^*Rf_*K\otimes_{\ol\Ql}V_X
 \longrightarrow K\otimes_{\ol\Ql}V_X,
\]
where the last arrow is induced by the counit \(f^*Rf_*K\to K\).  After choosing a basis of \(V\),
\eqref{eq:constant-tensor} becomes the canonical morphism
\[
        (Rf_*K)^{\oplus\dim V}\longrightarrow Rf_*(K^{\oplus\dim V}),
\]
which is an isomorphism because \(Rf_*\) is additive.  Naturality follows from the adjoint
construction.
\end{proof}

\section{Continuous cohomology of inertia groups}
\label{app:boundary}

This appendix computes continuous cohomology for profinite groups whose finite quotients have order
prime to \(\ell\) and for \(\mathbb Z_\ell\).  The results are used in the proof of
Lemma~\ref{lem:inertia-eigenvalue}.

\begin{lem}
\label{lem:prime-to-ell-vanishing}
Let \(H\) be a profinite group all of whose finite quotients have order prime to \(\ell\), and let \(M\)
be a finite-dimensional continuous \(\ol\Ql\)-representation of \(H\).  Then continuous cohomology is
concentrated in degree \(0\).
\[
        R\Gamma_{\mathrm{cts}}(H,M)\simeq M^H,
        \qquad
        H^a_{\mathrm{cts}}(H,M)=0\quad(a>0).
\]
The same conclusion holds when \(M\) is a finite discrete \(H\)-module of \(\ell\)-power order.
\end{lem}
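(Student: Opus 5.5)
The plan is to treat the torsion case first and then pass to \(\ol\Ql\)-coefficients through the definition of continuous cohomology recalled in \S\ref{sec:sheaves}.

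\emph{Torsion coefficients.} Let \(M\) be a finite discrete \(H\)-module of \(\ell\)-power order. Continuous cochain cohomology of a profinite group with discrete coefficients is the colimit of the cohomology of its finite quotients:
\[
H^a_{\mathrm{cts}}(H,M)=\varinjlim_U H^a(H/U,M^U),
\]
the colimit running over open normal subgroups \(U\) acting trivially on \(M\). Each \(H/U\) has order prime to \(\ell\). Hence multiplication by \(|H/U|\) is invertible on \(M^U\), and it also kills \(H^a(H/U,M^U)\) for \(a>0\), by restriction–corestriction to the trivial subgroup. So every term vanishes in positive degree, and the colimit does too. In degree zero we get \(M^H\).

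\emph{\(\ol\Ql\)-coefficients.} Choose an \(E\)-model \(V\) with a stable \(\mathcal O_E\)-lattice \(\Lambda\), as in \S\ref{sec:sheaves}. Then
\[
R\Gamma_{\mathrm{cts}}(H,V)=\bigl(R\varprojlim_n R\Gamma_{\mathrm{cts}}(H,\Lambda/\varpi^n)\bigr)\otimes E.
\]
By the torsion case, each \(R\Gamma_{\mathrm{cts}}(H,\Lambda/\varpi^n)\) is \((\Lambda/\varpi^n)^H\) placed in degree \(0\). So the limit is computed by \(R\varprojlim\) of the inverse system \(\bigl((\Lambda/\varpi^n)^H\bigr)_n\).

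The key point, and the one step that needs care, is controlling \(R^1\varprojlim\) of this system. I would use averaging. For a finite quotient \(H/U\) through which the action on \(\Lambda/\varpi^n\) factors, the operator \(e=|H/U|^{-1}\sum_{g\in H/U}g\) is well defined because \(|H/U|\) is a unit in \(\mathcal O_E\). It is an idempotent projecting onto the invariants. Thus invariants are exact on \(\ell\)-power torsion modules. In particular \((\Lambda/\varpi^{n+1})^H\to(\Lambda/\varpi^n)^H\) is surjective, being the image of a surjection under an exact functor.

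The system is therefore Mittag-Leffler, so \(R^1\varprojlim\) vanishes. The limit is \(\varprojlim(\Lambda/\varpi^n)^H=\Lambda^H\), since invariants commute with limits. After inverting \(\varpi\) this gives \(V^H\) in degree \(0\). Finally, extend scalars to \(\ol\Ql\); invariants commute with this flat base change.
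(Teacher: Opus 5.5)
Your proposal is correct and takes essentially the same route as the paper. It proves vanishing at each level $\Lambda/\varpi^n\Lambda$ because the finite quotients of $H$ have order invertible there, uses the norm idempotent to get surjective transition maps on invariants so that $\varprojlim^1$ vanishes, and then inverts $\varpi$ and extends scalars. The differences are only cosmetic: you kill the finite-level cohomology by restriction--corestriction rather than by exactness of invariants, and you phrase the limit step through $R\varprojlim$ and Mittag-Leffler rather than through Jannsen's exact sequence and $H^1_{\mathrm{cts}}(H,K_n)=0$.
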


\begin{proof}
Choose a finite extension \(E/\mathbb Q_\ell\) over which \(M\) is defined
(\S\ref{sec:sheaves}), a uniformizer \(\varpi\) of \(E\), and an \(H\)-stable
\(\mathcal O_E\)-lattice \(\Lambda\subset M\).  Continuous cochains with
values in \(\Lambda/\varpi^n\Lambda\) factor through finite quotients \(Q\) of \(H\).  The order of every
such \(Q\) is prime to \(\ell\), hence is invertible on \(\Lambda/\varpi^n\Lambda\), and the norm
idempotent
\[
        e_Q=\frac{1}{\lvert Q\rvert}\sum_{g\in Q}g
\]
projects onto \(Q\)-invariants.  Thus \(Q\)-invariants are exact on these modules.  Passing over the
finite quotients gives
\[
        H^a_{\mathrm{cts}}(H,\Lambda/\varpi^n\Lambda)=0
        \qquad(a>0).
\]
Let
\[
 K_n=\ker\bigl(\Lambda/\varpi^{n+1}\Lambda\longrightarrow\Lambda/\varpi^n\Lambda\bigr).
\]
The same argument gives \(H^1_{\mathrm{cts}}(H,K_n)=0\).  The long exact sequence therefore shows
that the transition maps on
\(H^0_{\mathrm{cts}}(H,\Lambda/\varpi^n\Lambda)=(\Lambda/\varpi^n\Lambda)^H\) are surjective.  In
Jannsen's exact sequence
\cite[(2.1)]{Jannsen88}
\[
        0\to{\varprojlim_n}^{1}\,H^{a-1}_{\mathrm{cts}}(H,\Lambda/\varpi^n\Lambda)
        \to H^a_{\mathrm{cts}}(H,\Lambda)
        \to\varprojlim_n H^a_{\mathrm{cts}}(H,\Lambda/\varpi^n\Lambda)\to0,
\]
the right term vanishes for \(a>0\).  If \(a>1\), the left term vanishes by the same cohomology
vanishing.  If \(a=1\), it is
\({\varprojlim_n}^{1}(\Lambda/\varpi^n\Lambda)^H\), which vanishes because the transition maps are
surjective.  Extending scalars to \(\ol\Ql\) proves the assertion.  For a finite discrete module \(M\)
of \(\ell\)-power order, continuous cochains factor through finite quotients of \(H\), and the norm
idempotent gives the same conclusion.
\end{proof}

\begin{lem}
\label{lem:zell-cohomology}
Let \(\Gamma\simeq\mathbb Z_\ell\) with topological generator \(\gamma\), and let \(M\) be a
finite-dimensional continuous \(\ol\Ql\)-representation of \(\Gamma\).  Then
\[
        R\Gamma_{\mathrm{cts}}(\Gamma,M)
        \simeq
        \bigl[M\xrightarrow{\gamma-1}M\bigr],
\]
where the first copy of \(M\) in the two-term complex on the right is in degree \(0\).
The same description holds when \(M\) is instead a finite discrete \(\Gamma\)-module of \(\ell\)-power
order.
\end{lem}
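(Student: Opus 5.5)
The plan is to compute continuous cohomology of \(\Gamma\simeq\mathbb Z_\ell\) first with finite coefficients, using the standard free resolution, and then pass to the limit through Jannsen's definition recalled in \S\ref{sec:sheaves}.

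\emph{Finite modules.} Let \(M\) be a finite discrete \(\Gamma\)-module of \(\ell\)-power order. Continuous cochains factor through the finite quotients \(\Gamma/\ell^s\Gamma\simeq\mathbb Z/\ell^s\), and the action on \(M\) factors through such a quotient for all large \(s\). So continuous cohomology is the colimit of the cohomologies of these cyclic groups under inflation. For a cyclic group with generator \(\gamma\), use the periodic resolution: \(H^0=\ker(\gamma-1)\), the odd-degree groups are \(\ker N_s/(\gamma-1)M\), and the even-degree groups in positive degree are \(M^{\gamma}/N_sM\), where \(N_s=\sum_{i<\ell^s}\gamma^i\).

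Inflation from \(\mathbb Z/\ell^s\) to \(\mathbb Z/\ell^{s+t}\) acts on \(H^1\) as follows. Once \(\gamma^{\ell^s}\) acts trivially on \(M\), the norm \(N_s\) annihilates \(M\) for \(s\) large, because \(N_{s+t}=\ell^t N_s\) on \(M\). Hence the colimit of \(H^1\) is \(M/(\gamma-1)M\). Inflation in degree \(2\) is multiplication by \(\ell^t\) on \(M^\gamma/N_sM\), so the colimit of \(H^2\) vanishes. By periodicity the same argument kills all higher degrees. Thus the continuous cohomology is \(\ker(\gamma-1)\) in degree \(0\), \(\operatorname{coker}(\gamma-1)\) in degree \(1\), and zero elsewhere.

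To upgrade this to an isomorphism in the derived category, exhibit a natural map from the two-term complex \([M\xrightarrow{\gamma-1}M]\) to continuous cochains. Send \(m\) in degree \(1\) to the crossed homomorphism \(\gamma^a\mapsto(1+\gamma+\dots+\gamma^{a-1})m\). This is well defined and continuous for \(a\in\mathbb Z\), and it extends to \(\mathbb Z_\ell\) because \(M\) is finite and \(\ell\)-primary, so these sums are periodic modulo some \(\ell^s\). The map induces the computed isomorphisms on cohomology, hence is a quasi-isomorphism.

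\emph{\(\ell\)-adic modules.} Choose \(E\), a uniformizer \(\varpi\) and a \(\Gamma\)-stable lattice \(\Lambda\) as in \S\ref{sec:sheaves}. The quasi-isomorphisms \([\Lambda/\varpi^n\xrightarrow{\gamma-1}\Lambda/\varpi^n]\to R\Gamma_{\mathrm{cts}}(\Gamma,\Lambda/\varpi^n)\) are natural in \(n\), and the transition maps of the two-term complexes are surjective. Therefore \(R\varprojlim_n\) of the two-term complexes is \([\Lambda\xrightarrow{\gamma-1}\Lambda]\). Applying \(\otimes_{\mathcal O_E}E\) and then extending scalars to \(\ol\Ql\) gives the claim.

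The main obstacle is making the comparison map genuinely a morphism of inverse systems of complexes, rather than only matching cohomology objectwise. The explicit cocycle formula is chosen to settle this, since it is visibly compatible with reduction modulo \(\varpi^n\).
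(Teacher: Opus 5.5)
Your proposal is correct. Its second half, where termwise surjective transition maps let \(R\varprojlim_n\) be computed termwise, followed by \(\otimes_{\mathcal O_E}E\) and extension of scalars to \(\ol\Ql\), is exactly the paper's limit argument. The two proofs differ only at finite level.

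The paper does not pass through finite quotients. It uses the single free resolution \(0\to(\mathcal O_E/\varpi^n)[[\Gamma]]\xrightarrow{\gamma-1}(\mathcal O_E/\varpi^n)[[\Gamma]]\to\mathcal O_E/\varpi^n\to0\) of the trivial module over the completed group algebra \((\mathcal O_E/\varpi^n)[[T]]\). It then applies continuous homomorphisms into \(\Lambda_n\) and cites \cite[Prop.~1.7.7]{NSW2008} to compare the result with continuous cochains. Since that resolution does not depend on the coefficients, compatibility in \(n\) is automatic and the argument is short. The price is that the identification of continuous cohomology with Ext over the Iwasawa algebra is taken from the reference.

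You instead write the continuous cohomology as the colimit of the cohomology of the cyclic quotients \(\mathbb Z/\ell^s\). You use the \(\ell\)-primary torsion to see that inflation is an inclusion in degree \(1\) and kills everything in the colimit in degrees \(\ge2\). You then produce the comparison by the explicit crossed homomorphism \(f_m\), which is visibly natural in the module and so gives a genuine morphism of inverse systems of complexes. This route is more elementary and self-contained. It settles the naturality issue you flagged at the level of cochains rather than through a coefficient-independent resolution.

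Two steps deserve a line each when you write it out:
\begin{itemize}
\item The phrase ``by periodicity'' is best justified by noting that the periodicity isomorphisms are cup product with the fundamental class in \(H^2(\mathbb Z/\ell^s,\mathbb Z)\). That class inflates to \(\ell^t\) times the fundamental class of \(\mathbb Z/\ell^{s+t}\), so in degrees \(2k\) and \(2k+1\) with \(k\ge1\) the inflation maps acquire a factor \(\ell^{tk}\) and the colimit vanishes.
\item The cochain \(f_m\) should first be defined on \(\gamma^a\) for \(a\ge0\) only. Then extend it to \(\Gamma\) by the periodicity you noted together with continuity, and obtain the cocycle identity on all of \(\Gamma\) by density.
\end{itemize}
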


\begin{proof}
Choose an \(E\)-model \(M_E\) of \(M\), for a finite extension \(E/\mathbb Q_\ell\) with
uniformizer \(\varpi\), and a \(\Gamma\)-stable \(\mathcal O_E\)-lattice \(\Lambda\subset M_E\)
(\S\ref{sec:sheaves}).  Put \(\Lambda_n=\Lambda/\varpi^n\Lambda\).  For every \(n\), the completed
group algebra satisfies
\[
        (\mathcal O_E/\varpi^n)[[\Gamma]]\simeq(\mathcal O_E/\varpi^n)[[T]],
        \qquad \gamma\longmapsto1+T,
\]
and the augmentation identifies the trivial module \(\mathcal O_E/\varpi^n\) with the quotient by
\(T=\gamma-1\).  Multiplication by \(T\) is injective in a formal power-series ring, so
\[
 0\longrightarrow(\mathcal O_E/\varpi^n)[[\Gamma]]
 \xrightarrow{\ \gamma-1\ }(\mathcal O_E/\varpi^n)[[\Gamma]]
 \xrightarrow{\ \mathrm{aug}\ }(\mathcal O_E/\varpi^n)\longrightarrow0
\]
is a free resolution of the trivial module.  Applying continuous module homomorphisms into
\(\Lambda_n\) gives, functorially and compatibly in \(n\),
\[
        R\Gamma_{\mathrm{cts}}(\Gamma,\Lambda_n)
        \simeq
        \bigl[\Lambda_n\xrightarrow{\gamma-1}\Lambda_n\bigr].
\]
For the identification of the cohomology of this two-term complex with continuous cohomology at each
finite level see \cite[Prop.~1.7.7]{NSW2008}.
The transition maps of these two-term complexes are termwise surjective, so the derived inverse
limit is the termwise inverse limit, and the definition in \S\ref{sec:sheaves} gives
\[
        R\Gamma_{\mathrm{cts}}(\Gamma,M_E)
        \simeq
        \bigl[M_E\xrightarrow{\gamma-1}M_E\bigr].
\]
Extending scalars to \(\ol\Ql\) proves the assertion.  For a finite discrete \(\Gamma\)-module \(A\)
of \(\ell\)-power order, choose \(n\) with
\(\ell^nA=0\).  The same free resolution over \((\mathbb Z/\ell^n)[[\Gamma]]\) applies directly.
\end{proof}

\bibliographystyle{alpha-custom}
\bibliography{bib/references}

\end{document}